\newtheorem{theorem}{Theorem}
\newtheorem{corollary}[theorem]{Corollary}
\newcommand{\X}{{\mathcal{X}}}
\newcommand{\cU}{{\mathcal{U}}}
\newcommand{\C}{{\mathcal{C}}}
\newcommand{\tr}{\intercal}
\title{Optimal Scenario Reduction for One- and Two-Stage Robust Optimization}
\author{Marc Goerigk}
\author{Mohammad Khosravi\footnote{Corresponding author. Email: mohammad.khosravi@uni-siegen.de}}
\affil{Network and Data Science Management, University of Siegen,\authorcr Unteres Schlo{\ss} 3, 57072 Siegen, Germany}
\date{}
\begin{document}

\maketitle

\begin{abstract}
Robust optimization typically follows a worst-case perspective, where a single scenario may determine the objective value of a given solution. Accordingly, it is a challenging task to reduce the size of an uncertainty set without changing the resulting objective value too much. On the other hand, robust optimization problems with many scenarios tend to be hard to solve, in particular for two-stage problems. Hence, a reduced uncertainty set may be central to find solutions in reasonable time. We propose scenario reduction methods that give guarantees on the performance of the resulting robust solution. Scenario reduction problems for one- and two-stage robust optimization are framed as optimization problems that only depend on the uncertainty set and not on the underlying decision making problem. Experimental results indicate that objective values for the reduced uncertainty sets are closely correlated to original objective values, resulting in better solutions than when using general-purpose clustering methods such as K-means.
\end{abstract}

\textbf{Keywords: } robust optimization; scenario reduction; clustering; data-driven optimization; approximation algorithms

\noindent\textbf{Acknowledgements:} Supported by the Deutsche Forschungsgemeinschaft (DFG) through grant GO 2069/1-1.
	
\section{Introduction}\label{introduction}

Most real-world decision making problems are affected by uncertainty. Depending on the available knowledge and decision maker preferences, several methods exist to include this uncertainty already in the optimization stage, including robust optimization \citep{ben2009robust} and stochastic optimization \citep{powell2019unified}. The complexity to solve these problems depends on the model that is used to describe possible outcomes. A natural choice is to list possible scenarios, in what is known as a discrete uncertainty set in the area of robust optimization. For example, we may have a list of observations available, which parameter values were attained in the past.

Unfortunately, discrete robust optimization problems tend to be hard to solve for discrete uncertainty sets \citep{kasperski2016robust}. This challenge becomes even greater if we allow multiple stages of decision making, potentially with discrete recourse decisions \citep{yanikouglu2019survey}. To improve the solvability of such problems, it would be of benefit to have a method available that reduces the size of the uncertainty set. As robust optimization problems typically consider a worst-case criterion, removing even a single scenario can have a significant impact on the objective value of solutions and thus the choice of an optimal robust solution as well. To the best of our knowledge, no current method offers a principled reduction of the scenario size for robust optimization. \cite{chassein2018scenario} calculate an approximation guarantee that is based on a reduction of the uncertainty set, but any clustering gives the same guarantee -- only their size is relevant. \cite{goerigk2019representative} introduce a method to represent a discrete uncertainty by using a single scenario.

In stochastic optimization, on the other hand, where the availability of a probability distribution allows us to better estimate the impact of scenario reductions, there already exists a tradition to study this problem systematically (see, e.g., \cite{dupavcova2003scenario} and \cite{heitsch2003scenario}). Recent examples include \cite{keutchayan2021problem}, where distance between scenarios is measured by their objective value; \cite{hewitt2021decision}, where the opportunity costs of predicting the wrong scenario is considered; or \cite{bertsimas2022optimization}, where a generalization of the Wasserstein distance called problem-dependent divergence is used to reduce the uncertainty set. Some such approaches, such as \cite{fairbrother2019problem}, even assume that we can sample additional scenarios from the probability distribution. There also exist approaches that cover distributionally robust optimization (see \cite{rahimian2019identifying}), where we would like to protect against the worst-case distribution from an ambiguity set. 

To the best of our knowledge, none of these approaches are tailored towards the specifics of robust optimization, where no probability distribution exists and a worst-case perspective in the problem parameters is used. We propose a method that is suitable for robust optimization. Our approach is to cluster scenarios in a way that optimizes a guarantee how much worse the resulting robust solution can perform on the original uncertainty set compared to an optimal robust solution for this original set. We thus provide instance-dependent approximation guarantees, which are based solely on the uncertainty set, not on the structure of the underlying decision making problem.

This means that our results are also related to current research in the area of approximation algorithms, which is particularly relevant for discrete robust problems. Most robust problems with discrete uncertainty are NP-hard, even for only two scenarios (see, for example, \cite{averbakh2001complexity}, where hardness of the selection problem is discussed). Many problems, including shortest path, minimum spanning tree and knapsack, allow the existence of an FPTAS if the number of scenarios is constant \citep{aissi2010general}. However, they are usually strongly NP-hard if the number of scenarios is unbounded. The selection problem, as an example, cannot be approximated within a constant factor in this case \citep{kasperski2013approximating}. A general-purpose approach for all one-stage robust problems is to optimize with respect to the average scenario, which gives an $N$-approximation, where $N$ is the number of scenarios \citep{aissi2009min}. This does not apply to two-stage problems, however, where approximating the problem becomes even harder. \cite{kasperski2017robust} show that in case of the selection problem, no approximation algorithm better than $\log n$ can exist. No FPTAS seems to be known for any two-stage problem (see \cite{kasperski2016robust}). In this paper, we circumvent such hardness results, as we give guarantees that are specific to the problem at hand, rather than guarantees that hold for any problem. However, our guarantees always apply to the exact robust solution to the reduced problem, which is usually still NP-hard to find. If a polynomial-time algorithm is needed, then the reduced problem would still be needed to be solved with a polynomial-time heuristic.

Our contributions are as follows. In Section~\ref{sec:onestage}, we develop a framework for one-stage robust optimization to calculate an approximation guarantee for any reduced uncertainty set. Using this measure, we derive an optimization problem to find a scenario reduction that gives the smallest possible approximation guarantee. Several approaches to solve this model are proposed, including an iterative method that is similar to the K-means algorithm. An advantage of this heuristic is that each iteration can be done in polynomial time, while the scenario reduction problem is shown to be NP-hard. We also give an upper bound on the worst possible guarantee this approach may give. We then turn to two-stage robust problems in Section~\ref{sec:twostage}. We show how the framework proposed for one-stage problems needs to be modified such that the reduced uncertainty set still yields an approximation guarantee. While the guarantee cannot be better than for one-stage problems, the additional restrictions mean that the reduction problem can be solved more efficiently, though it remains NP-hard. In computational experiments (see Section~\ref{sec:experiments}) we compare our clustering approaches with the popular K-means method. We show that the objective values with respect to our reduced uncertainty sets reach a better correlation to the original robust objective values. This stronger correlation also results in better robust solutions, which is tested using selection and vertex cover problems. We conclude our paper and point to further research questions in Section~\ref{sec:conclusions}.

\section{Optimal Clustering for One-Stage Robust Optimization}
\label{sec:onestage}

We write vectors in bold and use the notation $[n]$ to denote sets $\{1,\ldots,n\}$. We consider linear optimization problems over some set of feasible solutions $\X\subseteq\mathbb{R}^n_+$. In particular, if the cost vector $\pmb{c}\in\mathbb{R}^n_+$ is known, then the so-called nominal problem is to solve
\[ \min_{\pmb{x}\in\X} \pmb{c}^\tr \pmb{x} \]
To formulate the robust counterpart to this problem, we assume that an uncertainty set $\cU\subseteq\mathbb{R}^n_+$ can be identified that contains all possible cost vectors that we would like to protect against. The (one-stage) robust optimization problem is then to find some $\pmb{x}\in\X$ that optimizes the worst-case objective over $\cU$, i.e., to solve
\[ \min_{\pmb{x}\in\X} \max_{\pmb{c}\in\cU} \pmb{c}^\tr \pmb{x} \]
Throughout this paper, we assume that the uncertainty set consists of a list of $N$ explicitly listed scenarios, that is, we assume $\cU=\{\pmb{c}^1,\ldots,\pmb{c}^N\}$.

We first recall a previous approach from \cite{goerigk2019representative} to represent an uncertainty set $\cU$ through a single scenario that is constructed with the help of a linear program. It is based on the following observation: Let $\hat{\pmb{c}}\in \text{conv}(\cU)$ be a scenario with the property that $\pmb{c}^i \le t\hat{\pmb{c}}$ for all $i\in[N]$. Let $\hat{\pmb{x}}$ be an optimizer with respect to $\hat{\pmb{c}}$, and let $\pmb{x}^*$ be an optimal solution with respect to the original robust problem with $N$ scenarios. Then it holds that
\[ \max_{\pmb{c}\in\cU} \pmb{c}^\tr\hat{\pmb{x}} \le t\cdot\hat{\pmb{c}}^\tr\hat{\pmb{x}} \le t\cdot\hat{\pmb{c}}^\tr\pmb{x}^* \le t\cdot\max_{\pmb{c}\in\text{conv}(\cU)} \pmb{c}^\tr\pmb{x}^* = t\cdot\max_{\pmb{c}\in\cU} \pmb{c}^\tr\pmb{x}^* \]
hence, solving with respect to $\hat{\pmb{c}}$ gives a $t$-approximation.

We first extend this principle in the following way.

\begin{theorem}\label{th:onestage}
Let $\cU= \{\pmb{c}^1,\ldots,\pmb{c}^N\}\subseteq\mathbb{R}^n_+$, and let $\hat{\pmb{c}}\in\mathbb{R}^n_+$. Let $\alpha,\beta\ge0$ be such that
\begin{align*}
\forall i\in[N]:\  &\pmb{c}^i \le \alpha \hat{\pmb{c}} \\
\exists \pmb{c}'\in\text{conv}(\cU):\ &\hat{\pmb{c}} \le \beta \pmb{c}'
\end{align*}
Then, any optimizer with respect to $\hat{\pmb{c}}$ gives an $\alpha\beta$-approximation to the robust optimization problem with respect to $\cU$.
\end{theorem}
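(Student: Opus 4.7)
The plan is to mimic the four-step chain of inequalities from the single-scenario case recalled just before the theorem, but now splitting the single factor $t$ into two factors: one comes from the coordinate-wise upper bound $\pmb{c}^i\le\alpha\hat{\pmb{c}}$, and one comes from the lower bound $\hat{\pmb{c}}\le\beta\pmb{c}'$ for some $\pmb{c}'\in\mathrm{conv}(\cU)$. Throughout, the key structural fact I will use repeatedly is that every feasible solution is nonnegative ($\X\subseteq\mathbb{R}^n_+$), so any coordinate-wise inequality between cost vectors carries over to an inequality between inner products with any $\pmb{x}\in\X$.

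Let $\hat{\pmb{x}}\in\arg\min_{\pmb{x}\in\X}\hat{\pmb{c}}^\tr\pmb{x}$ and let $\pmb{x}^*$ be an optimal robust solution for $\cU$. First I would bound the worst-case cost of $\hat{\pmb{x}}$ on $\cU$ from above by $\alpha\,\hat{\pmb{c}}^\tr\hat{\pmb{x}}$: for any $i\in[N]$, the hypothesis $\pmb{c}^i\le\alpha\hat{\pmb{c}}$ and $\hat{\pmb{x}}\ge\pmb{0}$ give $(\pmb{c}^i)^\tr\hat{\pmb{x}}\le\alpha\hat{\pmb{c}}^\tr\hat{\pmb{x}}$, and taking the max over $i$ yields $\max_{\pmb{c}\in\cU}\pmb{c}^\tr\hat{\pmb{x}}\le\alpha\hat{\pmb{c}}^\tr\hat{\pmb{x}}$. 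Second, by optimality of $\hat{\pmb{x}}$ for $\hat{\pmb{c}}$, we have $\hat{\pmb{c}}^\tr\hat{\pmb{x}}\le\hat{\pmb{c}}^\tr\pmb{x}^*$.

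Third, I would use the second hypothesis: pick $\pmb{c}'\in\mathrm{conv}(\cU)$ with $\hat{\pmb{c}}\le\beta\pmb{c}'$. Since $\pmb{x}^*\ge\pmb{0}$, this gives $\hat{\pmb{c}}^\tr\pmb{x}^*\le\beta(\pmb{c}')^\tr\pmb{x}^*$. Finally, because $\pmb{c}'$ is a convex combination of the $\pmb{c}^i$, linearity of the inner product yields $(\pmb{c}')^\tr\pmb{x}^*\le\max_{\pmb{c}\in\mathrm{conv}(\cU)}\pmb{c}^\tr\pmb{x}^*=\max_{\pmb{c}\in\cU}\pmb{c}^\tr\pmb{x}^*$, where the last equality is the standard fact that a linear function over a polytope attains its maximum at an extreme point. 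Chaining these four inequalities gives
\[
\max_{\pmb{c}\in\cU}\pmb{c}^\tr\hat{\pmb{x}}\;\le\;\alpha\beta\,\max_{\pmb{c}\in\cU}\pmb{c}^\tr\pmb{x}^*,
\]
which is precisely the claimed $\alpha\beta$-approximation.

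I do not anticipate a genuine obstacle: the argument is essentially a two-factor generalization of the recalled inequality chain, and each inequality is routine once one notices that nonnegativity of $\X$ is what lets us lift coordinate-wise cost comparisons to inner-product comparisons. The only mildly delicate point worth flagging is the reduction from $\mathrm{conv}(\cU)$ back to $\cU$ in the last step, which is where the assumption $\pmb{c}'\in\mathrm{conv}(\cU)$ (rather than an arbitrary vector) is used.
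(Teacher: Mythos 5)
Your proof is correct and follows exactly the same chain of inequalities as the paper's proof, which simply writes $\max_{\pmb{c}\in\cU} \pmb{c}^\tr\hat{\pmb{x}} \le \alpha \hat{\pmb{c}}^\tr\hat{\pmb{x}} \le \alpha \hat{\pmb{c}}^\tr\pmb{x}^* \le \alpha\beta\, \pmb{c}'^\tr\pmb{x}^* \le \alpha\beta \max_{\pmb{c}\in\text{conv}(\cU)} \pmb{c}^\tr\pmb{x}^* = \alpha\beta \max_{\pmb{c}\in\cU} \pmb{c}^\tr\pmb{x}^*$ without spelling out the role of nonnegativity. Your additional remarks on where $\X\subseteq\mathbb{R}^n_+$ and $\pmb{c}'\in\text{conv}(\cU)$ are used are accurate and make the argument more explicit than the original.
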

\begin{proof}
Using the same notation as before, we have
\[ \max_{\pmb{c}\in\cU} \pmb{c}^\tr\hat{\pmb{x}} \le \alpha\cdot \hat{\pmb{c}}^\tr\hat{\pmb{x}}
\le \alpha \cdot \hat{\pmb{c}}^\tr\pmb{x}^* \le \alpha\beta\cdot \pmb{c}'^\tr\pmb{x}^* \le \alpha\beta\cdot\max_{\pmb{c}\in\text{conv}(\cU)} \pmb{c}^\tr\pmb{x}^* = \alpha\beta\cdot\max_{\pmb{c}\in\cU} \pmb{c}^\tr\pmb{x}^* \]
\end{proof}

This result allows us to calculate the approximation guarantee of any scenario $\hat{\pmb{c}}\in\mathbb{R}^n_+$, by calculating $\alpha$ as the worst-case ratio between $\pmb{c}^i$ and $\pmb{c}$ and by finding the smallest value for $\beta$ by solving a small linear program. In Figure~\ref{fig:2dex1}, we show the corresponding approximation guarantees in a small $(n=2)$-dimensional example with $N=2$ scenarios, given by $\cU=\{(4,2)^\tr,(2,3)^\tr\}$. Guarantees above 3 are truncated for better readability.
\begin{figure}[htb]
\begin{center}
\includegraphics[width=0.5\textwidth]{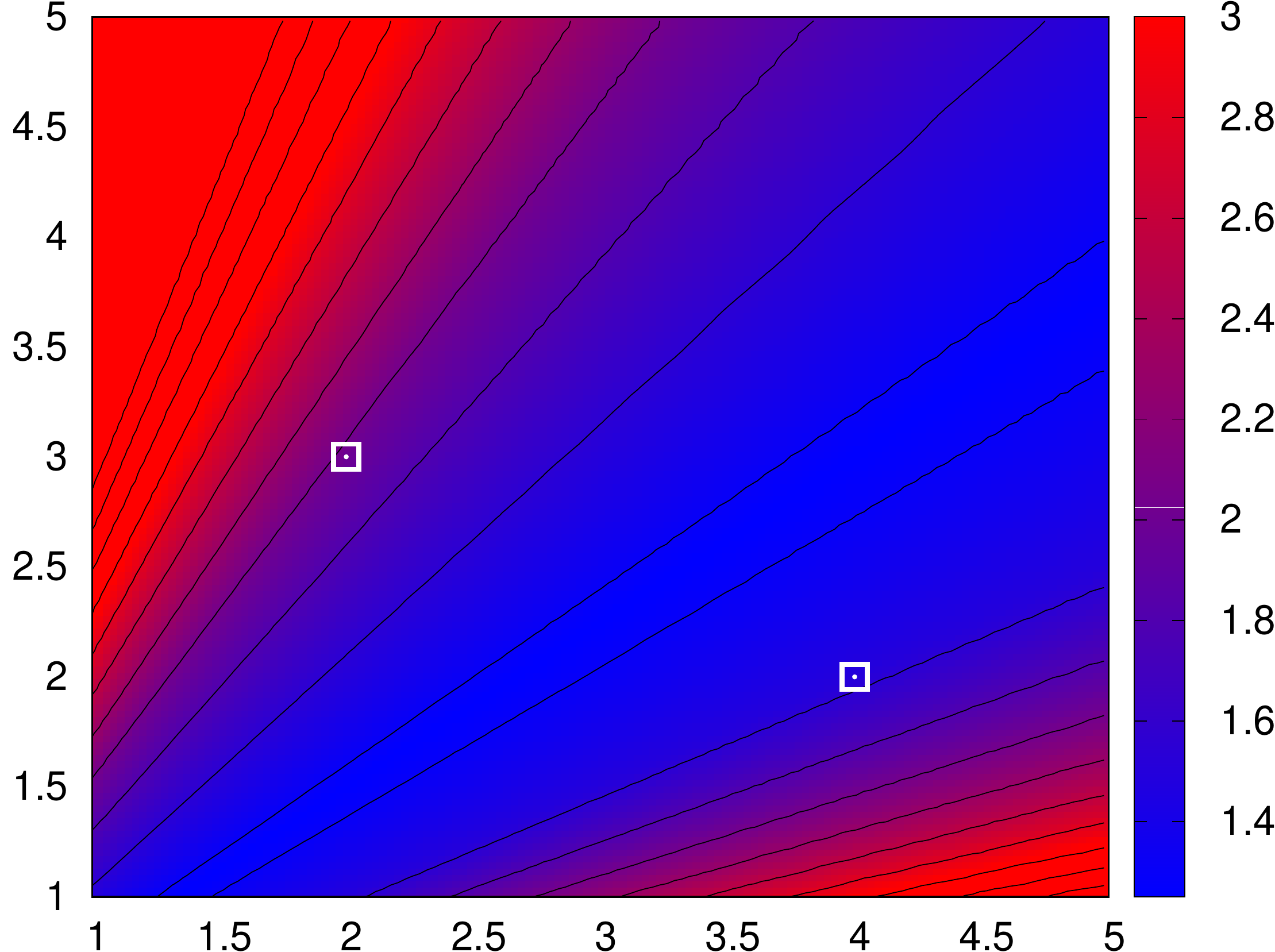}
\caption{Approximation guarantees for a one-stage example problem.}\label{fig:2dex1}
\end{center}
\end{figure}
An optimal approximation ratio of $1.25$ is attained, for example at $\hat{\pmb{c}}=(4,3)^\tr$. Note that all contour lines are straight. This is because any scaling $\lambda \hat{\pmb{x}}$ with $\lambda > 0$ of a point $\hat{\pmb{x}}$ results in the same approximation guarantee by adjusting $\alpha$ and $\beta$ accordingly. Hence, we may simply fix $\beta=1$, which recovers the setting from \cite{goerigk2019representative}. This observation will play a role when we extend our results to the two-stage setting.

We now consider the approximation guarantee when using multiple scenarios $\{\hat{\pmb{c}}^1,\ldots,\hat{\pmb{c}}^K\}=\mathcal{C}$. 
\begin{theorem}\label{th:appr1st}
Let $\cU= \{\pmb{c}^1,\ldots,\pmb{c}^N\}\subseteq\mathbb{R}^n_+$ and $\C = \{\hat{\pmb{c}}^1,\ldots,\hat{\pmb{c}}^K\} \subseteq\mathbb{R}^n_+$ be such that
\begin{align}
\forall i\in[N]\, \exists \hat{\pmb{c}} \in \text{conv}(\C):\ &\pmb{c}^i \le \alpha \hat{\pmb{c}} \label{a1}\\
\forall k\in[K]\, \exists \pmb{c}\in\text{conv}(\cU):\ &\hat{\pmb{c}}^k \le \beta \pmb{c} \label{a2}
\end{align}
for some $\alpha,\beta\ge 0$. Then, an optimal solution to the robust problem with respect to $\C$ gives an $\alpha\beta$-approximation to the robust problem with respect to $\cU$.
\end{theorem}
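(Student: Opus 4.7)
The plan is to mimic the chain of inequalities used in Theorem~\ref{th:onestage}, but now with the single representative scenario replaced by a convex combination on each side. Let $\hat{\pmb{x}}$ be an optimal solution to the robust problem with respect to $\C$ and $\pmb{x}^*$ an optimal solution to the robust problem with respect to $\cU$. The goal is to bound $\max_{\pmb{c}\in\cU} \pmb{c}^\tr \hat{\pmb{x}}$ by $\alpha\beta\cdot\max_{\pmb{c}\in\cU} \pmb{c}^\tr\pmb{x}^*$, and the proof splits naturally into three links: pass from $\cU$ to $\C$ evaluated at $\hat{\pmb{x}}$ using condition~\eqref{a1} (gaining a factor $\alpha$), use optimality of $\hat{\pmb{x}}$ on $\C$ to switch to $\pmb{x}^*$, and pass from $\C$ back to $\cU$ using condition~\eqref{a2} (gaining a factor $\beta$).

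For the first link, fix any $i\in[N]$ and pick $\hat{\pmb{c}}\in\text{conv}(\C)$ with $\pmb{c}^i\le\alpha\hat{\pmb{c}}$. Because $\hat{\pmb{x}}\in\X\subseteq\mathbb{R}^n_+$, I can take inner products and preserve the inequality, obtaining $\pmb{c}^{i\tr}\hat{\pmb{x}}\le\alpha\hat{\pmb{c}}^\tr\hat{\pmb{x}}$. Writing $\hat{\pmb{c}}=\sum_k \lambda_k \hat{\pmb{c}}^k$ as a convex combination, linearity yields $\hat{\pmb{c}}^\tr\hat{\pmb{x}}\le\max_{k\in[K]}\hat{\pmb{c}}^{k\tr}\hat{\pmb{x}}=\max_{\pmb{c}\in\C}\pmb{c}^\tr\hat{\pmb{x}}$; taking the maximum over $i$ gives $\max_{\pmb{c}\in\cU}\pmb{c}^\tr\hat{\pmb{x}}\le\alpha\max_{\pmb{c}\in\C}\pmb{c}^\tr\hat{\pmb{x}}$. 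The second link is immediate from the optimality of $\hat{\pmb{x}}$: $\max_{\pmb{c}\in\C}\pmb{c}^\tr\hat{\pmb{x}}\le\max_{\pmb{c}\in\C}\pmb{c}^\tr\pmb{x}^*$.

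For the third link, fix $k\in[K]$ and pick $\pmb{c}'\in\text{conv}(\cU)$ with $\hat{\pmb{c}}^k\le\beta\pmb{c}'$. Again using $\pmb{x}^*\ge\pmb{0}$, this gives $\hat{\pmb{c}}^{k\tr}\pmb{x}^*\le\beta\pmb{c}'^\tr\pmb{x}^*$, and since $\pmb{c}'$ lies in $\text{conv}(\cU)$, linearity of the objective yields $\pmb{c}'^\tr\pmb{x}^*\le\max_{\pmb{c}\in\cU}\pmb{c}^\tr\pmb{x}^*$. Maximising over $k$ gives $\max_{\pmb{c}\in\C}\pmb{c}^\tr\pmb{x}^*\le\beta\max_{\pmb{c}\in\cU}\pmb{c}^\tr\pmb{x}^*$. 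Chaining the three links produces the desired bound $\max_{\pmb{c}\in\cU}\pmb{c}^\tr\hat{\pmb{x}}\le\alpha\beta\max_{\pmb{c}\in\cU}\pmb{c}^\tr\pmb{x}^*$.

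There is no real analytical obstacle here; the only subtlety is being careful about two points. First, the sign assumption $\X\subseteq\mathbb{R}^n_+$ is essential to turn the componentwise vector inequalities into inner-product inequalities. Second, one must twice invoke the fact that a linear function on $\text{conv}(\cdot)$ attains its maximum at an extreme point, so that both $\hat{\pmb{c}}\in\text{conv}(\C)$ and $\pmb{c}'\in\text{conv}(\cU)$ can be replaced by maxima over the finite generating sets $\C$ and $\cU$, respectively, without losing the inequality.
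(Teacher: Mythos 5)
Your proof is correct and follows essentially the same chain of inequalities as the paper's own argument: pass from $\cU$ to $\C$ via condition~\eqref{a1} and nonnegativity of $\hat{\pmb{x}}$, use optimality of $\hat{\pmb{x}}$ on $\C$, then pass back to $\cU$ via condition~\eqref{a2}. The only cosmetic difference is that you bound all scenarios uniformly and then take maxima, whereas the paper works directly with the maximizers $\pmb{c}^*$ and $\hat{\pmb{c}}^*$; the substance is identical.
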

\begin{proof}
Let $\hat{\pmb{x}}$ be any robust optimizer with respect to $\C$ and let $\pmb{x}^*$ be any robust optimizer with respect to $\cU$.
Let $\pmb{c}^*$ denote a maximizer of $\max_{\pmb{c}\in\cU} \pmb{c}^\tr\hat{\pmb{x}}$, and let $\hat{\pmb{c}}\in \text{conv}(\C)$ be such that $\pmb{c}^* \le \alpha \hat{\pmb{c}}$ according to condition~\eqref{a1}. Furthermore, let $\hat{\pmb{c}}^*$ be a maximizer of $\max_{\pmb{c}\in\C} \pmb{c}^\tr\pmb{x}^*$, and let $\pmb{c}'\in\text{conv}(\cU)$ be such that $\hat{\pmb{c}}^* \le \beta \pmb{c}'$ according to condition~\eqref{a2}. Then it holds that
\[
\max_{\pmb{c}\in\cU} \pmb{c}^\tr\hat{\pmb{x}}
= \pmb{c}^{*\tr} \hat{\pmb{x}} 
\le  \alpha \cdot \hat{\pmb{c}}^\tr\hat{\pmb{x}} 
\le\alpha \cdot \max_{\pmb{c}\in\C} \pmb{c}^\tr\hat{\pmb{x}}
\le \alpha \cdot \max_{\pmb{c}\in\C} \pmb{c}^\tr \pmb{x}^*
= \alpha \cdot \hat{\pmb{c}}^{*\tr} \pmb{x}^*
\le \alpha\beta \cdot \pmb{c}'^\tr\pmb{x}^*
\le \alpha\beta  \cdot \max_{\pmb{c}\in\cU} \pmb{c}^\tr \pmb{x}^*
\]
\end{proof}
Let us assume that we have found some $\C$ with $\alpha$ and $\beta$ as in the conditions of Theorem~\ref{th:appr1st}. Similar to the single-scenario case, we can then replace this clustering with $\beta=1$ and equality in \eqref{a2}. To be more precise, for each $k\in[K]$, let $\tilde{\pmb{c}}^k\in\text{conv}(\cU)$ be such that $\hat{\pmb{c}}^k \le \beta\tilde{\pmb{c}}^k$. Let $\C'=\{\tilde{\pmb{c}}^k : k\in[K]\}$ and set $\alpha' = \alpha\beta$ and $\beta'=1$. Then it holds that for all $i\in[N]$, there is $\tilde{\pmb{c}}\in\text{conv}(\C')$ such that $\pmb{c}^i \le \alpha'\tilde{\pmb{c}}$ and for all $k\in[K]$ there exists $\pmb{c}\in\text{conv}(\cU)$ such that $\tilde{\pmb{c}}^k = \beta\pmb{c}$. Hence, we can consider $\beta=1$ without impairing the resulting approximation guarantee.

\begin{corollary}\label{cor3}
Let $\cU= \{\pmb{c}^1,\ldots,\pmb{c}^N\}$ be an uncertainty set, and let $\{\hat{\pmb{c}}^1,\ldots,\hat{\pmb{c}}^K\} = \C \subseteq \text{conv}(\cU)$. Let $\alpha\in\mathbb{R}_+$ such that for each $i\in[N]$, there is a scenario $\hat{\pmb{c}}\in\text{conv}(\mathcal{C})$ such that $\pmb{c}^i \le \alpha \hat{\pmb{c}}$. Then, an optimal solution to the robust problem with respect to $\C$ gives an $\alpha$-approximation to the robust problem with respect to $\cU$.
\end{corollary}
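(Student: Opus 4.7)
The plan is to derive Corollary~\ref{cor3} as an immediate specialization of Theorem~\ref{th:appr1st} with $\beta=1$. The only thing to check is that the hypotheses of the corollary imply both conditions \eqref{a1} and \eqref{a2} of the theorem, with $\beta$ set to $1$.

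First I would observe that condition \eqref{a1} is literally the hypothesis of the corollary: for every $i\in[N]$, there is some $\hat{\pmb{c}}\in\text{conv}(\C)$ with $\pmb{c}^i\le\alpha\hat{\pmb{c}}$. Nothing needs to be done there. Next I would verify \eqref{a2} with $\beta=1$. Since by assumption $\C\subseteq\text{conv}(\cU)$, every $\hat{\pmb{c}}^k$ is itself an element of $\text{conv}(\cU)$, so taking $\pmb{c}:=\hat{\pmb{c}}^k$ yields the trivial inequality $\hat{\pmb{c}}^k\le 1\cdot \pmb{c}$. Thus both \eqref{a1} and \eqref{a2} hold with the given $\alpha$ and $\beta=1$.

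I would then invoke Theorem~\ref{th:appr1st} directly: any optimal solution to the robust problem over $\C$ is an $\alpha\beta=\alpha$-approximation to the robust problem over $\cU$, which is exactly the claim. The preceding paragraph of the paper already motivates why restricting attention to $\C\subseteq\text{conv}(\cU)$ loses no generality, so the corollary captures the natural normalization of Theorem~\ref{th:appr1st}.

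There is essentially no obstacle here; the only care needed is in reading off the trivial witness $\pmb{c}=\hat{\pmb{c}}^k$ from the containment $\C\subseteq\text{conv}(\cU)$ to certify $\beta=1$. Consequently the proof can be written in a couple of lines, simply citing Theorem~\ref{th:appr1st}.
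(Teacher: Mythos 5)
Your proposal is correct and matches the paper's intended argument: the corollary is exactly Theorem~\ref{th:appr1st} specialized to $\beta=1$, with condition~\eqref{a2} certified by the trivial witness $\pmb{c}=\hat{\pmb{c}}^k\in\text{conv}(\cU)$ guaranteed by the containment $\C\subseteq\text{conv}(\cU)$. The paper leaves this proof implicit (the preceding paragraph performs the normalization to $\beta=1$), and your two-line derivation is precisely what is intended.
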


The conditions of Corollary~\ref{cor3} can be framed as an optimization problem. We use variables $\lambda_{ki}\ge 0$ with $\sum_{i\in[N]} \lambda_{ki}=1$ to define $\hat{\pmb{c}}^k$ as a convex combination of scenarios $\cU$. Furthermore, for each scenario $\pmb{c}^i \in\cU$ we use $\mu_{ik}\ge 0$ with $\sum_{k\in[K]} \mu_{ik} =1$ to define the convex combination of scenarios in $\mathcal{U}$ that dominates $\pmb{c}^i$. The resulting optimization problem is then as follows.
\begin{align}
\max\ &t \label{contstart} \\
\text{s.t. } & t c^i_j \le \sum_{k\in[K]} \sum_{\ell\in[N]} \mu_{ik}  \lambda_{k\ell} c^\ell_j &  \forall i\in[N], j\in[n]  \label{contNL}\\
& \sum_{i\in[N]} \lambda_{ki} = 1 & \forall k\in[K] \\
& \sum_{k\in[K]} \mu_{ik} = 1 & \forall i\in[N] \\
& t \ge 0 \\
& \lambda_{ki} \ge 0 & \forall k\in[K], i\in[N] \\
& \mu_{ik} \ge 0 & \forall i\in[N], k\in[K] \label{contend}
\end{align}
where the aggregated scenarios in $\mathcal{C}$ can be calculated using $\hat{c}^k_j = \sum_{i\in[N]} \lambda_{ki} c^i_j$.

\begin{corollary}
Any feasible solution to problem~(\ref{contstart}-\ref{contend}) gives a reduced scenario set $\mathcal{C}$, where an optimal solution for the robust optimization problem with respect to $\mathcal{C}$ is a $1/t$-approximation to the original robust optimization problem with respect to $\cU$.
\end{corollary}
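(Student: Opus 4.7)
The plan is to unwind the definitions in problem~(\ref{contstart}--\ref{contend}) and show that any feasible solution already verifies the hypotheses of Corollary~\ref{cor3} with $\alpha = 1/t$, so that no genuinely new argument is needed.

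First I would observe that the set $\C = \{\hat{\pmb{c}}^1,\ldots,\hat{\pmb{c}}^K\}$, defined by $\hat{c}^k_j = \sum_{i\in[N]} \lambda_{ki} c^i_j$, lies inside $\text{conv}(\cU)$: the coefficients $\lambda_{ki}$ are nonnegative and sum to one over $i$, so each $\hat{\pmb{c}}^k$ is by construction a convex combination of $\pmb{c}^1,\ldots,\pmb{c}^N$. This verifies the containment requirement $\C\subseteq\text{conv}(\cU)$ from Corollary~\ref{cor3}.

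Next, for each $i\in[N]$ I would introduce the convex combination $\tilde{\pmb{c}}^{(i)} := \sum_{k\in[K]} \mu_{ik}\hat{\pmb{c}}^k \in \text{conv}(\C)$, which is well-defined since $\mu_{ik}\ge 0$ and $\sum_k \mu_{ik}=1$. Substituting the formula for $\hat{c}^k_j$ turns the nonlinear constraint~(\ref{contNL}) into
\[ t\, c^i_j \le \sum_{k\in[K]} \mu_{ik} \sum_{\ell\in[N]} \lambda_{k\ell} c^\ell_j = \sum_{k\in[K]} \mu_{ik}\hat{c}^k_j = \tilde{c}^{(i)}_j \qquad \forall j\in[n], \]
which (after dividing by $t$, with the case $t=0$ being vacuous since the approximation guarantee $1/t$ is then infinite) is exactly $\pmb{c}^i \le (1/t)\,\tilde{\pmb{c}}^{(i)}$ for some $\tilde{\pmb{c}}^{(i)}\in\text{conv}(\C)$. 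This is precisely the hypothesis of Corollary~\ref{cor3} with $\alpha = 1/t$.

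Applying Corollary~\ref{cor3} then immediately yields that an optimal robust solution with respect to $\C$ is a $1/t$-approximation to the robust problem over $\cU$. I do not foresee a real obstacle here: the only subtlety is recognizing that the bilinear right-hand side of~(\ref{contNL}) factors as $\sum_k \mu_{ik}\hat{c}^k_j$, so that the $\mu$ variables provide exactly the convex combination of $\C$ required by Corollary~\ref{cor3}. The role of the proof is essentially bookkeeping: the problem~(\ref{contstart}--\ref{contend}) is a direct reformulation of the conditions of the corollary, and any feasible $t$ is a valid, though not necessarily tight, value of $1/\alpha$.
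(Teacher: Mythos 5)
Your proof is correct and matches the paper's intent exactly: the paper states this corollary without proof precisely because problem~(\ref{contstart}--\ref{contend}) is constructed as a direct encoding of the hypotheses of Corollary~\ref{cor3}, with $\lambda$ defining $\C\subseteq\text{conv}(\cU)$ and $\mu$ defining the dominating convex combination, so that $\alpha=1/t$. Your unwinding of the bilinear constraint and the handling of the degenerate case $t=0$ are both fine.
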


Problem~(\ref{contstart}-\ref{contend}) consists of determining values for a matrix $M=(\mu_{ik})$ and a matrix $\Lambda=(\lambda_{ki})$ of dimensions $N\times K$ and $K\times N$, respectively. Intuitively, we can thus imagine a solution akin to a compression step from $N$ down to $K$ scenarios, and a subsequent decompression step from $K$ up to $N$ scenario again.

Note that problem~(\ref{contstart}-\ref{contend}) is non-linear, due to the multiplication of $\mu$ and $\lambda$ variables. State-of-the-art solvers such as Gurobi offer built-in methods to solve bilinear optimization problems of this type to optimality, based on spatial branching. In our experience, this approach quickly becomes intractable, even for small problems with $n<10$ and $N<10$. We discuss two problem variants and one heuristic to avoid this computational difficulty.

In these problem variants, we require either variables $\mu$ or variables $\lambda$ to be binary instead of continuous. We can then linearize constraint~\eqref{contNL} by introducing new variables $\tau_{ik\ell} = \mu_{ik}\cdot \lambda_{k\ell}$ with
\begin{align*}
& t c^j_i \le \sum_{k\in[K]} \sum_{\ell\in[N]} \tau_{ik\ell} c^\ell_j & \forall i\in[N], j\in[n] \\
& \tau_{ik\ell} \le \mu_{ik} & \forall i\in[N], k\in[K], \ell\in[N] \\
& \tau_{ik\ell} \le \lambda_{k\ell} & \forall i\in[N], k\in[K], \ell\in[N] 
\end{align*}
However, in both cases, more efficient models are possible.

First, we may consider variables $\mu_{ik}$ to be binary instead of continuous. This means that we forgo the possibility to dominate scenarios $\pmb{c}^i$ through convex combinations of scenarios from $\mathcal{C}$; instead, each scenario from $\cU$ is assigned a single scenario from $\mathcal{C}$. This is a clustering approach, where the scenarios from $\cU$ are grouped in $K$ distinct clusters, and each cluster defines a scenario $\hat{\pmb{c}}$. The advantage of this approach is that the nonlinearity in problem~(\ref{contstart}-\ref{contend}) can be linearized by using
\begin{align}
& t c^i_j \le \sum_{\ell\in[N]} \lambda_{k\ell} c^\ell_j + M(1-\mu_{ik}) & \forall i\in[N], j\in[n], k\in[K]
\end{align}
where $M\ge c^i_j$ for all $i\in[N],j\in[n]$ is a sufficiently large constant. We refer to this approach as IP-$\mu$.

Second, we may consider variables $\lambda_{ki}$ to be binary. The consequence is that our reduced scenario set $\mathcal{C}$ now consists of a subset of scenarios from $\cU$, instead of convex combinations of these scenarios. In other words, we consider a scenario reduction problem, where $N-K$ scenarios need to be removed. We can model this problem using binary variables $\lambda_i$ to denote whether scenario $i\in[N]$ is part of $\mathcal{C}$. The optimization problem we consider is thus:
\begin{align}
\max\ &t \label{ipl1}\\
\text{s.t. } & t \pmb{c}^i \le \sum_{\ell\in[N]} \mu_{i\ell} \pmb{c}^\ell & \forall i\in[N] \label{ipl2}\\
& \mu_{i\ell} \le \lambda_\ell & \forall i\in[N], \ell\in[N] \label{ipl3}\\
& \sum_{i\in[N]} \lambda_i = K \label{ipl4}\\
& \sum_{\ell\in[N]} \mu_{i\ell} = 1& \forall i\in[N] \label{ipl5}\\
& \lambda_i \in \{0,1\} & \forall i\in[N] \label{ipl6}\\
& \mu_{i\ell} \ge 0 & \forall i\in[N], \ell\in[N] \label{ipl7}\\
& t \ge 0 \label{ipl8}
\end{align}
We refer to this approach as IP-$\lambda$. Observe that constraints~(\ref{ipl3}-\ref{ipl7}) are the same as in previous models proposed to reduce scenarios in stochastic optimization, see, e.g., problem (20) in \cite{bertsimas2022optimization}, which itself is based on \cite{heitsch2003scenario} and \cite{rujeerapaiboon2018scenario}. This is not surprising, as they naturally model a choice of a subset of scenarios, where each of the existing scenarios needs to be put in relation to one of the scenarios from the chosen subset.

Finally, we may solve the continuous and non-linear problem~(\ref{contstart}-\ref{contend}) heuristically using an iterative approach that is explained later in this section. For simplicity, we refer to both this method as well as to the formulation (\ref{contstart}-\ref{contend}) as Cont, if the context is clear.

Unfortunately, all three problems are hard to solve, as the following result indicates. The proofs of this result can be found in Appendix~\ref{sec:proofs}.

\begin{theorem}
The decision versions of Cont, IP-$\mu$ and IP-$\lambda$ are NP-complete.
\end{theorem}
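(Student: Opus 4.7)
I would prove all three NP-completeness claims in parallel, arguing membership in \textbf{NP} first and then constructing polynomial-time reductions. Membership is straightforward for IP-$\mu$ and IP-$\lambda$: the binary vector serves as a certificate, after which the remaining system is a linear program in the other variables and $t$, and feasibility at target $t^{*}$ is verified in polynomial time. For Cont, the bilinear products $\mu_{ik}\lambda_{k\ell}$ make membership more delicate, but it can be established by arguing that any feasible solution with $t \ge t^{*}$ admits a polynomially-sized rational certificate, e.g.\ by fixing one of $\mu$ or $\lambda$ at a vertex of its defining polytope and solving the residual LP.

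For NP-hardness of IP-$\lambda$, I plan to reduce from \textsc{Exact Cover by 3-Sets}. Given a universe $X$ with $|X|=3q$ and a collection $\mathcal{C}=\{S_{1},\ldots,S_{p}\}$ of 3-element subsets, I set $\pmb{c}^{j}:=\mathbf{1}_{S_{j}}\in\mathbb{R}_{+}^{|X|}$, $K:=q$, and $t^{*}:=1/3$. In the forward direction, given an exact cover $S\subseteq[p]$ of size $q$, for every $j\in[p]$ the three elements of $S_{j}$ are each covered by a unique member of $S$, so placing weight $1/3$ on each of the (at most three) covering sets produces a convex combination componentwise dominating $\tfrac{1}{3}\pmb{c}^{j}$. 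In the reverse direction, if some $S$ of size $q$ attains $t\ge 1/3$, then for each element $i\in X$ the constraint at coordinate $i$ of any $\pmb{c}^{j}$ with $i\in S_{j}$ forces some $\ell\in S$ with $i\in S_{\ell}$; hence $S$ covers $X$, and since $|S|=q$ with each $S_{\ell}$ of size $3$, $S$ is necessarily an exact cover.

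For IP-$\mu$ and Cont, the X3C reduction above does not transfer directly: cluster centroids are arbitrary convex combinations, and a pair of scenarios can be simultaneously dominated by a single ``mixed'' centroid even when no exact cover exists. I therefore plan to augment the source instance with \emph{witness coordinates}, introducing for each scenario additional coordinates whose values jump sharply at the $t^{*}$ threshold, so that any centroid dominating a cluster must concentrate its weight on a small, controlled number of scenarios corresponding to a valid combinatorial choice. The same construction extends to Cont because the tight witness coordinates eliminate the extra freedom of a continuous $\lambda$. The principal obstacle, which I expect to be the most delicate step, is designing this gadget: a naive tag of uniform magnitude does not rule out fractional mixtures (a small example with $q=2$ and four 3-sets admits a two-cluster solution at $t=1/3$ with no exact cover), so one needs either multiple tags per scenario or sharply asymmetric entries to close the integrality gap between the source problem and the continuous formulation. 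Once such a gadget is in place, the forward and backward verifications mirror the IP-$\lambda$ argument above.
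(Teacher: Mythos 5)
Your X3C reduction for IP-$\lambda$ is correct and is a genuinely different (and arguably cleaner) route than the paper's, which reduces from Dominating Set instead. But the statement asserts NP-completeness of \emph{three} problems, and for IP-$\mu$ and Cont your proposal stops precisely where the real work begins: you correctly diagnose that mixed centroids break the X3C construction and that some ``witness coordinate'' gadget with sharply asymmetric entries is needed, but you do not construct it, and you explicitly flag it as an open design problem. That gadget is the substance of the paper's proof. There, each vertex $v_i$ of a Dominating Set instance contributes two scenarios in dimension $n$: a unit-type scenario $\tilde{\pmb{c}}^i$ equal to $16$ in coordinate $i$ and $0$ elsewhere, and a neighborhood-type scenario $\pmb{c}^i$ equal to $12$ on the closed neighborhood of $v_i$ and $9$ elsewhere, with threshold $t=4/3$. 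The magnitudes are calibrated so that every neighborhood-type scenario dominates every other after scaling by $4/3$ (since $9\cdot 4/3=12$), while dominating $\tilde{\pmb{c}}^i$ requires an entry of at least $12$ in coordinate $i$; a case analysis then shows that any feasible fractional solution can be converted into one where all $\lambda$ and $\mu$ are binary. That integrality lemma is exactly what lets a single reduction certify hardness of Cont, IP-$\mu$ and IP-$\lambda$ simultaneously, and it is exactly the step missing from your plan. Until you exhibit such a gadget and prove the corresponding concentration/integrality argument, NP-hardness of IP-$\mu$ and Cont is unestablished.

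Two smaller points. First, in your forward direction for IP-$\lambda$, ``weight $1/3$ on each of the at most three covering sets'' does not sum to one when fewer than three distinct sets cover $S_j$; weight each covering set by $1/3$ times the number of elements of $S_j$ it covers. This is cosmetic. Second, your NP-membership argument for Cont is circular as stated: fixing $\mu$ and taking a vertex-optimal $\lambda$ yields a polynomial-size rational point only if the fixed $\mu$ was already rational of polynomial size, and vice versa. Membership for the bilinear problem genuinely needs care (the paper itself does not address it), so if you claim completeness rather than just hardness you need a non-circular certificate argument.
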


To illustrate these three methods, we present two datasets in Figure~\ref{fig:1stexamples}. In both cases all scenarios are non-dominated (notice that dominated scenarios would not influence our methods, i.e., they become filtered out automatically). In the left column, data points $\cU$ presented as black circles follow a convex curve, while in the right column, data points follow a concave curve. The red crosses represent the clustered scenario sets $\mathcal{C}$, where we reduced $N=20$ original scenarios down to $K=3$ scenarios.

\begin{figure}[htbp]
\begin{center}
\subfigure[Dataset 1, IP-$\mu$\label{fig1a}]{\includegraphics[width=0.4\textwidth]{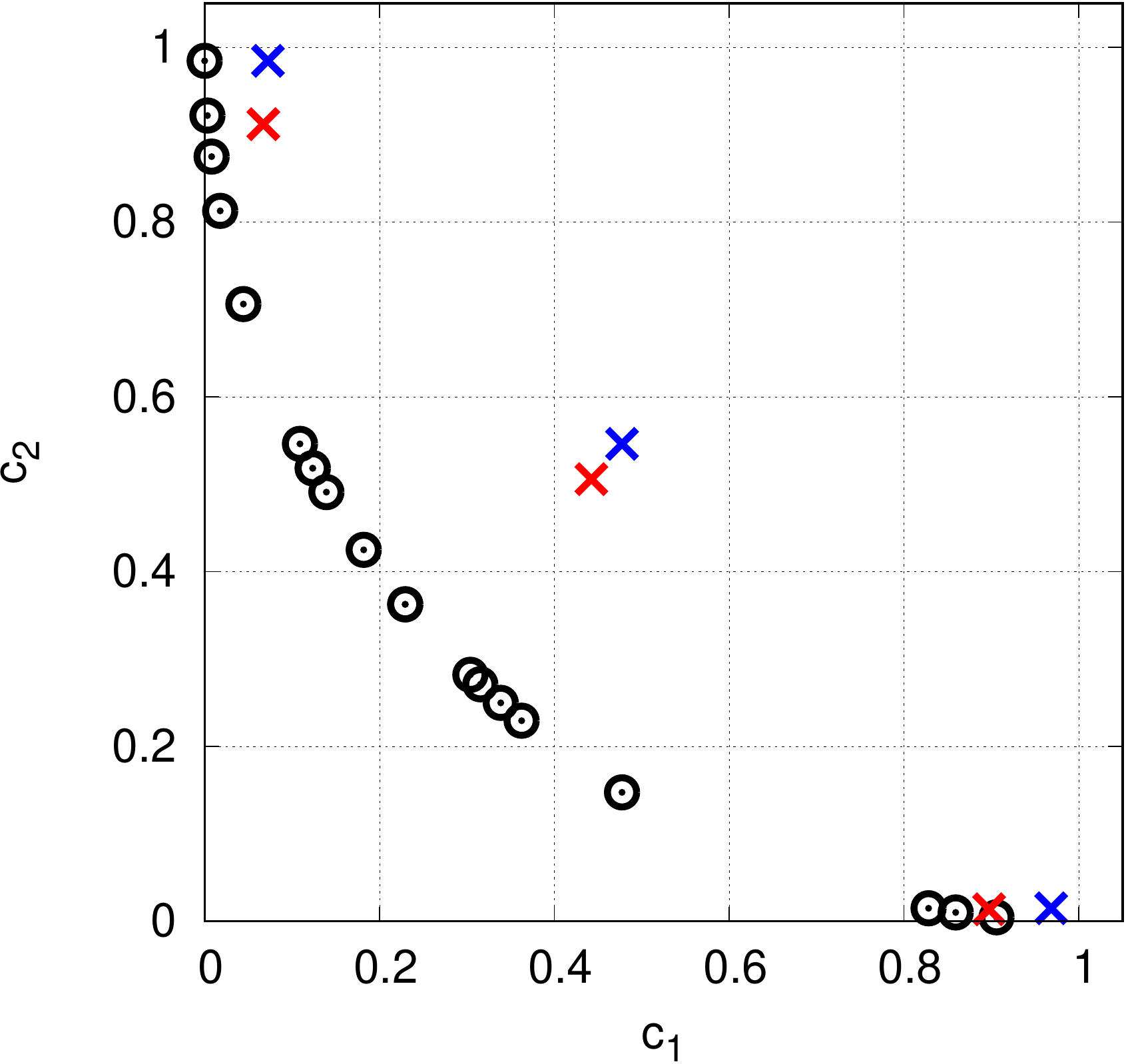}}\hspace{5mm}%
\subfigure[Dataset 2, IP-$\mu$\label{fig1b}]{\includegraphics[width=0.4\textwidth]{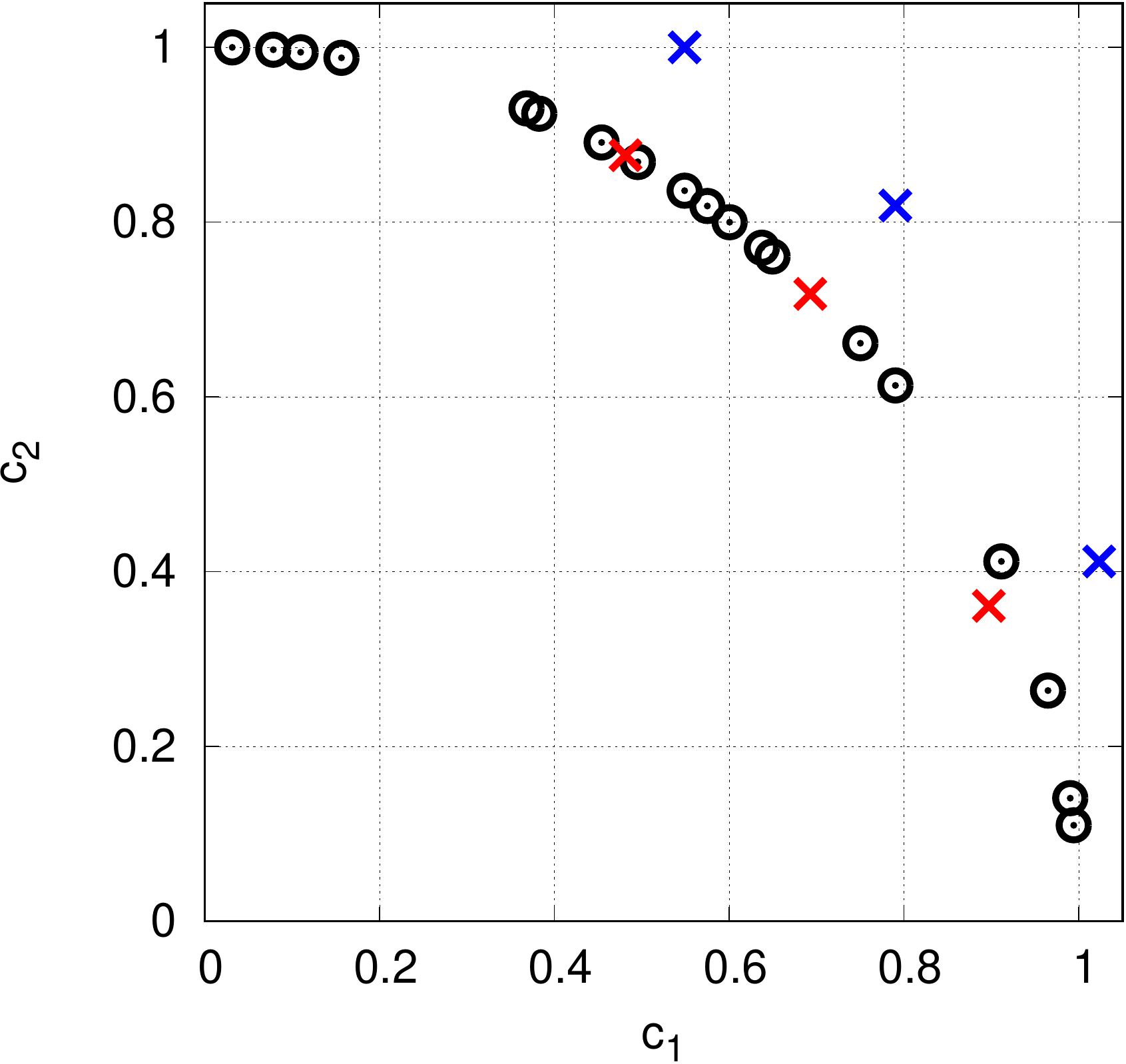}}
\subfigure[Dataset 1, IP-$\lambda$\label{fig1c}]{\includegraphics[width=0.4\textwidth]{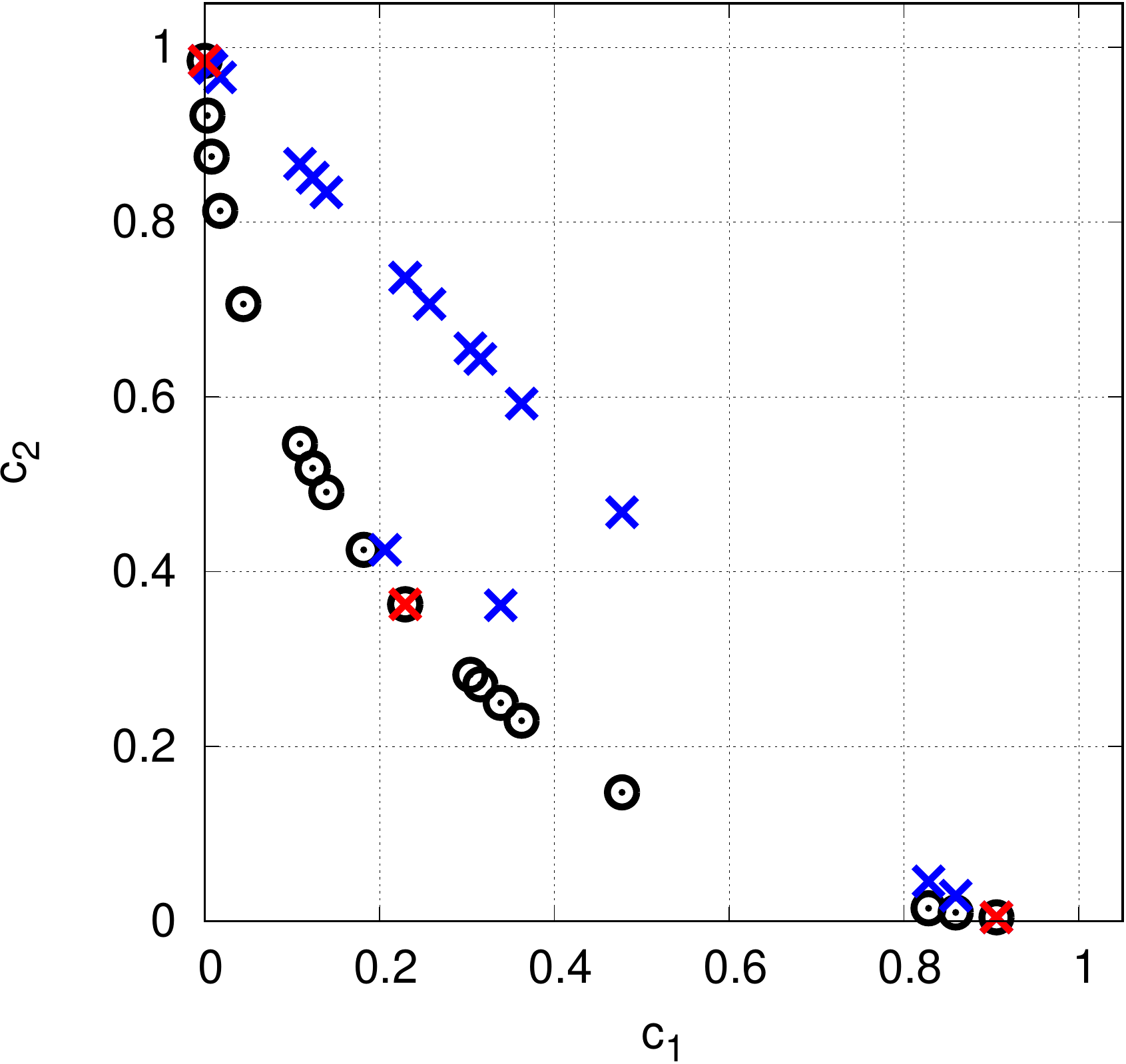}}\hspace{5mm}%
\subfigure[Dataset 2, IP-$\lambda$\label{fig1d}]{\includegraphics[width=0.4\textwidth]{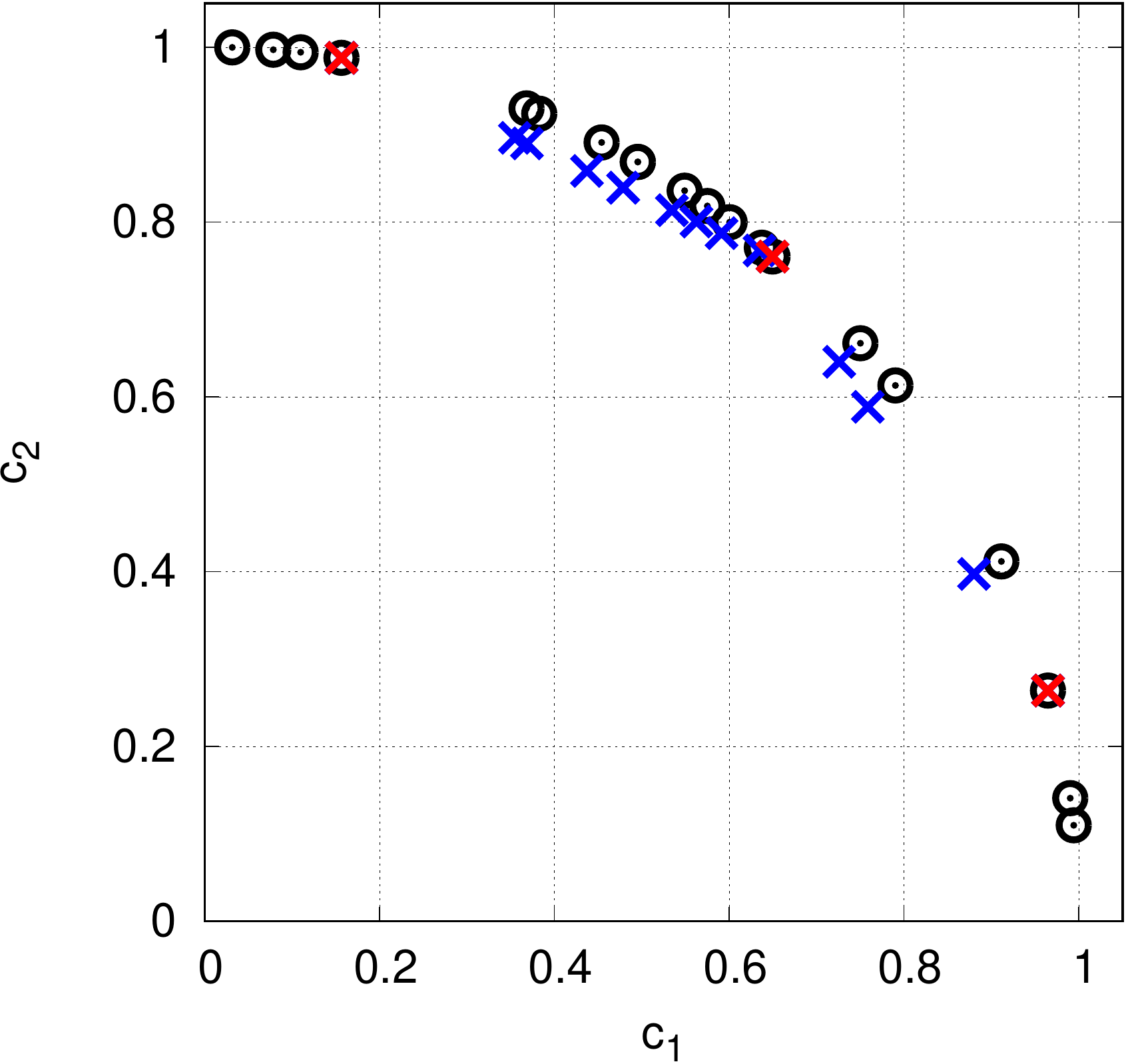}}
\subfigure[Dataset 1, Cont\label{fig1e}]{\includegraphics[width=0.4\textwidth]{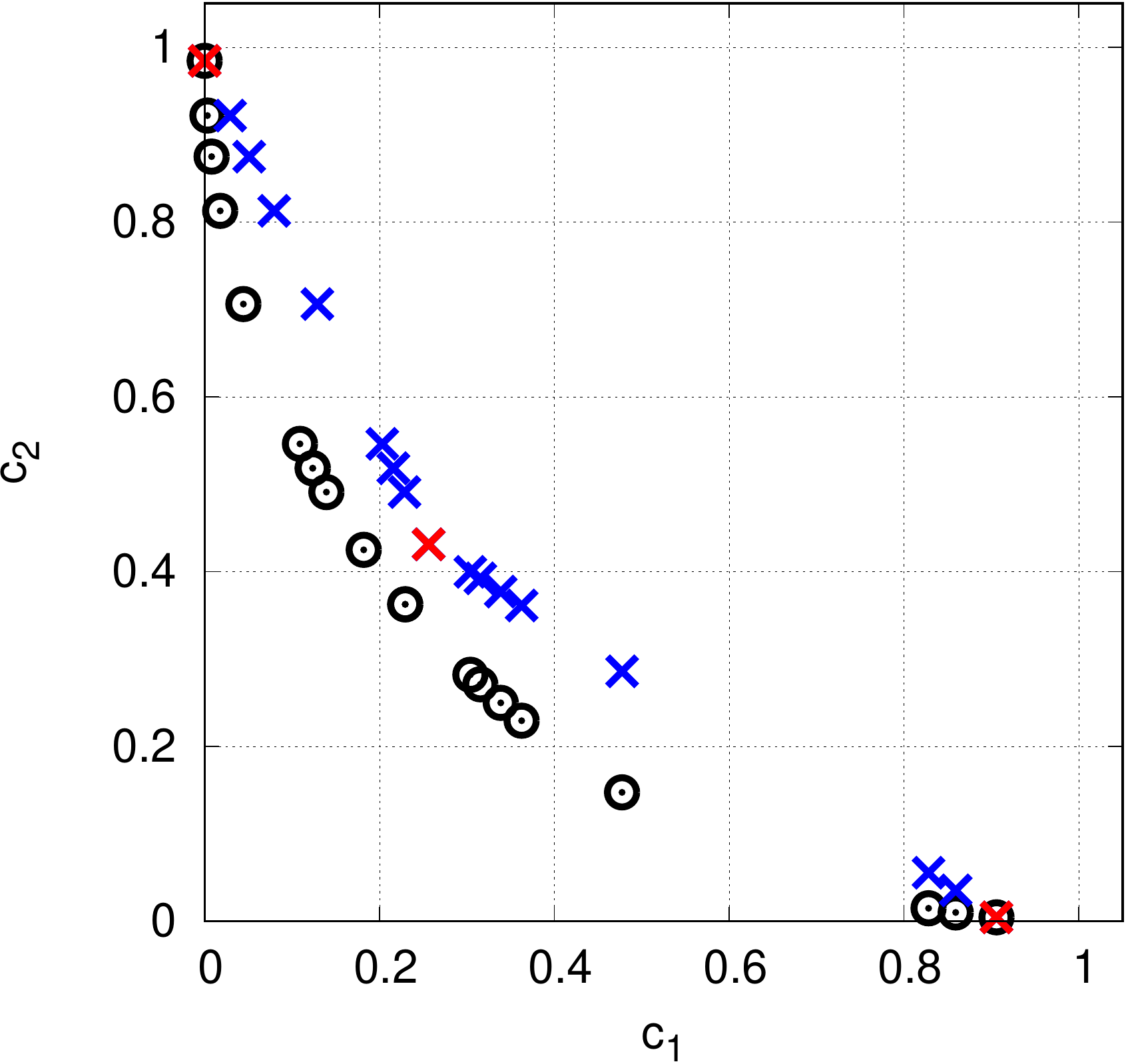}}\hspace{5mm}%
\subfigure[Dataset 2, Cont\label{fig1f}]{\includegraphics[width=0.4\textwidth]{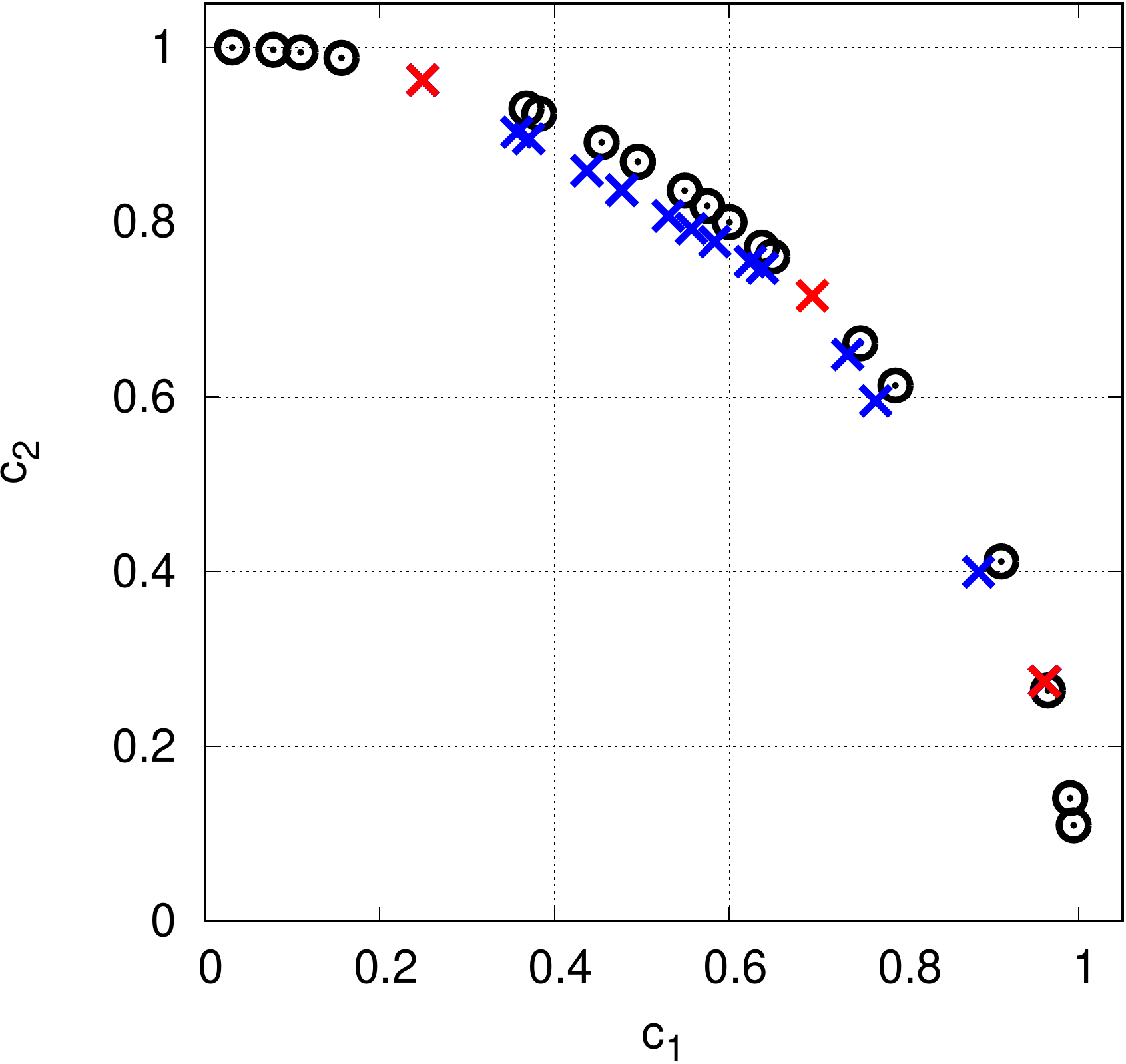}}
\caption{Clustering examples.}\label{fig:1stexamples}
\end{center}
\end{figure}

First consider Figures~\ref{fig1a} and \ref{fig1b}. In each case, the aggregated scenarios are convex combinations of the original scenarios. We represent by blue crosses the scaling of these scenarios such that each original scenario is dominated by one of the blue crosses. These scaling factors (1.079 for Figure~\ref{fig1a}, 1.141 for Figure~\ref{fig1b}) give the approximation guarantee of the reduced set.

Now consider Figures~\ref{fig1c} and \ref{fig1d}, which correspond to the solutions of IP-$\lambda$, i.e., we choose $K=3$ out of $N=20$ scenarios that result in the best approximation guarantee. The blue crosses indicate the convex combinations built from these three scenarios such that each black point becomes dominated (after scaling the blue crosses up by a sufficient factor). While we do not need any scaling at all for the first dataset, meaning that we have a guarantee that the resulting robust solution is optimal for the original problem (note that this could even be achieved with $K=2$), a scaling factor of 1.042 is necessary for the second dataset.

Finally, Figures~\ref{fig1e} and \ref{fig1f} show the solutions found by using method Cont. As for IP-$\mu$, the red crosses that indicate scenario set $\mathcal{U}$ are convex combinations of the original scenarios. As for IP-$\lambda$, we can use these for new convex combinations to dominate the original scenarios after scaling. While we still have a guarantee that the resulting robust solution is optimal in case of the first dataset, the second dataset needs a slightly smaller scaling factor in comparison to IP-$\lambda$, resulting in a 1.039-approximation guarantee.

Note that Theorem~\ref{th:appr1st} does not depend on the underlying problem, but only on the scenario data. To illustrate and compare the strength of the resulting bounds, we generate random data with $N=10$ scenarios and $n=10$. We test data generated by a uniform distribution in $\{1,\ldots,100\}$, and by a multivariate normal distribution that is truncated to $[1,100]$. In Figure~\ref{fig:ex1st} we show the corresponding results, averaged over 100 problem instances. On the horizontal axis is $K$, the size of the reduced uncertainty set $\mathcal{C}$. Note that these are guarantees that come with the clustering, without solving any robust optimization problem.

\begin{figure}[htb]
\begin{center}
\subfigure[Uniform data.]{\includegraphics[width=.48\textwidth]{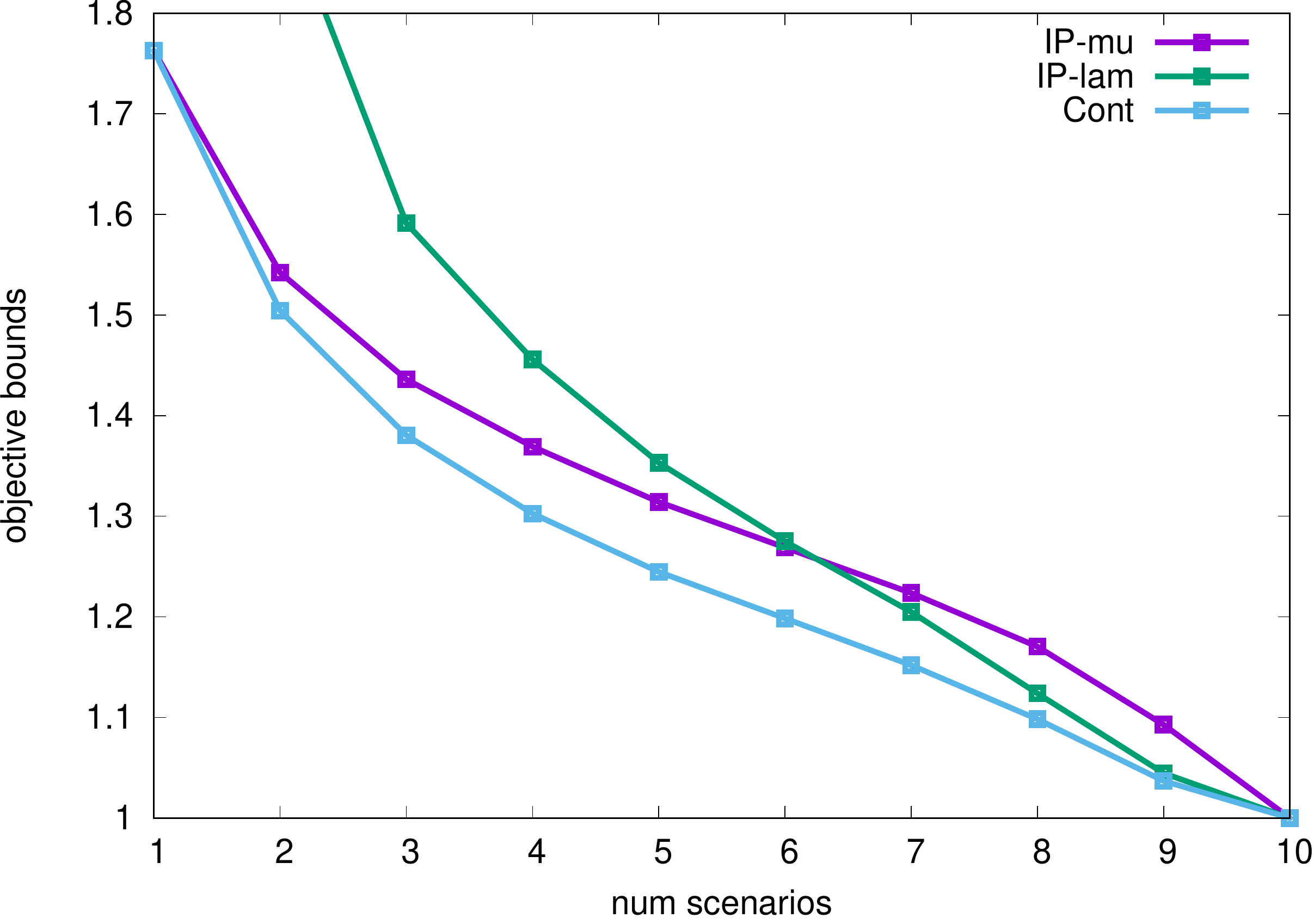}}\hfill
\subfigure[Normal data.]{\includegraphics[width=.48\textwidth]{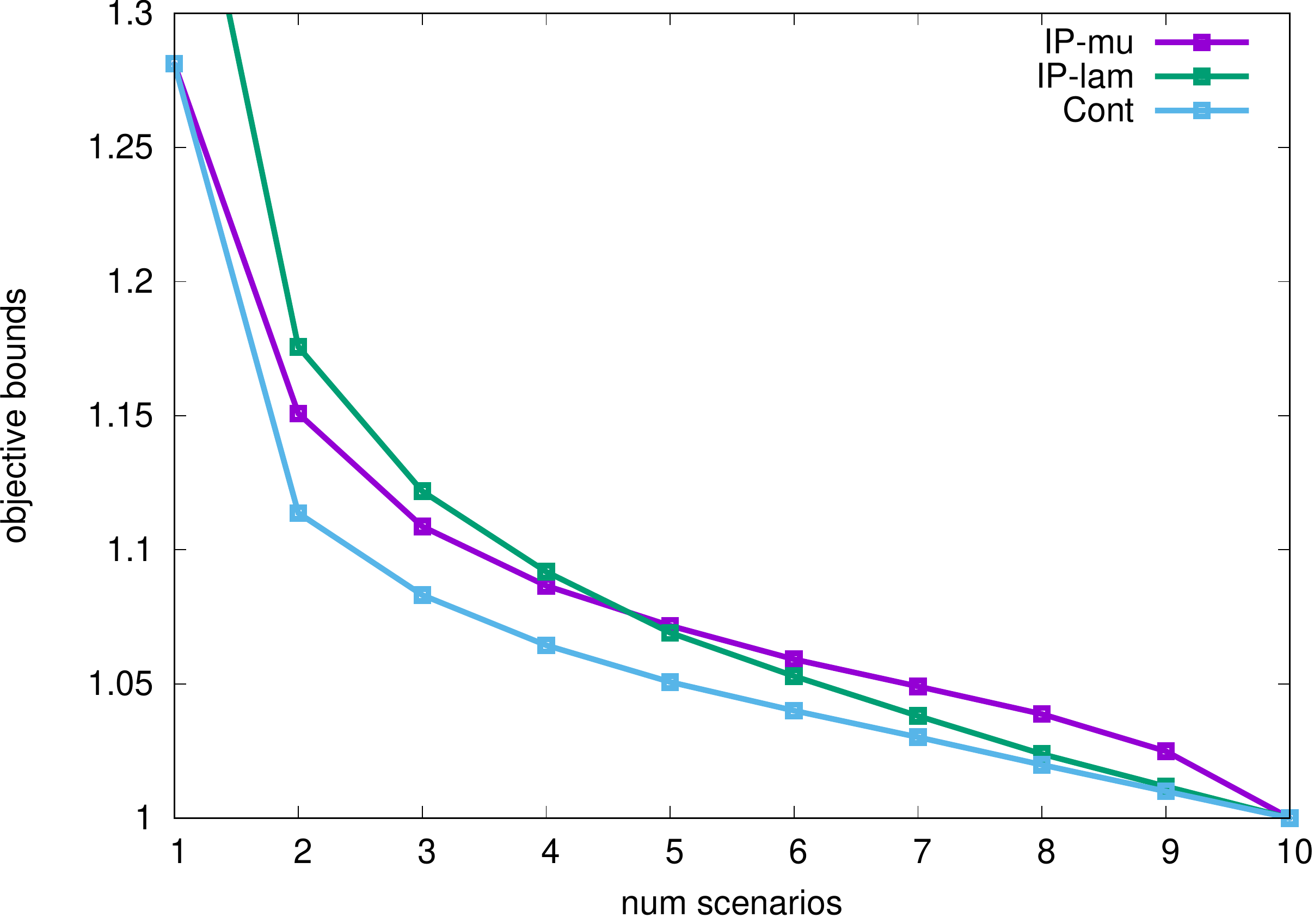}}
\end{center}
\caption{Approximation guarantees for randomly generated data with $n=N=10$.}\label{fig:ex1st}
\end{figure}

By construction, an optimal solution to the nonlinear, continuous problem gives a guarantee that is not worse than the guarantees found by IP-$\mu$ and IP-$\lambda$. Accordingly, note that the corresponding curve (in light blue) remains below the other two curves at all times. Note that for $K=1$, IP-$\mu$ and Cont give the same results. That is to be expected, as binary and continuous variables $\mu$ lead to the same result in this case (each scenario $\pmb{c}^i$ must be dominated by the one scenario $\hat{\pmb{c}}^1$ that is available). Furthermore, IP-$\lambda$ tends to perform considerably worse than the other methods for small $K$, as a small subset of scenarios is not sufficient to represent the original data (whereas convex combinations of scenarios are more suitable here). This changes as $K$ increases, when IP-$\lambda$ tends to outperform IP-$\mu$. Notice also that approximation guarantees for normally distributed data are considerably smaller than for uniformly distributed data.

Recall that the midpoint solution to a robust optimization problem (i.e., the solution that is found by solving a problem with a single scenario $\hat{\pmb{c}}=\sum_{i\in[N]}\pmb{c}^i/N$) gives an $N$-approximation. Using this observation, we can give a simple upper bound on the approximation guarantee found by our optimal clustering model.

\begin{corollary}\label{cor:ck}
Let $\cU=\{\pmb{c}^1,\ldots,\pmb{c}^N\}$, and let $C_1\cup C_2 \cup \ldots \cup C_K$ be any partition of $\cU$. Let $\cU'=\{\hat{\pmb{c}}^1,\ldots,\hat{\pmb{c}}^K\}$ be the average scenarios of each partition. Then, an optimizer of the robust problem with respect to $\cU'$ is a $\max_{k\in[K]} |C_k|$-approximation for the robust problem with respect to $\cU$.
\end{corollary}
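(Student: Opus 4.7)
My plan is to reduce the statement to Corollary~\ref{cor3} by exhibiting a suitable $\alpha$ for the partition-average clustering. Since each $\hat{\pmb{c}}^k$ is the arithmetic mean of the scenarios in $C_k$, it is a convex combination of elements of $\cU$, so $\C := \cU' \subseteq \text{conv}(\cU)$. This makes $\C$ admissible as input to Corollary~\ref{cor3}; the only remaining task is to verify the dominance condition with the right value of $\alpha$.

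For the dominance, let $\pmb{c}^i \in \cU$ and let $k$ be the index of the cluster containing $\pmb{c}^i$, so $\pmb{c}^i \in C_k$. I would dominate $\pmb{c}^i$ using the single aggregated scenario $\hat{\pmb{c}}^k \in \text{conv}(\C)$. Because $\cU \subseteq \mathbb{R}^n_+$, every summand in the defining sum of $\hat{\pmb{c}}^k$ is componentwise nonnegative, and
\[
|C_k|\,\hat{\pmb{c}}^k \;=\; \sum_{j\in C_k} \pmb{c}^j \;\ge\; \pmb{c}^i,
\]
the inequality holding coordinatewise since the remaining $|C_k|-1$ summands are nonnegative. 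Therefore $\pmb{c}^i \le |C_k|\,\hat{\pmb{c}}^k \le \bigl(\max_{k\in[K]} |C_k|\bigr) \hat{\pmb{c}}^k$.

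Applying Corollary~\ref{cor3} with $\alpha = \max_{k\in[K]} |C_k|$ then yields the claimed approximation guarantee. I do not anticipate a genuine obstacle: the only point worth flagging is that componentwise nonnegativity of the scenarios is essential, since without it one could not discard the other $|C_k|-1$ summands to obtain the dominance bound. As a sanity check, the trivial partition $K=1$ recovers the $N$-approximation of the midpoint solution mentioned just before the statement.
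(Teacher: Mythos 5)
Your proof is correct and takes essentially the same route as the paper: the paper's one-line proof invokes Theorem~\ref{th:appr1st} together with the observation that every $\pmb{c}\in C_k$ is dominated by $|C_k|\,\hat{\pmb{c}}^k$ (the paper writes this inequality with the two sides swapped, apparently a typo, and your direction is the correct one), which is exactly your dominance bound fed into the $\beta=1$ specialization, Corollary~\ref{cor3}. No gap; your explicit appeal to componentwise nonnegativity is the same ingredient the paper leaves implicit.
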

\begin{proof}
The claim follows directly from Theorem~\ref{th:appr1st} and the observation that $\hat{\pmb{c}}^k \le |C_k| \pmb{c}$ for all $\pmb{c}\in C_k$.
\end{proof}

\begin{corollary}\label{cor:bound}
There is an optimal solution to problem~(\ref{contstart}-\ref{contend}) that gives an approximation guarantee of at most $\lceil N/K \rceil$.
\end{corollary}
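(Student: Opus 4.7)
The plan is to exhibit a feasible solution to problem~(\ref{contstart}-\ref{contend}) whose associated approximation guarantee is at most $\lceil N/K\rceil$; since the optimal value of the maximization problem can only be larger (i.e., $t$ at least as large, so $1/t$ no worse), this establishes the bound.

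First I would invoke Corollary~\ref{cor:ck} as the conceptual engine: partition $\cU$ into $K$ clusters $C_1,\ldots,C_K$ whose sizes differ by at most one, so that $\max_k|C_k|=\lceil N/K\rceil$, and take $\hat{\pmb{c}}^k$ to be the average of the scenarios in $C_k$. Corollary~\ref{cor:ck} then directly tells us this clustering is a $\lceil N/K\rceil$-approximation.

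Second, I would translate this balanced partition into a feasible solution of~(\ref{contstart}-\ref{contend}). Concretely, for each $k\in[K]$ set $\lambda_{ki}=1/|C_k|$ if $\pmb{c}^i\in C_k$ and $0$ otherwise (so that $\hat{c}^k_j=\sum_{i}\lambda_{ki}c^i_j$ is indeed the cluster average), and for each $i\in[N]$ set $\mu_{ik}=1$ for the unique $k$ with $\pmb{c}^i\in C_k$ and $0$ otherwise. Choose $t=1/\lceil N/K\rceil$. The simplex conditions $\sum_i\lambda_{ki}=1$ and $\sum_k\mu_{ik}=1$ are immediate from the construction, and the nonlinear constraint~\eqref{contNL} reduces, for each $i$ with $\pmb{c}^i\in C_k$, to $t\,c^i_j\le\sum_{\ell\in C_k}\frac{1}{|C_k|}c^\ell_j$, i.e., $c^i_j\le|C_k|\cdot\hat{c}^k_j$, which holds coordinatewise because $\pmb{c}^i\le|C_k|\hat{\pmb{c}}^k$ by definition of the average of nonnegative vectors and $|C_k|\le\lceil N/K\rceil=1/t$.

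Since this is a feasible point with objective $t=1/\lceil N/K\rceil$, any optimal solution attains at least this value of $t$, which by the last corollary before this statement corresponds to an approximation guarantee of at most $\lceil N/K\rceil$. The only minor obstacle is just checking the arithmetic of the partition sizes, which is routine.
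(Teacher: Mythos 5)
Your argument is correct and follows the same route as the paper: partition $\cU$ into clusters of as equal size as possible, take cluster averages, and invoke Corollary~\ref{cor:ck}. The paper states this in one line; you additionally spell out the explicit feasible $(\lambda,\mu,t)$ for problem~(\ref{contstart}--\ref{contend}) and verify constraint~\eqref{contNL}, which is a welcome but not essentially different elaboration.
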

\begin{proof}
The claim follows directly from Corollary~\ref{cor:ck} and by partitioning scenarios in clusters of size as uniformly as possible.
\end{proof}

Note that Corollary~\ref{cor:bound} applies to Cont and IP-$\mu$. It does not apply to IP-$\lambda$; indeed, a simple problem as $\cU=\{(1,0)^\tr,(0,1)^\tr\}$ with $K=1$ illustrates that it may not be possible to find a finite approximation guarantee with this approach, if there exists no scenario that is non-zero in each dimension.

We now describe a heuristic to solve Cont. Recall the K-means algorithm to find a clustering that minimizes the squared quadratic distance of each data point to its nearest center point: We first choose a random set of $K$ existing points as center points. To each point, we assign the nearest center point to find $K$ clusters. We calculate the midpoint of each cluster. This is repeated by iteratively assigning the nearest center to each point to find new clusters, and finding new center points as the midpoints of clusters, as long as the sum of distances keeps improving. As the result of the K-means algorithm depends on the set of starting centers, it is usually repeated multiple times and the best result is chosen.

A similar heuristic can be applied in our setting. We first choose a random subset as $\mathcal{C}$, i.e., for each $k\in[K]$, we choose a random $i\in[N]$ and set $\lambda_{ki} = 1$. With these starting scenarios, we solve a problem only in $\mu$ and $t$, i.e., we solve the following linear program:
\begin{align*}
\max\ & t \\
\text{s.t. } & tc^i_j \le \sum_{k\in[K]} \mu_{ik} \hat{c}^k_j & \forall i\in[N], j\in[n] \\
& \sum_{k\in[K]} \mu_{ik} = 1 & \forall i\in[N] \\
&t \ge  0 \\
& \mu_{ik} \ge 0 & \forall i\in[N], k\in[K]
\end{align*}
Note that this problem can be decomposed. If we use a separate variables $t_i$ for each cluster, we may use constraints
\[ t_i c^i_j \le \sum_{k\in[K]} \mu_{ik} \hat{c}^k_j \qquad \forall i\in[N], j\in[n] \]
and an objective function $\sum_{i\in[N]} t_i$ to minimize the sum of scaling factors. By construction, this will lead to the same worst-case guarantee, but has the advantage that we find the best variables $\mu_{ik}$ for each scenario $i\in[N]$, and not only for those scenarios where the scaling factor defines the worst case.

Having found a solution for $\mu$ this way, we again solve a linear program to determine new scenarios by their defining $\lambda$-values:
\begin{align*}
\max\ & t \\
\text{s.t. } & tc^i_j \le \sum_{k\in[K]} \sum_{\ell\in[N]} ( \mu_{ik}c^\ell_j) \lambda_{k\ell}  & \forall i\in[N], j\in[n] \\
& \sum_{i\in[N]} \lambda_{ki} = 1 & \forall k\in[K] \\
&t \ge  0 \\
& \lambda_{ki} \ge 0 & \forall k\in[K], i\in[N]
\end{align*}
We repeat this iterative heuristic, alternating between solving for $\mu$ and solving for $\lambda$, until the objective value does not improve. As our result may depend on the random starting scenarios, we repeat the process multiple times and choose the solution with the best guarantee in the end. As we only solve linear programs, each iteration remains solvable in polynomial time. We can still expect the K-means algorithm to be faster, as it does not even require the solution of linear programs in each iteration. We therefore expect a clustering that is better suited for the requirements of robust optimization, but at the cost of increased computational effort, when our method is applied instead of K-means.

\section{Optimal Clustering for Two-Stage Robust Optimization}
\label{sec:twostage}

We now consider two-stage robust optimization problems, where the decision maker has the opportunity to react once the scenario has been revealed. We split variables into here-and-now variables $\pmb{x}\in\mathbb{R}^{n_x}_+$ that need to be decided beforehand, and wait-and-see variables $\pmb{y}\in\mathbb{R}^n_+$ that can be decided later. Let $\X(\pmb{x}) = \{\pmb{y}\in\mathbb{R}^{n}_+ : (\pmb{x},\pmb{y}) \in\X\}$ be the set of feasible second-stage solutions and let $\X'=\{\pmb{x}\in\mathbb{R}^{n_x}_+ : \X(\pmb{x})\neq\emptyset\}$ denote the set of feasible first-stage solutions. The two-stage robust optimization problem is then to solve
\[ \min_{\pmb{x}\in\X'} \max_{\pmb{c}\in\cU} \min_{\pmb{y}\in\X(\pmb{x})} \pmb{C}^\tr\pmb{x} + \pmb{c}^\tr\pmb{y} \]

We first recall why such two-stage problems can be hard to approximate when recourse variables are discrete.
Consider an example with 
$\X= \{ (\pmb{x},\pmb{y})\in\{0,1\}^{2\times 2} : x_1 + x_2 + y_1 + y_2 = 1\}$
where we can buy at most one item in the first stage, and only buy an item in the second stage if we decided not to buy in the first stage. There are two scenarios with costs as in Table~\ref{tab:ex2st}. An optimal solution is not to buy anything in the first stage. As it is possible to pack an item with costs 0 in each scenario, the objective value of this solution is 0. Now consider any solution with respect to a scenario $\hat{\pmb{c}} = \lambda \pmb{c}^1 + (1-\lambda)\pmb{c}^2$ for some $\lambda\in(0,1)$. As $M\to\infty$, it becomes cheaper to buy an item in the first stage, which gives an objective value of 1. In particular, solving with respect to the midpoint scenario does not give an approximation guarantee.

\begin{table}[htb]
\begin{center}
\begin{tabular}{r|rr}
 & 1 & 2 \\
 \hline
$C_j$ & 1 & 1 \\
$c^1_j$ & $M$ & 0 \\
$c^2_j$ & 0 & $M$
\end{tabular}
\caption{Example two-stage problem. Each column represents costs of one item.}\label{tab:ex2st}
\end{center}
\end{table}

Furthermore, observe that in two-stage problems, we have that the objective value with respect to $\cU$ and with respect to $\text{conv}(\cU)$ are in general not the same. Intuitively, this means that a clustering approach needs to balance two effects: on the one hand, by choosing scenarios from the convex hull, we may increase the objective value; on the other hand, by choosing less than $N$ scenarios, we may decrease the objective value.

We first consider the case how to find a good approximation using a single scenario.

\begin{theorem}\label{th:twostage}
Let $\cU= \{\pmb{c}^1,\ldots,\pmb{c}^N\}\subseteq\mathbb{R}^n_+$, and let $\hat{\pmb{c}}\in\mathbb{R}^n_+$. Let $\alpha,\beta\ge 1$ be such that
\begin{align*}
\forall i\in[N]:\  &\pmb{c}^i \le \alpha \hat{\pmb{c}} \\
\exists \pmb{c}'\in \cU:\ &\hat{\pmb{c}} \le \beta \pmb{c}'
\end{align*}
Then, any optimizer with respect to $\hat{\pmb{c}}$ gives an $\alpha\beta$-approximation to the two-stage robust optimization problem with respect to $\cU$.
\end{theorem}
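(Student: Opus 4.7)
The plan is to follow the structure of the one-stage argument from Theorem~\ref{th:onestage}, but with extra care to keep the first-stage cost term $\pmb{C}^\tr\pmb{x}$ under control, since unlike the cost vector $\pmb{c}$, this term does not scale with $\alpha$ or $\beta$. Let $\hat{\pmb{x}}$ be an optimizer of the single-scenario two-stage problem with respect to $\hat{\pmb{c}}$, and let $\pmb{x}^*$ be an optimizer of the original two-stage robust problem with respect to $\cU$. I would chain the inequalities from $\max_{\pmb{c}\in\cU}\min_{\pmb{y}\in\X(\hat{\pmb{x}})}\pmb{C}^\tr\hat{\pmb{x}} + \pmb{c}^\tr\pmb{y}$ to $\alpha\beta \cdot \max_{\pmb{c}\in\cU}\min_{\pmb{y}\in\X(\pmb{x}^*)}\pmb{C}^\tr\pmb{x}^* + \pmb{c}^\tr\pmb{y}$ in four steps.

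First, for each $\pmb{c}\in\cU$, I use $\pmb{c}\le\alpha\hat{\pmb{c}}$ together with $\pmb{y}\ge\pmb{0}$ to replace $\pmb{c}^\tr\pmb{y}$ by $\alpha\hat{\pmb{c}}^\tr\pmb{y}$ inside the inner minimization, which removes the outer maximum. To pull $\alpha$ out in front of the whole first/second-stage expression, I would use $\alpha\ge 1$ and the fact that $\pmb{C}^\tr\hat{\pmb{x}}\ge 0$ (since $\pmb{C}\ge\pmb{0}$ and $\hat{\pmb{x}}\ge\pmb{0}$), giving $\pmb{C}^\tr\hat{\pmb{x}} + \alpha\hat{\pmb{c}}^\tr\pmb{y} \le \alpha(\pmb{C}^\tr\hat{\pmb{x}} + \hat{\pmb{c}}^\tr\pmb{y})$. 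This yields a bound by $\alpha\cdot\min_{\pmb{y}\in\X(\hat{\pmb{x}})}\pmb{C}^\tr\hat{\pmb{x}} + \hat{\pmb{c}}^\tr\pmb{y}$. Second, by definition $\hat{\pmb{x}}$ is optimal for the single-scenario problem with cost $\hat{\pmb{c}}$, so this expression is at most $\alpha\cdot\min_{\pmb{y}\in\X(\pmb{x}^*)}\pmb{C}^\tr\pmb{x}^* + \hat{\pmb{c}}^\tr\pmb{y}$.

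Third, I use $\hat{\pmb{c}}\le\beta\pmb{c}'$ for some $\pmb{c}'\in\cU$ together with the same non-negativity and $\beta\ge 1$ trick to bound this by $\alpha\beta\cdot\min_{\pmb{y}\in\X(\pmb{x}^*)}\pmb{C}^\tr\pmb{x}^* + \pmb{c}'^\tr\pmb{y}$. Fourth, since $\pmb{c}'\in\cU$, this last quantity is at most $\alpha\beta\cdot\max_{\pmb{c}\in\cU}\min_{\pmb{y}\in\X(\pmb{x}^*)}\pmb{C}^\tr\pmb{x}^* + \pmb{c}^\tr\pmb{y}$, which is exactly $\alpha\beta$ times the optimal objective of the original two-stage robust problem.

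The main delicate points, which I would flag explicitly, are exactly the two places where the two-stage setting differs from the one-stage one. The hypothesis $\alpha,\beta\ge 1$ (not merely $\alpha,\beta\ge 0$ as in Theorem~\ref{th:onestage}) is essential, because it is what allows us to dominate the unscaled first-stage term $\pmb{C}^\tr\pmb{x}$ by a scaled version of itself and thus pull $\alpha$ (respectively $\beta$) out as a common factor; without $\alpha,\beta\ge 1$ the argument breaks. Similarly, the requirement $\pmb{c}'\in\cU$ rather than $\pmb{c}'\in\mathrm{conv}(\cU)$ is needed precisely because, as the discussion around Table~\ref{tab:ex2st} shows, the two-stage objective on $\cU$ and on $\mathrm{conv}(\cU)$ may differ, so replacing $\pmb{c}'$ by a worst-case scenario in the last step only works when $\pmb{c}'$ is one of the original scenarios.
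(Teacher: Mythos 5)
Your proof is correct and follows essentially the same chain of inequalities as the paper: bound the worst case over $\cU$ by $\alpha$ times the single-scenario objective of $\hat{\pmb{x}}$, use optimality of $\hat{\pmb{x}}$ for $\hat{\pmb{c}}$, then use $\hat{\pmb{c}}\le\beta\pmb{c}'$ with $\pmb{c}'\in\cU$ to return to the original worst case, pulling $\alpha$ and $\beta$ past the first-stage term via $\alpha,\beta\ge1$ and $\pmb{C}^\tr\pmb{x}\ge0$. Your step through the specific $\pmb{c}'$ before bounding by the maximum is in fact a slightly more careful phrasing of the paper's fifth inequality, and your flagged delicate points match the paper's own remarks.
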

Note that there are two differences to the conditions stated in Theorem~\ref{th:onestage}: Parameters $\alpha$ and $\beta$ need to be greater or equal one instead of zero; and the scenario $\pmb{c}'$ needs to be in $\cU$ instead of $\text{conv}(\cU)$.
\begin{proof}
Let $\hat{\pmb{c}}$ as assumed, and let $\hat{\pmb{x}}$ be a minimizer to the two-stage problem with $\hat{\pmb{c}}$ as the only scenario. We denote by $\pmb{x}^*$ a minimizer with respect to $\cU$. Then we can estimate:
\begin{align}
\max_{\pmb{c}\in\cU} \min_{\pmb{y}\in\X(\hat{\pmb{x}})} \pmb{C}^\tr\hat{\pmb{x}} + \pmb{c}^\tr\pmb{y} &= \pmb{C}^\tr\hat{\pmb{x}} +  \max_{\pmb{c}\in\cU} \min_{\pmb{y}\in\X(\hat{\pmb{x}})} \pmb{c}^\tr\pmb{y} \label{2stproof-1}\\
&\le \pmb{C}^\tr\hat{\pmb{x}} + \min_{\pmb{y}\in\X(\hat{\pmb{x}})} \alpha \hat{\pmb{c}}^\tr \pmb{y} \label{2stproof-2} \\
&\le \alpha \left( \pmb{C}^\tr\hat{\pmb{x}} + \min_{\pmb{y}\in\X(\hat{\pmb{x}})} \hat{\pmb{c}}^\tr \pmb{y} \right) \label{2stproof-3} \\
&\le \alpha \left( \pmb{C}^\tr\pmb{x}^* + \min_{\pmb{y}\in\X(\pmb{x}^*)} \hat{\pmb{c}}^\tr \pmb{y} \right) \label{2stproof-4} \\
&\le \alpha \left( \pmb{C}^\tr\pmb{x}^* + \beta \max_{\pmb{c}\in\cU} \min_{\pmb{y}\in\X(\pmb{x}^*)} \pmb{c}^\tr\pmb{y} \right) \label{2stproof-5} \\
& \le \alpha\beta \left( \pmb{C}^\tr\pmb{x}^* + \max_{\pmb{c}\in\cU} \min_{\pmb{y}\in\X(\pmb{x}^*)} \pmb{c}^\tr\pmb{y} \right) \label{2stproof-6}
\end{align}
In equation~\eqref{2stproof-1}, we move the constant part $\pmb{C}^\tr\hat{\pmb{x}}$ out of the max-min problem. Let $\pmb{c}^j\in\cU$ be a maximizer of this max-min problem. Due to the assumptions, it holds that $\pmb{c}^j\le \alpha \hat{\pmb{c}}$, which gives the estimate~\eqref{2stproof-2}. As $\alpha \ge 1$ and $\pmb{C}^\tr\hat{\pmb{x}}\ge 0$, we can conclude \eqref{2stproof-3}. By definition $\hat{\pmb{x}}$ is the optimizer with respect to the single scenario $\hat{\pmb{c}}$. In particular, its objective value with respect to this uncertainty set is not larger than the objective value of solution $\pmb{x}^*$, hence \eqref{2stproof-4} follows. Let $\pmb{c}^j$ be a maximizer of $\max_{\pmb{c}\in\cU} \min_{\pmb{y}\in\X(\pmb{x}^*)} \pmb{c}^\tr\pmb{y}$. By construction, $\hat{\pmb{c}} \le \beta\pmb{c}^j$. Hence, estimate \eqref{2stproof-5} is valid. Finally, as $\beta \ge 1$ and $\pmb{C}^\tr\pmb{x}^*\ge 0$, we reach \eqref{2stproof-6}. We conclude that the robust objective value of solution $\hat{\pmb{x}}$ is at most $\alpha\beta$ times the optimal robust objective value, which completes the proof.
\end{proof}

We revisit the small example from Section~\ref{sec:onestage} with $\cU=\{(4,2)^\tr,(2,3)^\tr\}$. As before, we can calculate tight values for $\alpha$ and $\beta$, given any candidate scenario $\hat{\pmb{c}}$. The corresponding approximation guarantees are presented in Figure~\ref{fig:2dex2}. Guarantees above 3 are again truncated for better readability.
\begin{figure}[htb]
\begin{center}
\includegraphics[width=0.5\textwidth]{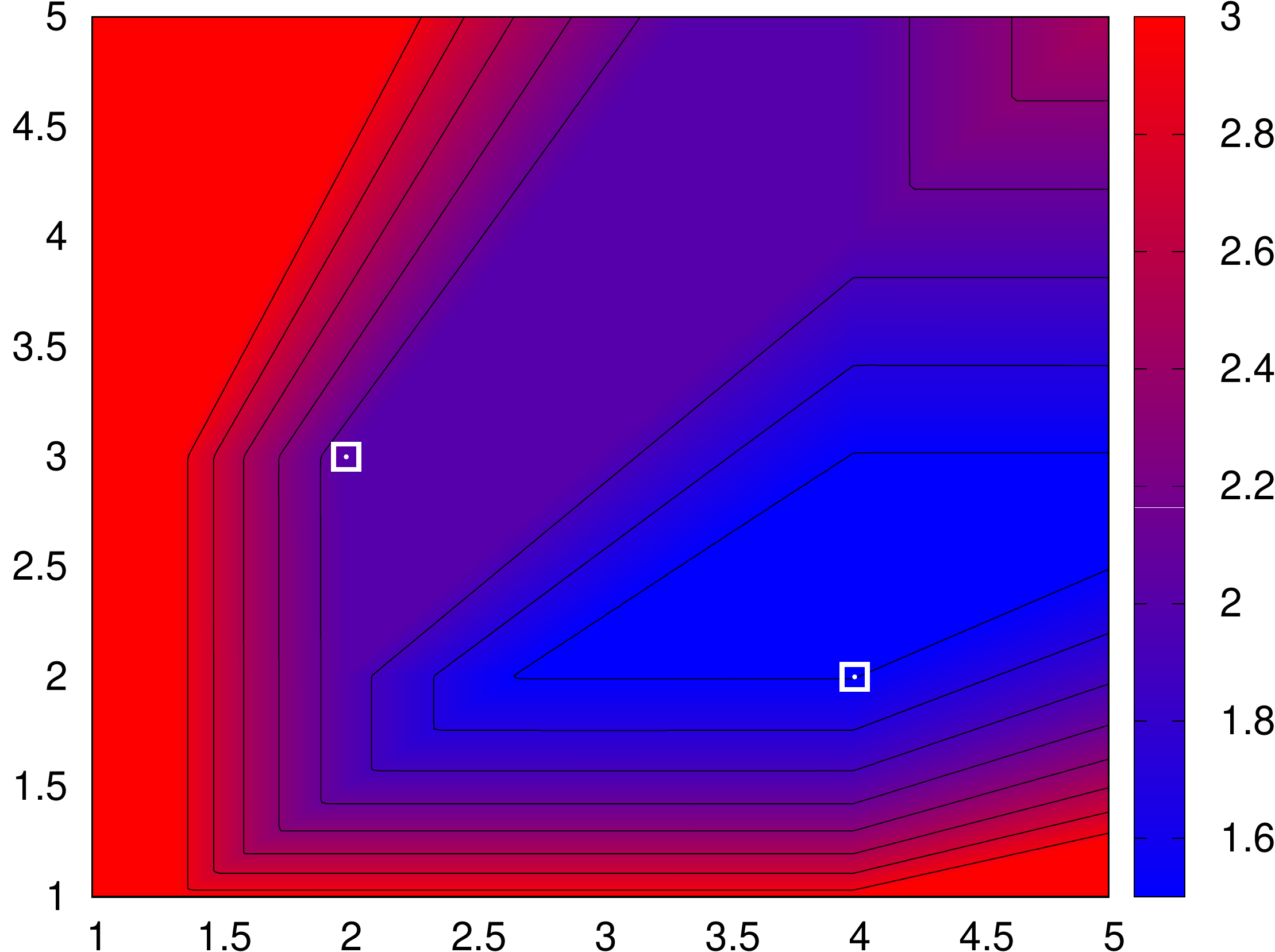}
\caption{Approximation guarantees for a two-stage example problem.}\label{fig:2dex2}
\end{center}
\end{figure}
Note that contour lines are not straight anymore, as scaling scenario $\hat{\pmb{c}}$ does not necessarily result in the same approximation guarantee, because $\alpha$ and $\beta$ are bounded by one. The optimal approximation guarantee is 1.5 and is attained, for example, in $\hat{\pmb{c}}=(4,2)^\tr$ (and also by scenarios in the vicinity of this point). Note that $\hat{\pmb{c}}\in\cU$.

Indeed, it can be seen that there is always an optimal choice in $\cU$. Consider any fixed choice of $\hat{\pmb{c}}$, $\pmb{c}'$, $\alpha,\beta\ge 1$ for which it holds that
\[ \pmb{c}^i \le \alpha \hat{\pmb{c}} \le \alpha\beta\pmb{c}' \qquad \forall i\in[N] \]
In this case, we can set $\alpha' = \alpha\beta$ and $\beta' = 1$ and see that
\[ \pmb{c}^i \le \alpha' \pmb{c}' \le \alpha'\beta' \pmb{c}' \qquad \forall i\in[N] \]
As $\alpha\beta=\alpha'\beta'$, scenario $\pmb{c}'$ thus gives the same approximation guarantee as $\hat{\pmb{c}}$.

We now extend these observations to multiple representative scenarios.

\begin{theorem}\label{th2stage}
Let $\cU=\{\pmb{c}^1,\ldots,\pmb{c}^N\}\subseteq\mathbb{R}^n_+$, and let $\mathcal{C}=\{\hat{\pmb{c}}^1,\ldots,\hat{\pmb{c}}^K\}\subseteq\mathbb{R}^n_+$ be such that
\begin{align*}
& \forall i\in[N]\, \exists k\in[K] \text{ such that } \pmb{c}^i \le \alpha \hat{\pmb{c}}^k \\
& \forall k\in[K]\, \exists i\in[N] \text{ such that } \hat{\pmb{c}}^k \le \beta \pmb{c}^i
\end{align*}
for some $\alpha\ge 1$ and $\beta\ge 1$. Then, an optimal solution to the two-stage robust problem with respect to $\mathcal{C}$ gives an $\alpha\beta$-approximation to the two-stage robust problem with respect to $\cU$.
\end{theorem}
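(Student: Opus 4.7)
The plan is to reuse the chain of inequalities from the proof of Theorem~\ref{th:twostage}, but replacing the single-scenario ``intermediate'' value by a maximum over $\mathcal{C}$. Let $\hat{\pmb{x}}$ denote an optimal first-stage solution to the two-stage robust problem over $\mathcal{C}$, and let $\pmb{x}^{*}$ denote an optimal first-stage solution over $\cU$. The goal is to bound
\[
\pmb{C}^{\tr}\hat{\pmb{x}} + \max_{\pmb{c}\in\cU}\min_{\pmb{y}\in\X(\hat{\pmb{x}})} \pmb{c}^{\tr}\pmb{y}
\]
by $\alpha\beta$ times the analogous quantity at $\pmb{x}^{*}$, which is exactly the desired $\alpha\beta$-approximation bound.

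First I would split off the first-stage term $\pmb{C}^{\tr}\hat{\pmb{x}}$ so that the uncertainty only concerns the recourse cost, just as in line \eqref{2stproof-1} of the previous proof. Then I pick a worst-case $\pmb{c}^{j}\in\cU$ for $\hat{\pmb{x}}$ and invoke the first hypothesis to find $\hat{\pmb{c}}^{k}\in\mathcal{C}$ with $\pmb{c}^{j}\le\alpha\hat{\pmb{c}}^{k}$; since the recourse variables $\pmb{y}$ are nonnegative this gives
\[
\min_{\pmb{y}\in\X(\hat{\pmb{x}})} \pmb{c}^{j\tr}\pmb{y} \;\le\; \alpha\min_{\pmb{y}\in\X(\hat{\pmb{x}})}\hat{\pmb{c}}^{k\tr}\pmb{y} \;\le\; \alpha\max_{\hat{\pmb{c}}\in\mathcal{C}}\min_{\pmb{y}\in\X(\hat{\pmb{x}})}\hat{\pmb{c}}^{\tr}\pmb{y}.
\]
Using $\alpha\ge 1$ and $\pmb{C}^{\tr}\hat{\pmb{x}}\ge 0$, I can pull $\alpha$ outside to obtain an upper bound of $\alpha$ times the two-stage robust objective value of $\hat{\pmb{x}}$ with respect to $\mathcal{C}$.

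Next, since $\hat{\pmb{x}}$ is optimal for the two-stage robust problem with uncertainty set $\mathcal{C}$, I replace it by $\pmb{x}^{*}$ inside both the first-stage cost and the inner max-min; this is the analogue of step~\eqref{2stproof-4}. Then I pick a worst-case $\hat{\pmb{c}}^{k^{*}}\in\mathcal{C}$ for $\pmb{x}^{*}$ and apply the second hypothesis to get $\pmb{c}^{i}\in\cU$ with $\hat{\pmb{c}}^{k^{*}}\le\beta\pmb{c}^{i}$; the same nonnegativity argument yields
\[
\min_{\pmb{y}\in\X(\pmb{x}^{*})}\hat{\pmb{c}}^{k^{*}\tr}\pmb{y} \;\le\; \beta\max_{\pmb{c}\in\cU}\min_{\pmb{y}\in\X(\pmb{x}^{*})}\pmb{c}^{\tr}\pmb{y}.
\]
Finally, using $\beta\ge 1$ and $\pmb{C}^{\tr}\pmb{x}^{*}\ge 0$, I pull out the $\beta$ factor to arrive at $\alpha\beta$ times the robust objective of $\pmb{x}^{*}$ on $\cU$.

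The main obstacle, which is conceptually why the hypotheses are stated with $\alpha,\beta\ge 1$ and with $\pmb{c}'\in\cU$ rather than $\pmb{c}'\in\text{conv}(\cU)$, is that in the two-stage setting one cannot freely rescale as in the one-stage case: the first-stage cost $\pmb{C}^{\tr}\pmb{x}$ is not affected by $\alpha$ or $\beta$, so pulling these factors outside a sum requires them to be at least one; and the fact that the worst case over $\cU$ need not equal the worst case over $\text{conv}(\cU)$ forces $\pmb{c}'$ to be an actual element of $\cU$ in the last step. Once these two structural points are respected, the estimate is a direct generalization of the single-scenario proof, and the verification is routine.
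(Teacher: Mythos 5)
Your proposal is correct and follows essentially the same chain of inequalities as the paper's own proof: split off the first-stage cost, bound the worst case over $\cU$ by $\alpha$ times the worst case over $\C$ using nonnegativity of $\pmb{y}$, pull $\alpha$ out using $\alpha\ge 1$ and $\pmb{C}^\tr\hat{\pmb{x}}\ge 0$, invoke optimality of $\hat{\pmb{x}}$ for $\C$, then apply the second hypothesis and $\beta\ge 1$ symmetrically. Your closing remarks on why $\alpha,\beta\ge 1$ and $\pmb{c}'\in\cU$ are needed match the paper's own discussion of the differences from the one-stage setting.
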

\begin{proof}
Using the same notation as before, we can estimate as follows.
\begin{align}
\max_{\pmb{c}\in\cU} \min_{\pmb{y}\in\X(\hat{\pmb{x}})} \pmb{C}^\tr\hat{\pmb{x}} + \pmb{c}^\tr\pmb{y} &= \pmb{C}^\tr\hat{\pmb{x}} + \max_{\pmb{c}\in\cU} \min_{\pmb{y} \in\X(\hat{\pmb{x}})} \pmb{c}^\tr\pmb{y} \\
& \le \pmb{C}^\tr\hat{\pmb{x}} + \max_{\hat{\pmb{c}}\in\mathcal{C}} \min_{\pmb{y}\in\X(\hat{\pmb{x}})} \alpha \hat{\pmb{c}}^\tr\pmb{y} \\
& \le \alpha \left( \pmb{C}^\tr\hat{\pmb{x}} + \max_{\hat{\pmb{c}}\in\mathcal{C}} \min_{\pmb{y}\in\X(\hat{\pmb{x}})} \alpha \hat{\pmb{c}}^\tr\pmb{y} \right) \\ 
& \le \alpha \left( \pmb{C}^\tr\pmb{x}^* + \max_{\hat{\pmb{c}}\in\mathcal{C}} \min_{\pmb{y}\in\X(\pmb{x}^*)} \hat{\pmb{c}}^\tr\pmb{y} \right) \\ 
& \le \alpha \left( \pmb{C}^\tr\pmb{x}^* + \beta \max_{\pmb{c}\in\cU} \min_{\pmb{y}\in\X(\pmb{x}^*)} \pmb{c}^\tr\pmb{y} \right) \\
& \le \alpha\beta \left( \pmb{C}^\tr\pmb{x}^* +  \max_{\pmb{c}\in\cU} \min_{\pmb{y}\in\X(\pmb{x}^*)} \pmb{c}^\tr\pmb{y} \right)
\end{align}
\end{proof}

We can reformulate the bound from Theorem~\ref{th2stage} to an optimization problem that aims at finding a clustering that gives an optimal approximation guarantee.
\begin{align*}
\min\  & \alpha \cdot \beta\\
\text{s.t. } & \pmb{c}^i \le \alpha \sum_{k\in[K]} \mu_{ik} \hat{\pmb{c}}^k & \forall i\in[N] \\
& \hat{\pmb{c}}^k \le \beta \sum_{i\in[N]} \lambda_{ki} \pmb{c}^i & \forall k\in[K] \\
& \hat{\pmb{c}}^k \in \mathbb{R}^n_+ & \forall k\in[K] \\
& \alpha, \beta \ge 1 \\
& \mu_{ik} \in\{0,1\} & \forall i\in[N], k\in[K] \\
& \lambda_{ki} \in \{0,1\} & \forall k\in[K], i\in[N]
\end{align*}
Consider any solution of this problem, and consider the set of scenarios $\pmb{c}^i$ that are assigned to some $\hat{\pmb{c}}^k$, i.e., the set $\{i\in[N] : \mu_{ik} = 1\}$. Using the same arguments as in the single-scenario case on this set, we find that there exists a solution with the same approximation guarantee where $\hat{\pmb{c}}^k = \sum_{\ell} \lambda_{k\ell}\pmb{c}^\ell$. Hence, an optimal solution exists where $\mathcal{C}\subseteq\cU$. This allows us to reformulate the above problem to the following mixed-integer linear optimization problem:
\begin{align}
\max\ &t \label{ipstart}\\
\text{s.t. } & t \pmb{c}^i \le \sum_{\ell\in[N]} \mu_{i\ell} \pmb{c}^\ell & \forall i\in[N] \label{consimp}\\
& \mu_{i\ell} \le \lambda_\ell & \forall i\in[N], \ell\in[N] \label{ip2}\\
& \sum_{i\in[N]} \lambda_i = K \label{ip3}\\
& \sum_{\ell\in[N]} \mu_{i\ell} = 1& \forall i\in[N] \label{ip4}\\
& \lambda_i \in \{0,1\} & \forall i\in[N] \label{ip5}\\
& \mu_{i\ell} \in\{0,1\} & \forall i\in[N], \ell\in[N]\label{ip6} \\
& t \ge 0 \label{ipend}
\end{align}
Observe that the difference to problem IP-$\lambda$ is that variables $\mu$ are required to be binary. This allows us to further simplify the optimization problem. Consider any choice of $\mu$ variables and some fixed $i$. Let $\ell$ be the index for which $\mu_{i\ell} = 1$. Constraints~\eqref{consimp} then become 
$t c^i_j \le c^\ell_j$ for all $j\in[n]$. We would like to increase $t$ as far as possible. Let $d_{i\ell} = \min_{j\in[n]} c^\ell_j / c^i_j$. Constraints~\eqref{consimp} can thus be substituted by
\begin{equation}\label{conbetter}
t \le \sum_{\ell\in[N]} d_{i\ell}\mu_{i\ell} \qquad \forall i\in[N]
\end{equation}
This way, we do not only reduce the number of constraints. A further advantage is that we can now relax variables $\mu$: there is always an optimal solution where the largest amongst the $d_{i\ell}$ values is assigned to scenario $i\in[N]$. Note that this is not the case for formulation~(\ref{ipstart}-\ref{ipend}). The problem
\begin{align*}
\max\ &t \\
\text{s.t. } & (\ref{ip2}-\ref{ip5}), \eqref{ipend}, \eqref{conbetter} \\
& \mu_{i\ell} \ge 0 & \forall i\in[N], \ell\in[N]\\
\end{align*}
is referred to simply as IP in the following.

As in the one-stage case, it is possible to calculate approximation guarantees that are independent of the underlying optimization problem. In Figure~\ref{fig:ex2st} we calculate the resulting average approximation guarantees using the same uniformly and normally distributed data as was used in Figure~\ref{fig:ex1st}. 
\begin{figure}[htb]
\begin{center}
\subfigure[Uniform data.]{\includegraphics[width=.48\textwidth]{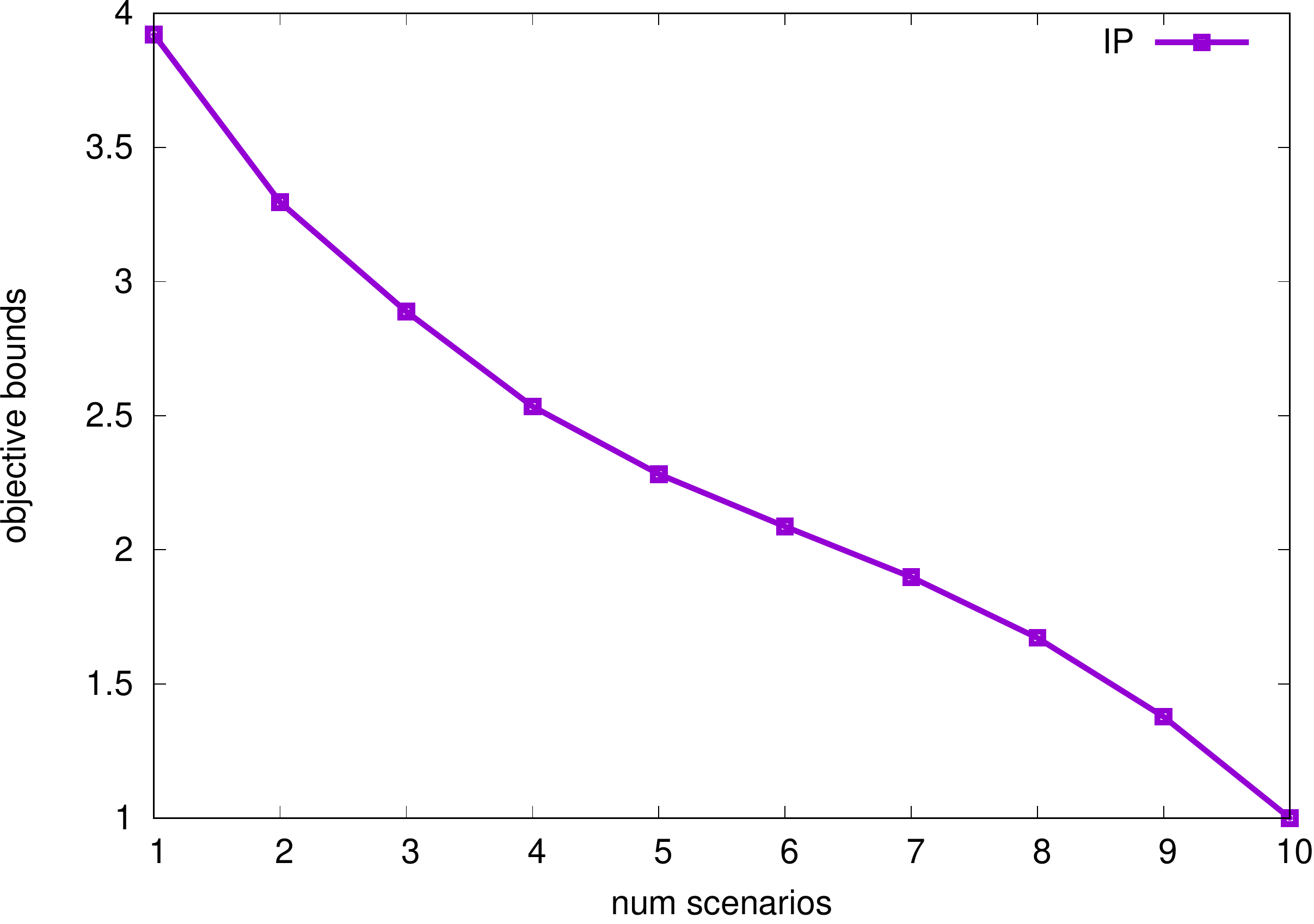}}\hfill
\subfigure[Normal data.]{\includegraphics[width=.48\textwidth]{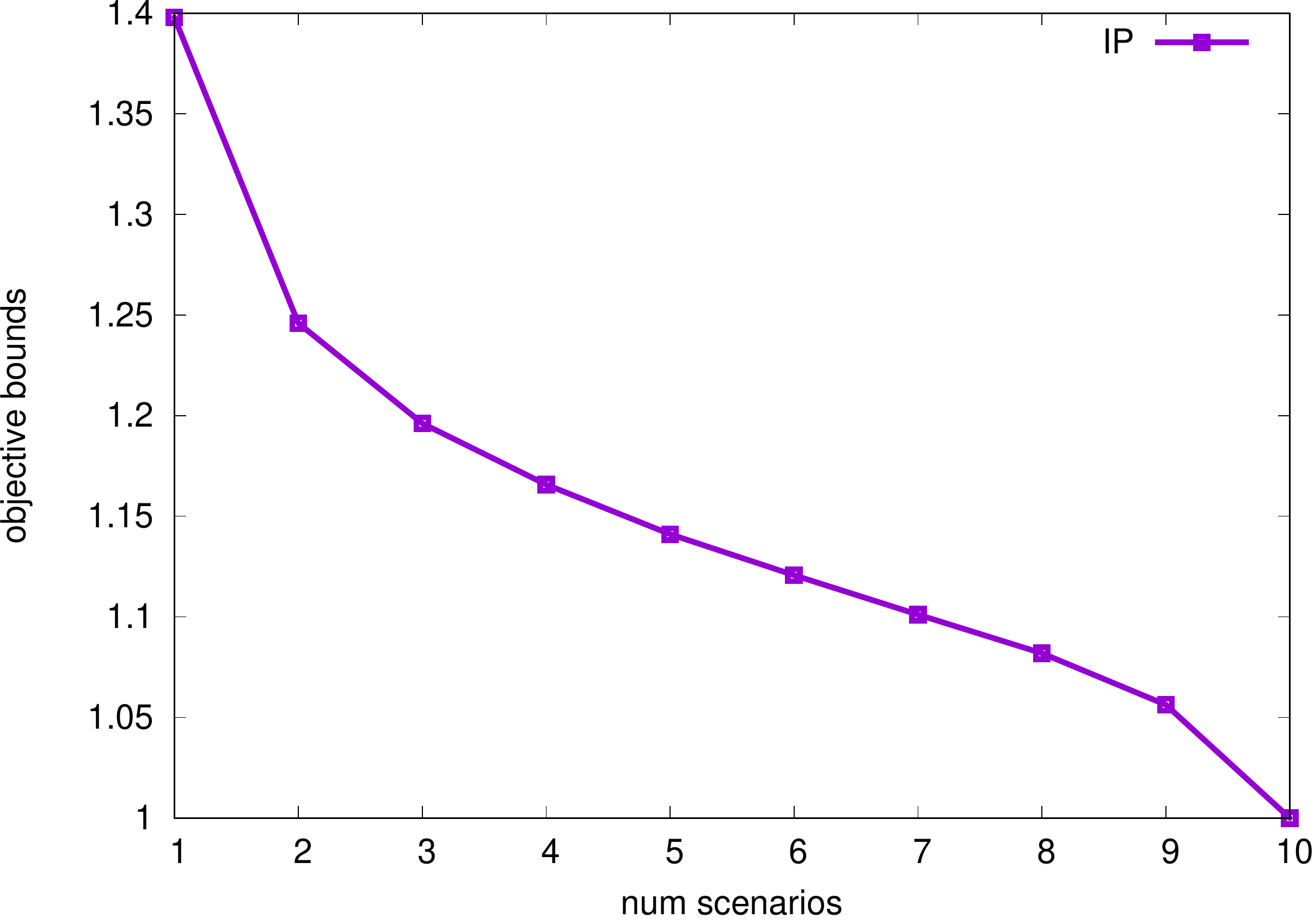}}
\end{center}
\caption{Approximation ratios for randomly generated data, two-stage problems.}\label{fig:ex2st}
\end{figure}
As any solution feasible to IP is also feasible to IP-$\lambda$ and Cont, it is natural that approximation guarantees have become larger. This reflects the intuition that two-stage problems are harder to approximate than one-stage problems. Still, guarantees remain meaningful, in particular for normally distributed data and when considering that no algorithm with any a-priori guarantee exists. Interestingly, the reduced degrees of freedom in the design of scenario clusterings also mean the clustering problems are easier to solve. We conclude this section with the observation that the clustering problem remains NP-hard, though; the proof of this result is found in the appendix.

\begin{theorem}
The decision version of IP is NP-complete.
\end{theorem}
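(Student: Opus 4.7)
The plan is to show membership in NP via a simple verifier, and to reduce from Dominating Set for the hardness part.

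For NP membership, given a guessed $\lambda\in\{0,1\}^N$ with $\sum_i\lambda_i=K$, note that once $\lambda$ is fixed the inner maximization over $\mu$ and $t$ is solved in closed form: the constraints $\sum_\ell\mu_{i\ell}=1$, $\mu\ge 0$, and $\mu_{i\ell}\le\lambda_\ell$ force each row $\mu_{i\cdot}$ to be a distribution supported on $S:=\{\ell:\lambda_\ell=1\}$, so the scalar constraint $t\le\sum_\ell d_{i\ell}\mu_{i\ell}$ is tightened by concentrating the mass on $\arg\max_{\ell\in S}d_{i\ell}$. Hence $t^\ast=\min_{i\in[N]}\max_{\ell\in S}d_{i\ell}$, computable in polynomial time after precomputing the $d_{i\ell}$, so verifying $t^\ast\ge T$ is polynomial.

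For hardness I reduce from Dominating Set. Given a graph $G=(V,E)$ with $V=\{v_1,\ldots,v_N\}$ and integer $K$, the goal is to construct scenarios $\pmb{c}^1,\ldots,\pmb{c}^N\in\mathbb{R}^{n'}_+$ together with a threshold $T$ such that $d_{i\ell}\ge T$ if and only if $v_\ell\in N_G[v_i]$; then a feasible IP solution with $t\ge T$ and $K$ chosen indices is exactly a dominating set of size $K$ in $G$. Fix any $r>1$ and set $T:=1/r$. For every ordered pair $(i,\ell)$ with $i\ne\ell$ and $v_\ell\notin N_G[v_i]$, introduce a dedicated coordinate $j_{i,\ell}$ and define
\[
c^m_{j_{i,\ell}}:=\begin{cases}1/r^2 & \text{if } m=\ell,\\ 1/r & \text{if } m\ne\ell \text{ and } v_\ell\in N_G[v_m],\\ 1 & \text{otherwise.}\end{cases}
\]
There are at most $N(N-1)$ such coordinates, so the instance has polynomial size and is constructible in polynomial time.

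The technical core of the proof is a case analysis showing that the construction is exactly calibrated. On the one hand, for every non-neighbor pair $(i,\ell)$ the dedicated coordinate $j_{i,\ell}$ gives the ratio $c^\ell_{j_{i,\ell}}/c^i_{j_{i,\ell}}=1/r^2<T$, so immediately $d_{i\ell}<T$. On the other hand, for every pair $(m,m')$ with $v_{m'}\in N_G[v_m]$ and every coordinate $j_{i,\ell}$, a short case distinction on whether $m'=\ell$ (resp.\ $m=\ell$) and whether $v_\ell\in N_G[v_{m'}]$ (resp.\ $v_\ell\in N_G[v_m]$) shows that $c^{m'}_{j_{i,\ell}}$ is never more than one step below $c^m_{j_{i,\ell}}$ on the scale $\{1,1/r,1/r^2\}$, so the ratio is always at least $1/r=T$. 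The main obstacle lies precisely in this second direction: the three value levels must be tuned so that auxiliary coordinates introduced to kill a single non-cover ratio do not accidentally kill any cover ratio elsewhere, which is exactly why the scale $\{1,1/r,1/r^2\}$ succeeds while a naive two-level $\{1,\epsilon\}$ scale only realizes transitive (preorder) relations and hence cannot encode Dominating Set. Combining the two directions, $d_{i\ell}\ge T$ iff $v_\ell\in N_G[v_i]$, and the IP decision problem on this constructed instance is equivalent to Dominating Set on $G$ with parameter $K$, yielding the desired NP-completeness.
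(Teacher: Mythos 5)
Your proof is correct, but it takes a genuinely different route from the paper's. The paper has no dedicated reduction for IP: it proves hardness of Cont via a reduction from Dominating Set using $2n$ scenarios in dimension $n$ (unit-like scenarios with a single entry $16$ plus vertex scenarios with entries in $\{9,12\}$), where most of the work goes into showing that convex combinations cannot help, so that an optimal solution with binary $\mu$ and $\lambda$ exists; a corollary then transfers that same instance to IP-$\mu$, IP-$\lambda$ and IP. You instead reduce Dominating Set directly to IP, exploiting the closed form $t^*=\min_{i\in[N]}\max_{\ell\in S}d_{i\ell}$ and encoding the closed-neighborhood relation into the ratio matrix $(d_{i\ell})$ via one dedicated coordinate per non-adjacent ordered pair and the three-level scale $\{1,1/r,1/r^2\}$. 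Your case analysis is sound: a ratio below $1/r$ requires dropping two levels, which forces $m'=\ell$ together with $v_\ell\notin N_G[v_m]$ and thus contradicts $v_{m'}\in N_G[v_m]$; and your observation that a two-level scale only realizes transitive containment relations correctly explains why the third level is necessary. What the paper's approach buys is one construction covering all four problem variants simultaneously; what yours buys is a self-contained, purely combinatorial argument tailored to IP, with strictly positive scenario entries (avoiding the division-by-zero convention implicitly needed when applying $d_{i\ell}=\min_j c^\ell_j/c^i_j$ to the paper's zero-valued scenarios) and an explicit NP-membership verifier, which the paper leaves implicit. The only cosmetic caveat is the degenerate case where every vertex dominates every other, so that your instance has no coordinates at all; adding a single all-ones dummy coordinate disposes of it.
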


\section{Experiments}
\label{sec:experiments}

We conduct two types of experiments. In the first experiment, we compare the quality of aggregation when using Cont with that of K-means on different types of data. Here, quality is measured as correlation with robust objective values with respect to the original uncertainty set. In the second experiment, we consider specific optimization problems and assess the quality of aggregation using the resulting robust solutions.

\subsection{Experiment 1}

The reduced scenario set $\C$ is supposed to represent the set $\cU$ as well as possible for robust optimization problems with the worst-case objective. Hence, we would like the worst-case objective value $\max_{\pmb{c}\in\C} \pmb{c}^\tr\pmb{x}$ to be as closely correlated to $\max_{\pmb{c}\in\cU} \pmb{c}^\tr\pmb{x}$ as possible for any $\pmb{x}\in\mathbb{R}^n_+$. Observe that it is not relevant if the objective values with respect to $\C$ and with respect to $\cU$ are similar, as we can scale an uncertainty set with an arbitrary factor $\lambda>0$ and find the same robust solution. If the objective value with respect to $\C$ is perfectly correlated to the original objective value, then we will rank the performance of robust solutions in the same way.

In this experiment, we fix $n=10$, $N=100$ and $K=5$, that is, our aim is to reduce a problem with 100 scenarios down to 5 scenarios. We use Cont (with the best out of 10 repetitions and a limit of 20 on the number of iterations per repetition) as well as K-means (with the best out of 1000 repetitions). We consider four different types of scenarios: In $\cU_1$, each scenario coefficient $c^i_j$ is chosen uniformly and independently from $\{1,\ldots,100\}$. For $\cU_2$, we first create scenarios in the same way, but each scenario has a $5\%$ probability to be multiplied with a factor 2. This way, we create a uniform uncertainty set that contains some outliers. For $\cU_3$, we follow the idea of budgeted uncertainty. Each item $j$ has a nominal cost $c_j$ and a deviation $d_j$ chosen uniformly from $\{1,\ldots,100\}$. To generate a scenario, each item is given its nominal cost $c_j$, except for three items chosen at random, which have costs $c_j+d_j$. Finally, for $\cU_4$ we create a uniform uncertainty set as in $\cU_1$, but normalize all scenarios with respect to the 2-norm, so that they lie on a sphere. We then multiply each scenario with a random value in $[0.9,1.1]\cdot10^4$ to create $\cU_4$.

For each type, we create 50 uncertainty sets. For each set, we sample 100 random vectors $\pmb{x}\in\mathbb{R}^n_+$, by choosing $x_j\in[0,1]$ uniformly and independently. We calculate the corresponding robust objective values for the original set, for Cont, and for K-means, which gives a total of 5000 data points for each method.

In Figure~\ref{fig:corr}, we show the resulting robust objective values. Note that both Cont and K-means choose scenarios in the convex hull of $\cU$. Hence, objective values will always be not larger than objective values with respect to $\cU$.  Underneath each plot, we give the Pearson correlation value $\rho$.

\begin{figure}[htbp]
\begin{center}
\subfigure[$\cU_1$, Cont, $\rho=98.2\%$\label{fig:exp1a}]{\includegraphics[width=0.248\textwidth]{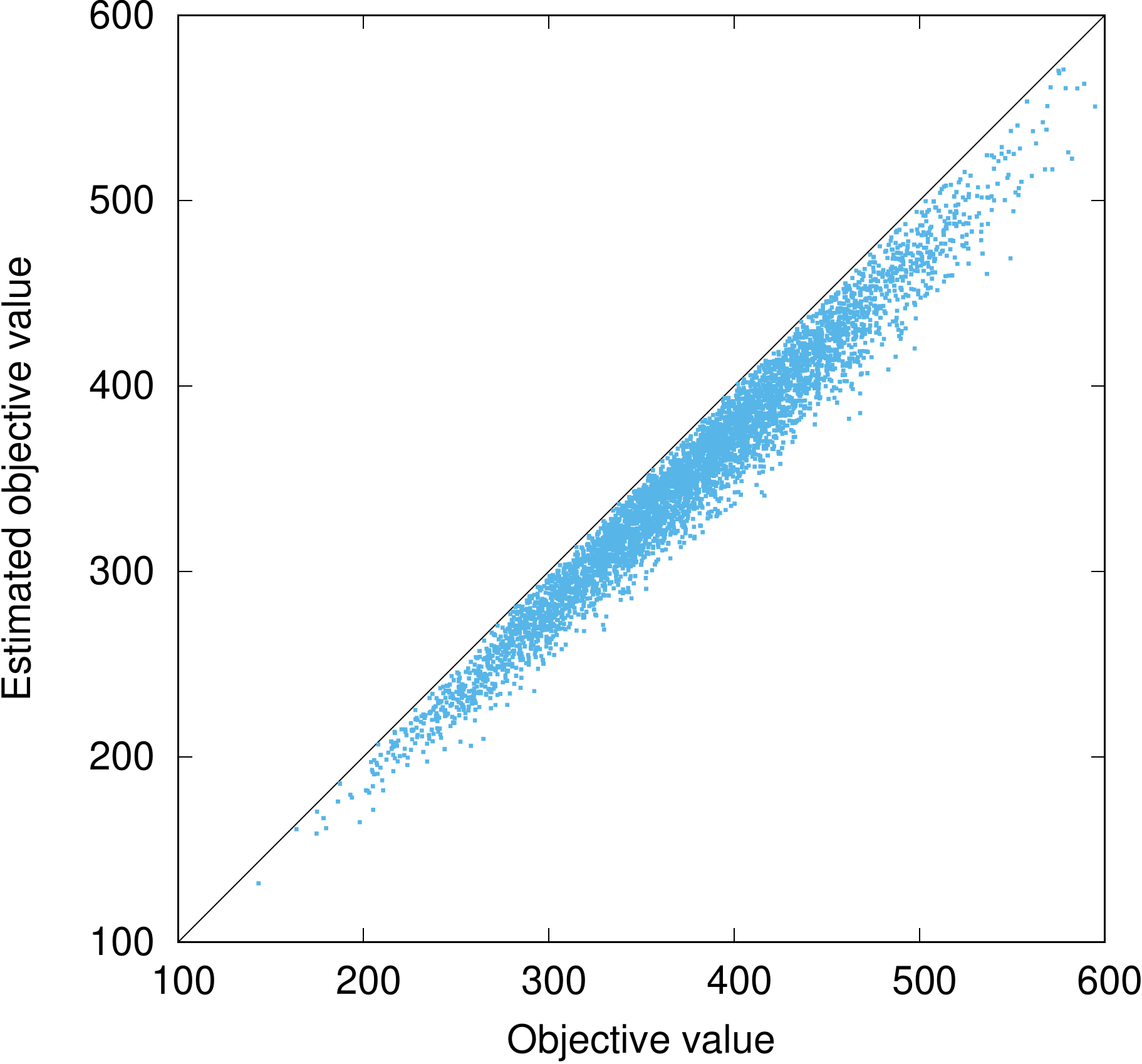}}%
\subfigure[$\cU_1$, KM, $\rho=95.5\%$\label{fig:exp1b}]{\includegraphics[width=0.248\textwidth]{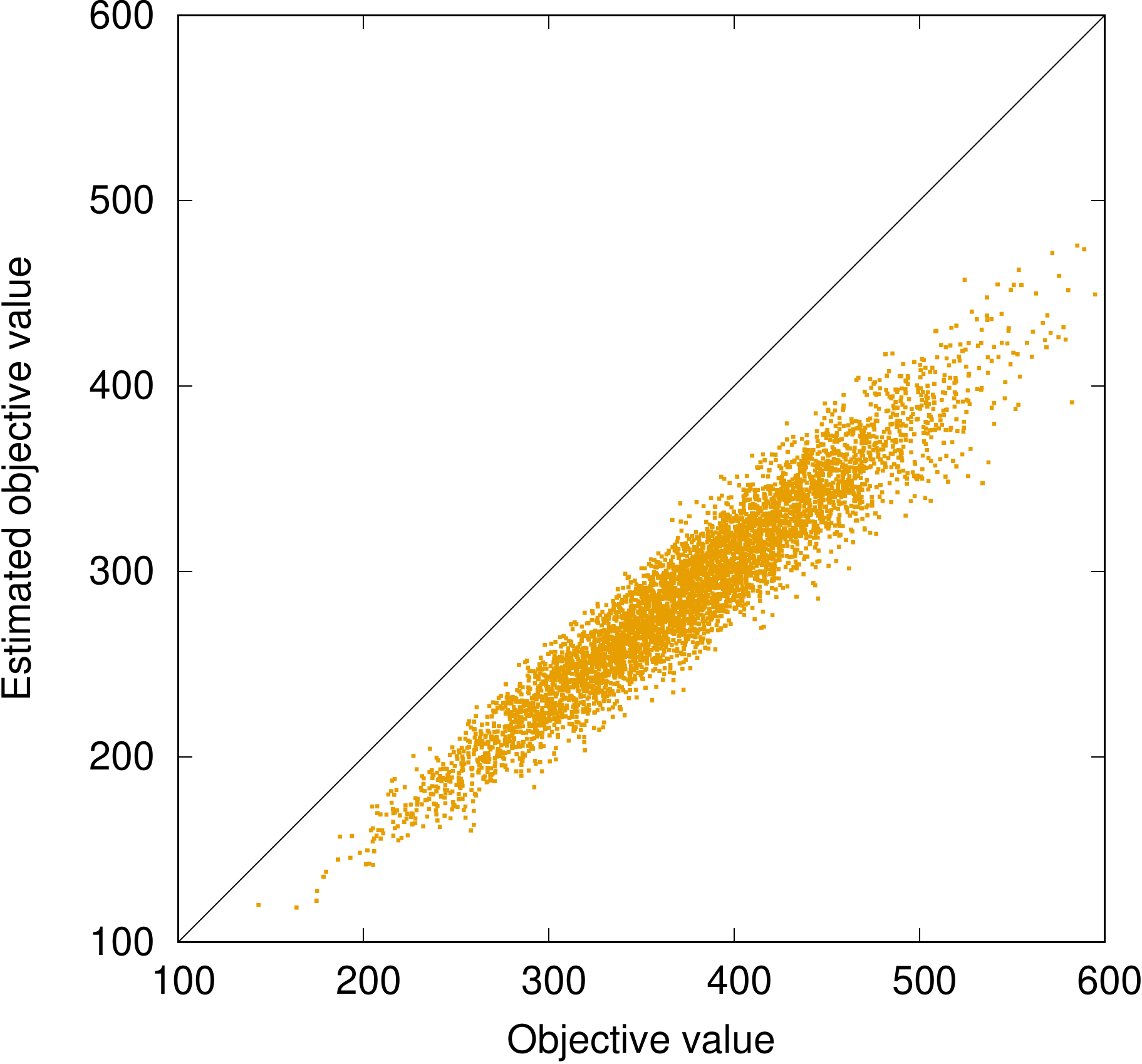}}
\subfigure[$\cU_2$, Cont, $\rho=99.1\%$\label{fig:exp1c}]{\includegraphics[width=0.248\textwidth]{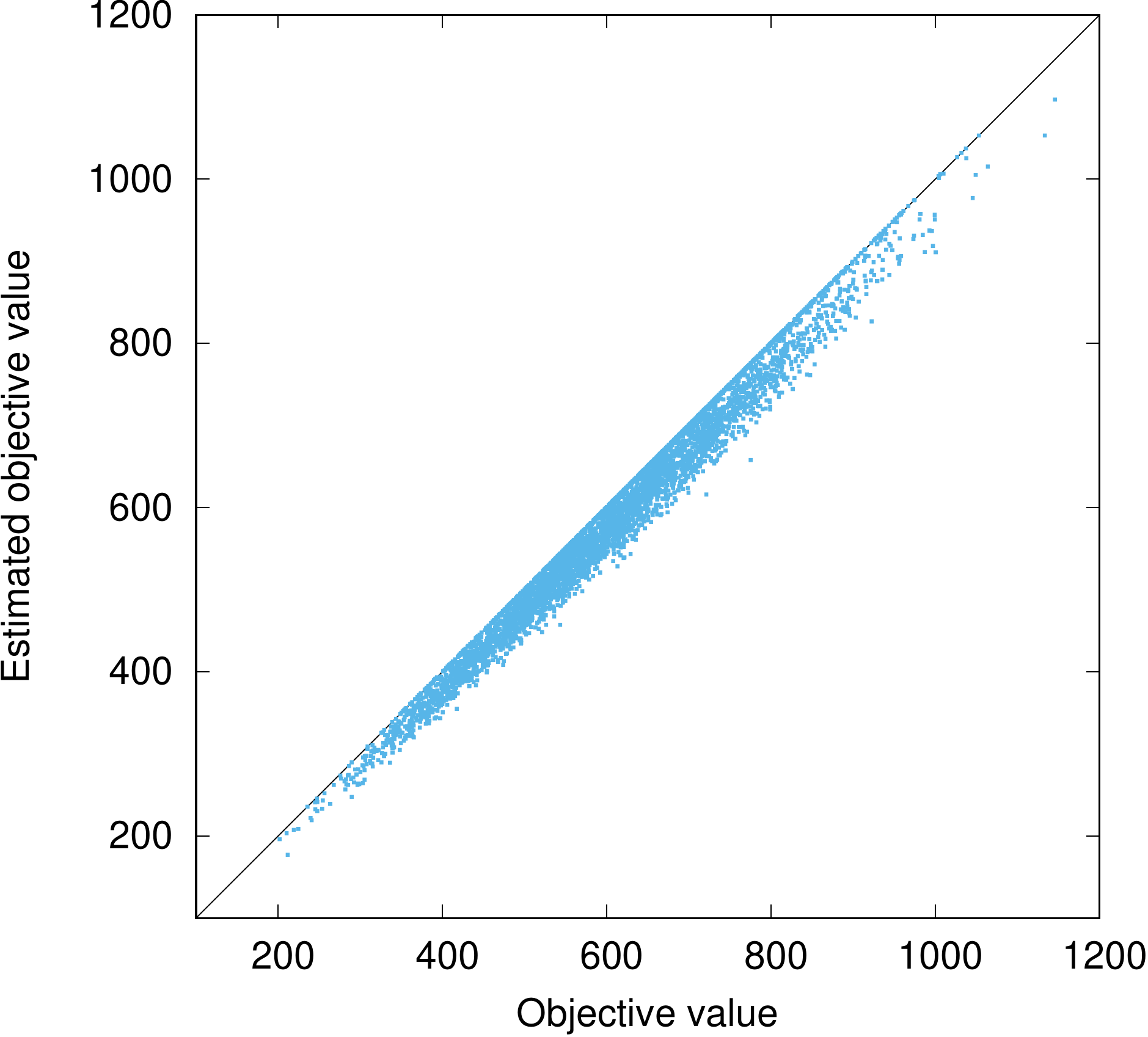}}%
\subfigure[$\cU_2$, KM, $\rho=86.1\%$\label{fig:exp1d}]{\includegraphics[width=0.248\textwidth]{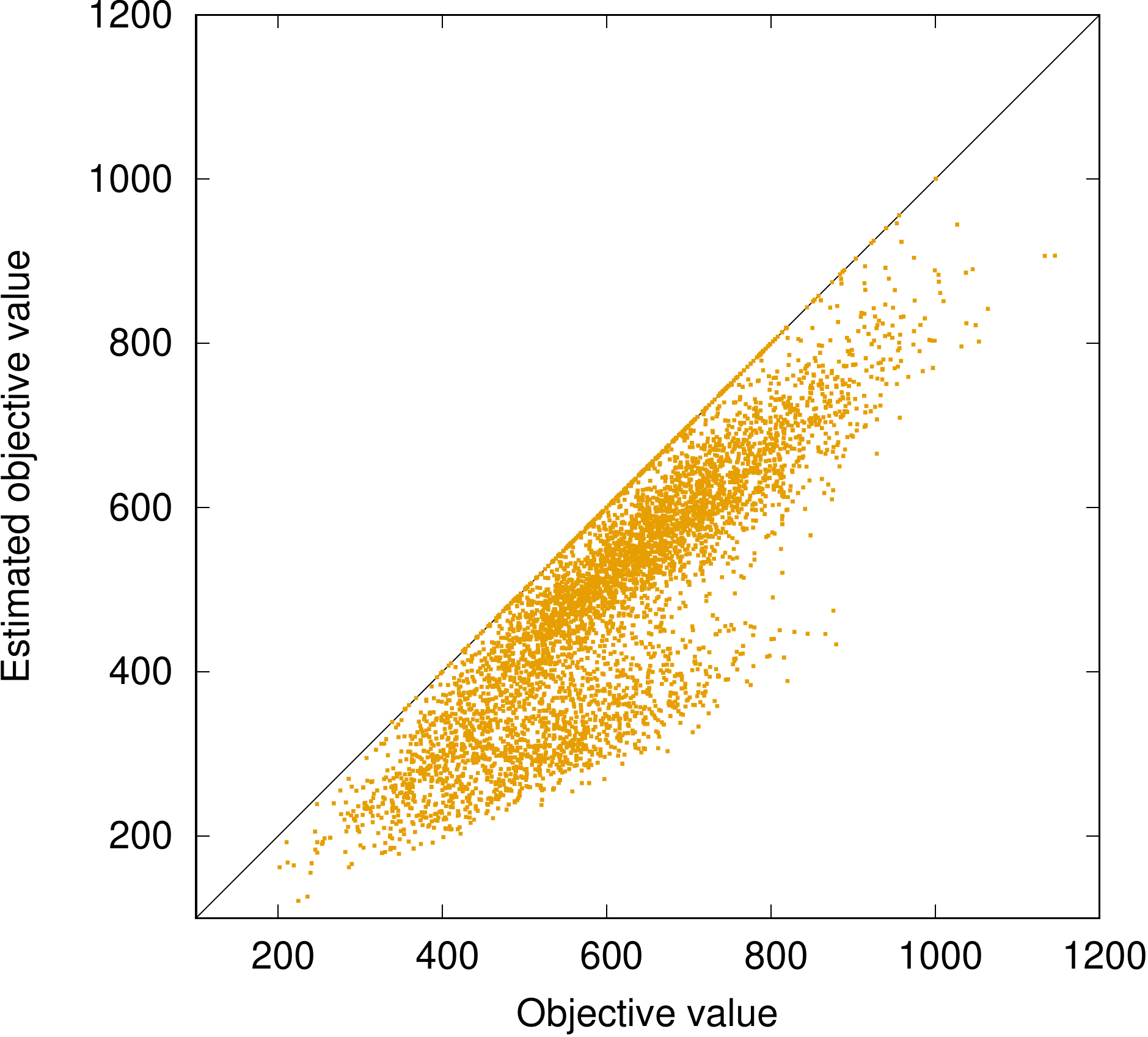}}

\subfigure[$\cU_3$, Cont, $\rho=98.6\%$\label{fig:exp1e}]{\includegraphics[width=0.248\textwidth]{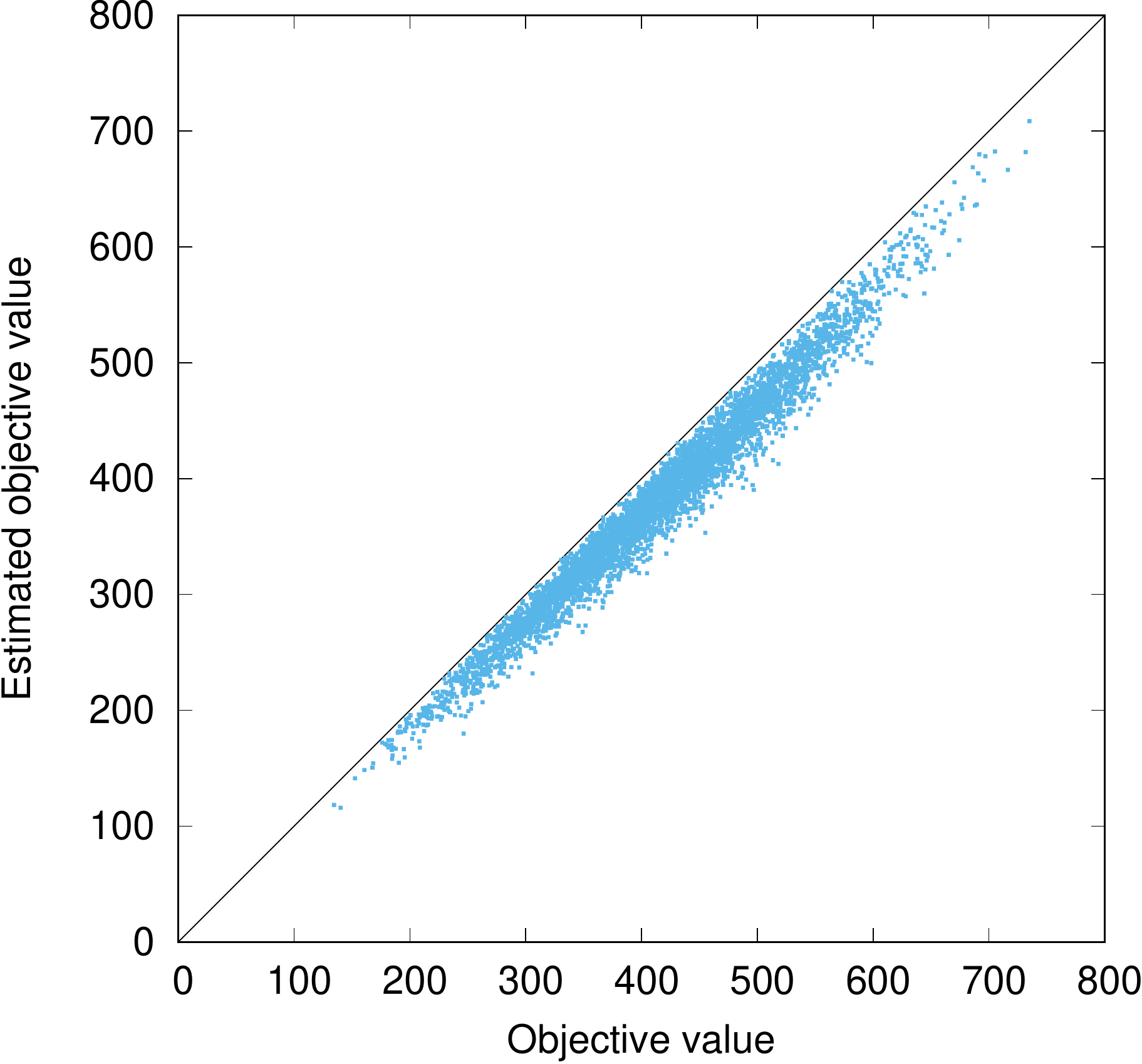}}%
\subfigure[$\cU_3$, KM, $\rho=98.2\%$\label{fig:exp1f}]{\includegraphics[width=0.248\textwidth]{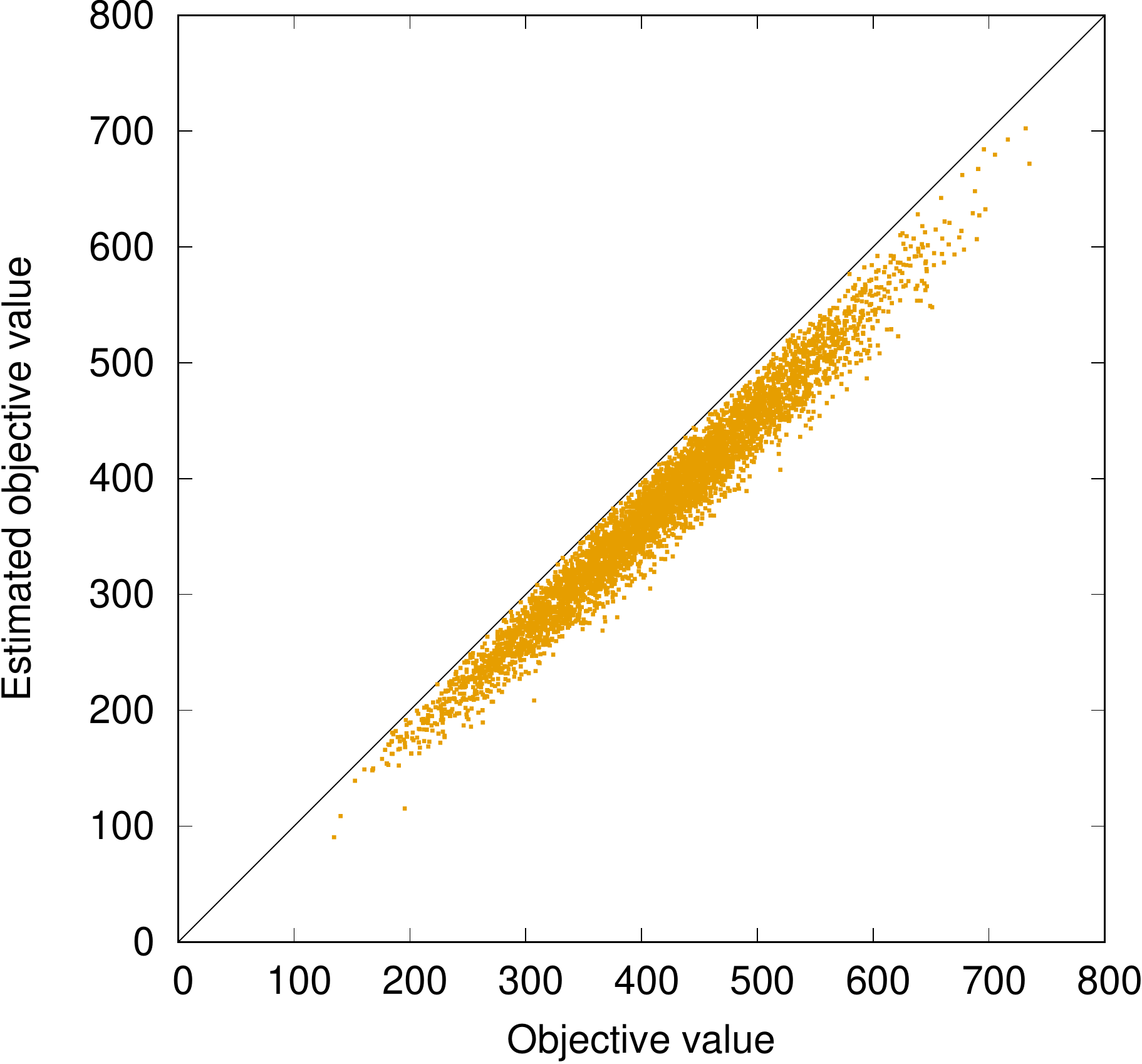}}
\subfigure[$\cU_4$, Cont, $\rho=97.0\%$\label{fig:exp1g}]{\includegraphics[width=0.248\textwidth]{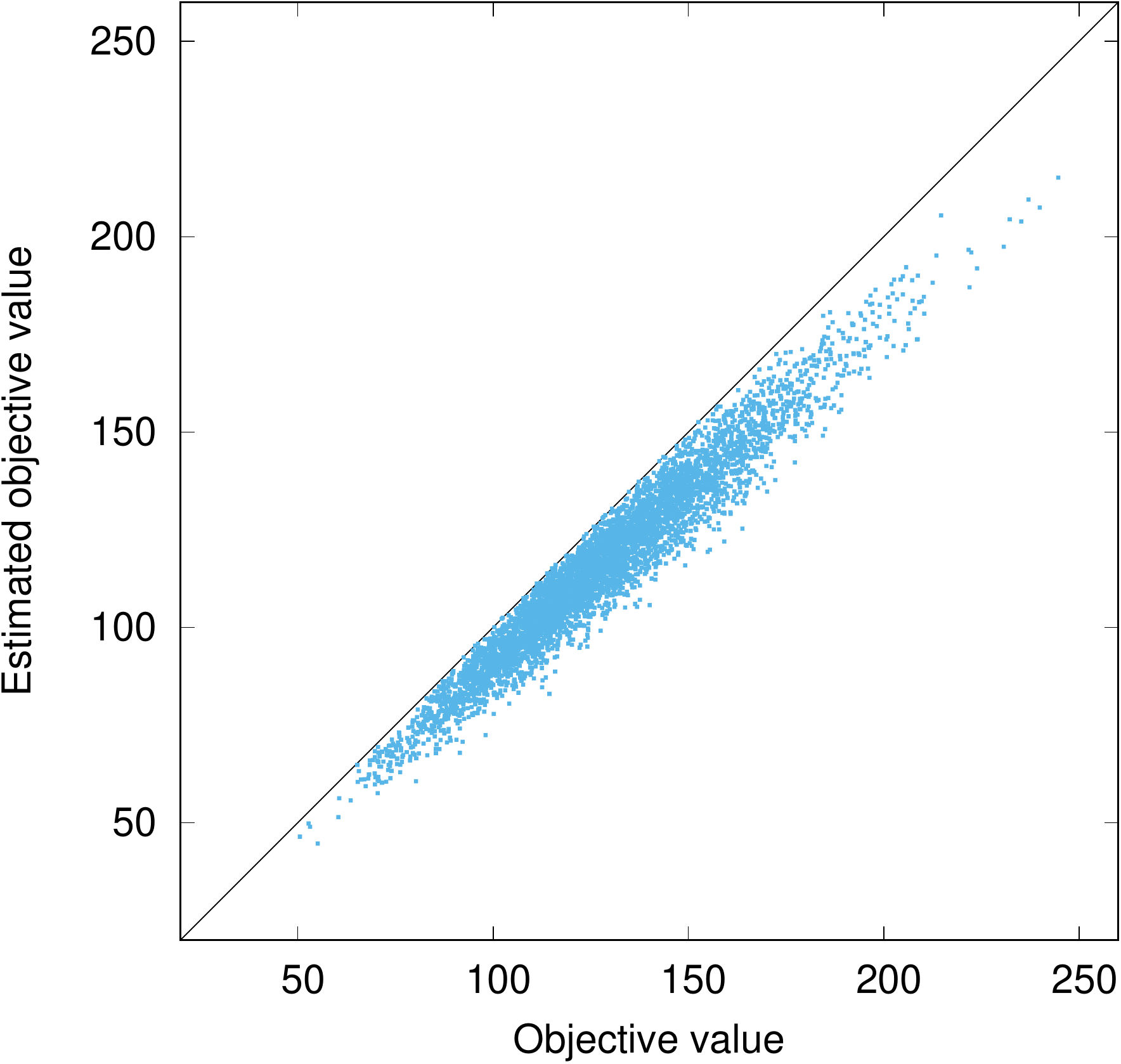}}%
\subfigure[$\cU_4$, KM, $\rho=83.0\%$\label{fig:exp1h}]{\includegraphics[width=0.248\textwidth]{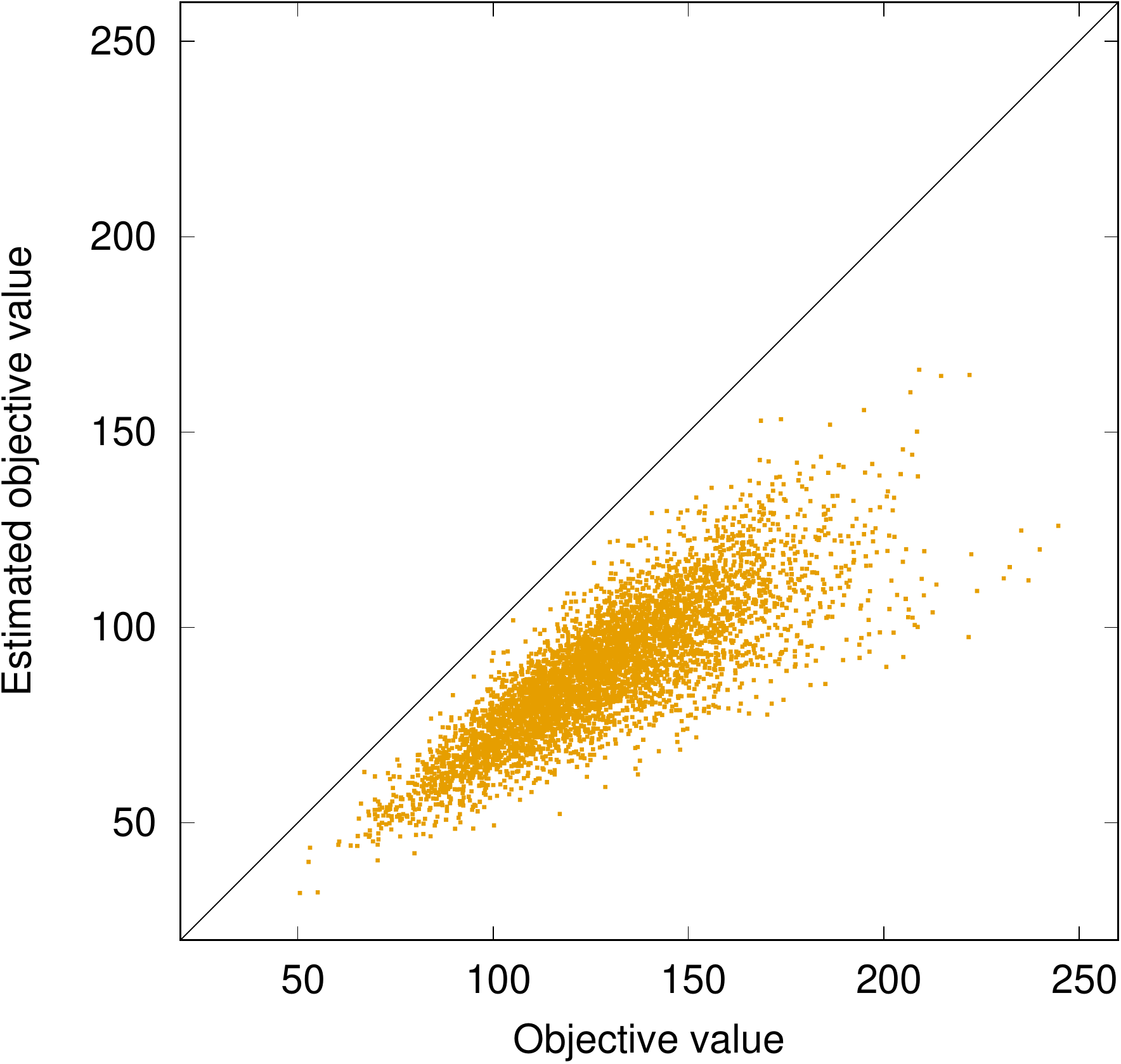}}
\caption{Experiment 1, correlation between estimated and actual objective values. KM stands for K-means.}\label{fig:corr}
\end{center}
\end{figure}

First consider Figures~\ref{fig:exp1a} and \ref{fig:exp1b} which show the case of uniform uncertainty sets $\cU_1$. Using Cont, we reach a correlation of $98.2\%$, while K-means results in a correlation of $95.5\%$. Both methods give a reasonable approximation of robust costs, while the spread for K-means is larger, and performance thus slightly worse. If we add in outliers (see Figures~\ref{fig:exp1c} and \ref{fig:exp1d}), we observe that the performance of Cont becomes better (increasing the correlation to $99.1\%$), as these outliers tend to dominate the calculation of robust objective values. At the same time, performance of K-means declines down to a correlation of $86.1\%$.

Budgeted uncertainty sets $\cU_3$ can be well-approximated with both methods (see Figures~\ref{fig:exp1e} and \ref{fig:exp1f}) with correlation values $98.6\%$ for Cont and $98.2\%$ for K-means, respectively. Finally, consider scenarios along the 2-ball which we generate for $\cU_4$ (see Figures~\ref{fig:exp1g} and \ref{fig:exp1h}). Intuitively, such scenarios should be hard to approximate, as they are spread out evenly and of similar size. Indeed, we find that Cont and K-means have the lower correlation values amongst all uncertainty set with $97.0\%$ for Cont and $83.0\%$ for K-means.

These results show that Cont gives a better approximation of robust objective values than K-means on all settings that we considered. In some cases ($\cU_1$ and $\cU_3$), both methods give good results. Our method becomes particularly useful if there are outliers in the data ($\cU_1$) or if the data is structured along the 2-ball ($\cU_4$). These results are independent of the underlying optimization problem.

\subsection{Experiment 2}

\subsubsection{Setup}

While our scenario reduction methods give guarantees on the performance of the resulting solutions (see Figures~\ref{fig:ex1st} and \ref{fig:ex2st}), it is possible (and likely) that the actual objective values of the resulting solutions are closer to the optimal objective values than the guarantees suggest. To this end, we conduct experiments on one- and two-stage robust optimization problems to analyze the performance of the resulting robust solutions in more detail.

Scenario data $c^i_j$ is always generated independently and uniformly from $\{1,\ldots,100\}$. For the two-stage problems, we generate first-stage costs in the same way. Note that we do not include outliers in this experiment, which would likely give our scenario reduction methods a significant advantage over K-means according to the results of the previous experiment.

Our methods can be applied to any set of feasible solutions $\X$, provided that all variables remain non-negative. The first problem we consider is the selection problem where the set of feasible solutions is given as
\[ \X = \{ \pmb{x}\in\{0,1\}^n : \sum_{i\in[n]} x_i = p\} \]
for some integer $p$. We always set $p=n/2$. The second problem is the vertex cover problem, where we are given an undirected graph $G=(V,E)$ and want to choose a subset $S$ of nodes with minimum sum of node costs such that each node $i\in V$ is either in $S$ or has a neighbor in $S$. That is, the set of feasible solutions is given as
\[ \X = \{ \pmb{x}\in\{0,1\}^n : x_i + \sum_{ \{i,j\} \in E} x_j \ge 1 \ \forall i\in V\} \]
To generate graphs, we create each of the possible $n(n-1)/2$ edges independently with a probability of $10/n$, such that on average, each node has approximately 10 neighbors. We consider the selection and the vertex cover problem, as the former has only a single constraint and therefore allows arbitrary item combinations, while the latter problem has more constraints and thus has more complex interactions between items regarding feasibility.

We consider smaller ($n=20$) as well as larger ($n=150$) problems in combination with fewer ($N=10$) and more ($N=50$) scenarios. For each combination of $n$ and $N$ and each type of optimization problem, we generate 250 instances and always average results. Each instance is solved using all scenarios to find the optimal objective value (or an estimate due to time limits). Additionally, we use $K$ from 1 to 10 to calculate approximate solutions with the methods proposed in this paper. For each solution calculated this way, we evaluate the actual objective value with respect to the full uncertainty set and form the ratio between this value and the optimal value. As a comparison method, we use K-means, as it is one of the most widely applied clustering methods.

Our code is implemented in C++ and available online\footnote{\url{https://github.com/goerigk/robust-scenario-reduction-code/}}. All optimization problems are solved using CPLEX 12.8. Each optimization problem in CPLEX is limited to a time limit of 60 seconds. We use an Intel Xeon Gold 5220 CPU running at 2.20GHz, and restrict each method to a single thread.

\subsubsection{Results on One-Stage Robust Optimization}
\label{subsec:expone}

We compare the performance of our methods IP-$\mu$, IP-$\lambda$ and Cont with that of a K-means clustering approach. Cont is repeated 10 times, while K-means is repeated 1000 times. After the repetitions, the best result is used (according to the approximation guarantee when using Cont, and sum of squared distances when using K-means). To limit the computation time of Cont, we restrict reach repetition to three iterations (i.e., from the random starting solution, we optimize three times for $\mu$ and three times for $\lambda$).

We first show aggregation times in Figure~\ref{fig:1st-agg}. Note the logarithmic vertical axis. As K-means avoids the solution of any optimization problem with CPLEX, it has the clear advantage of being the fastest clustering method. IP-$\mu$ is usually slower than IP-$\lambda$. While Cont has the advantage that it only solves linear programs, it needs to do so repeatedly, which leads to aggregation times that are close to 100 seconds on average on the very largest instances.

\begin{figure}[htbp]
\begin{center}
\subfigure[$n=20$, $N=10$]{\includegraphics[width=0.3\textwidth]{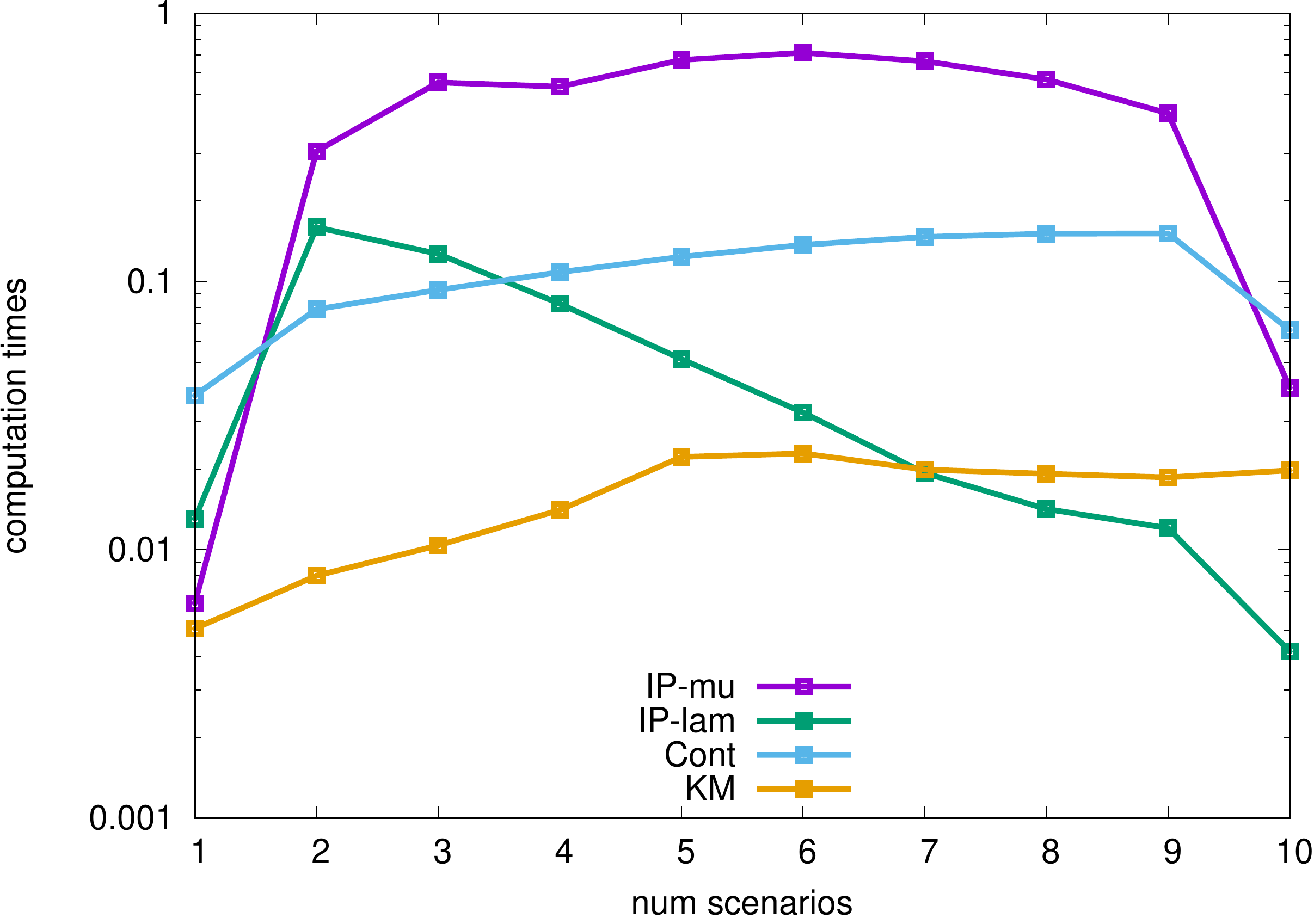}}%
\subfigure[$n=150$, $N=10$]{\includegraphics[width=0.3\textwidth]{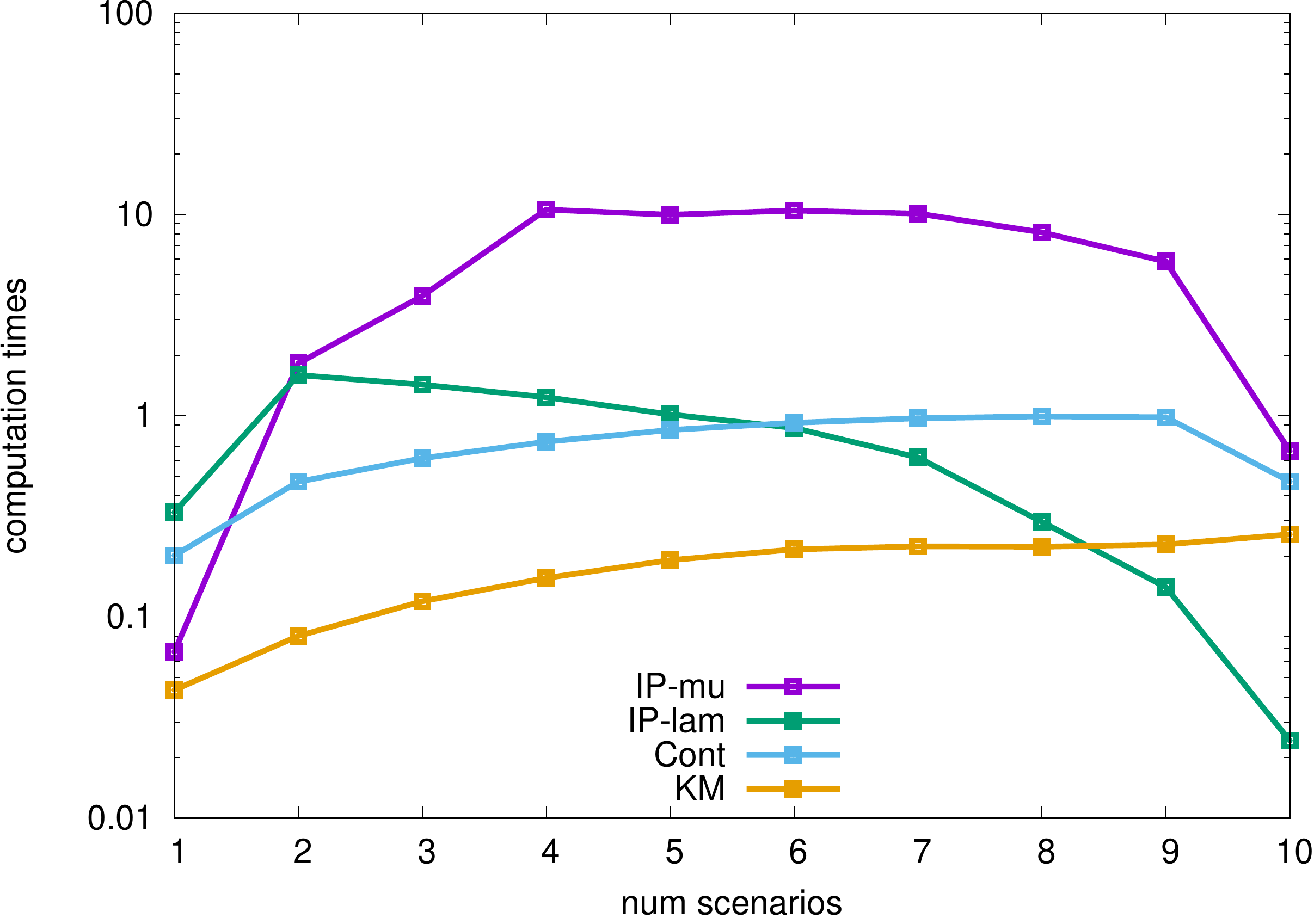}}
\subfigure[$n=20$, $N=50$]{\includegraphics[width=0.3\textwidth]{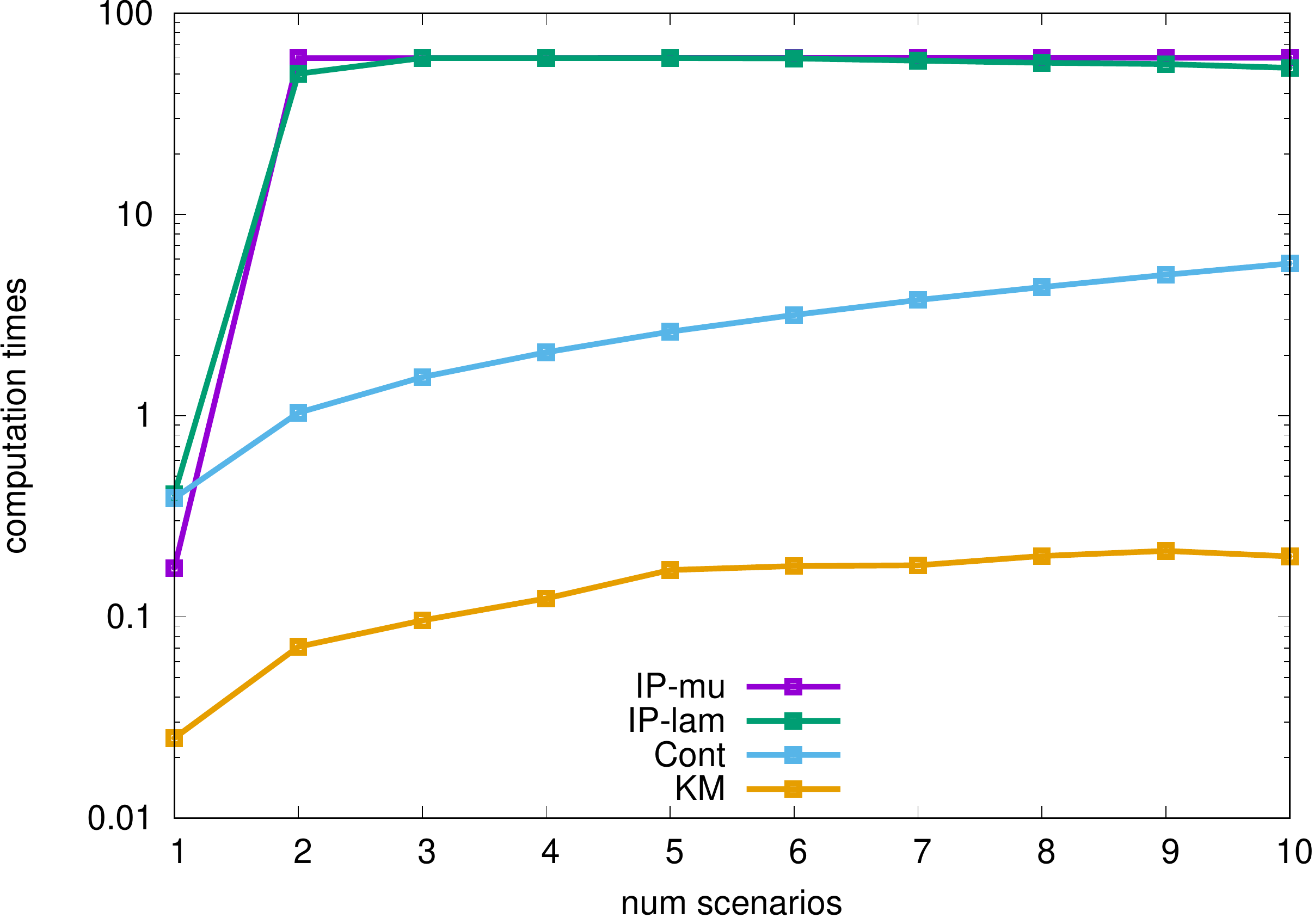}}%
\subfigure[$n=150$, $N=50$]{\includegraphics[width=0.3\textwidth]{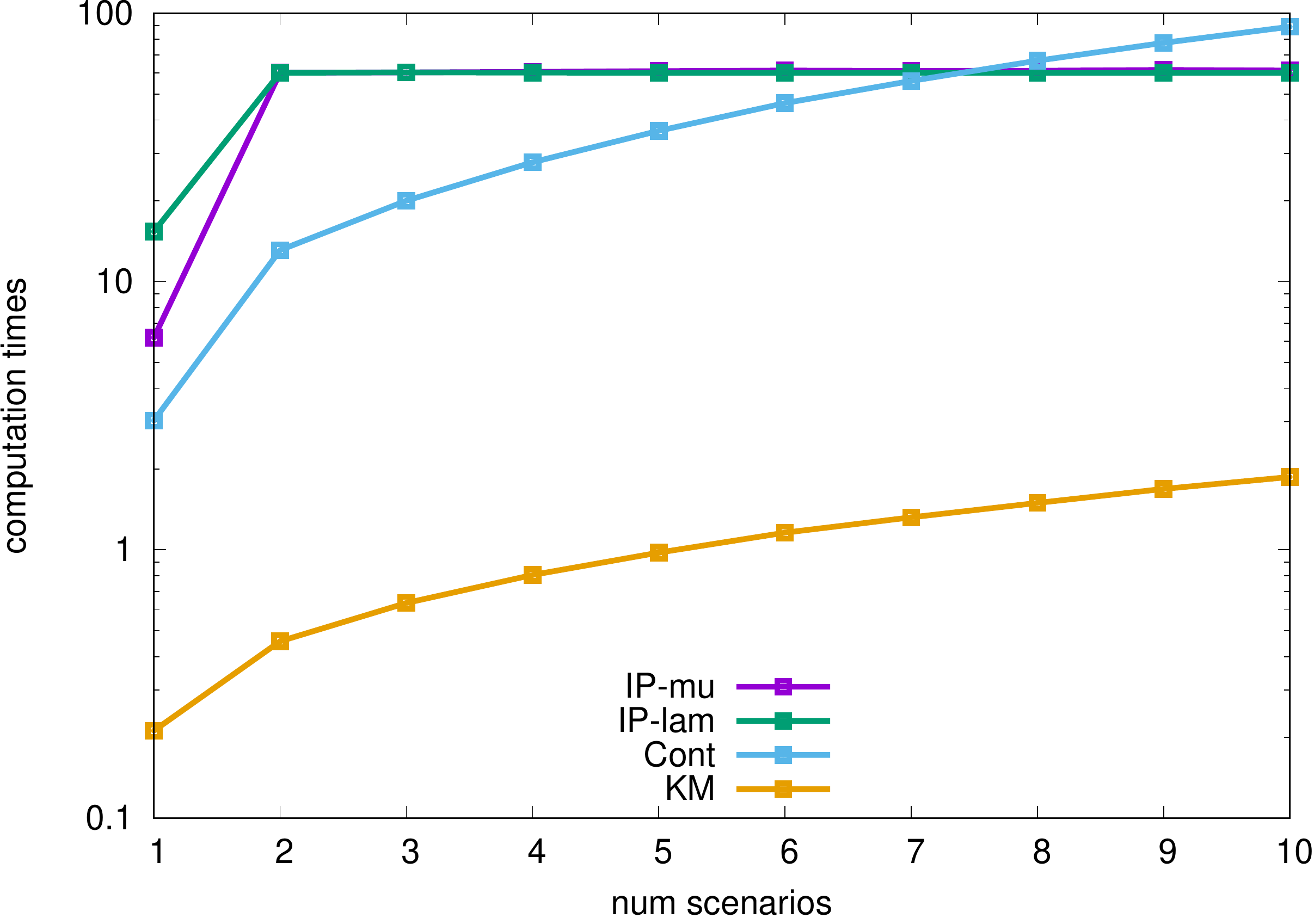}}
\end{center}
\caption{One-stage aggregation times.}\label{fig:1st-agg}
\end{figure}

We now consider the average ratio between the objective value of the robust solution found after scenario reduction, and the optimal robust solution with respect to the original uncertainty set. In Figure~\ref{fig:1st-obj-sel}, we present these values for the selection problem. For problems with $N=10$, IP-$\mu$ and Cont give better results than K-means the majority of cases. Note that Cont is sometimes outperformed by IP-$\mu$ or IP-$\lambda$, which is due to the iteration limit for each run of the method. On problems with $N=50$, there is a distinct advantage of Cont over K-means that increases with the number of cluster centers $K$.

\begin{figure}[htbp]
\begin{center}
\subfigure[$n=20$, $N=10$]{\includegraphics[width=0.3\textwidth]{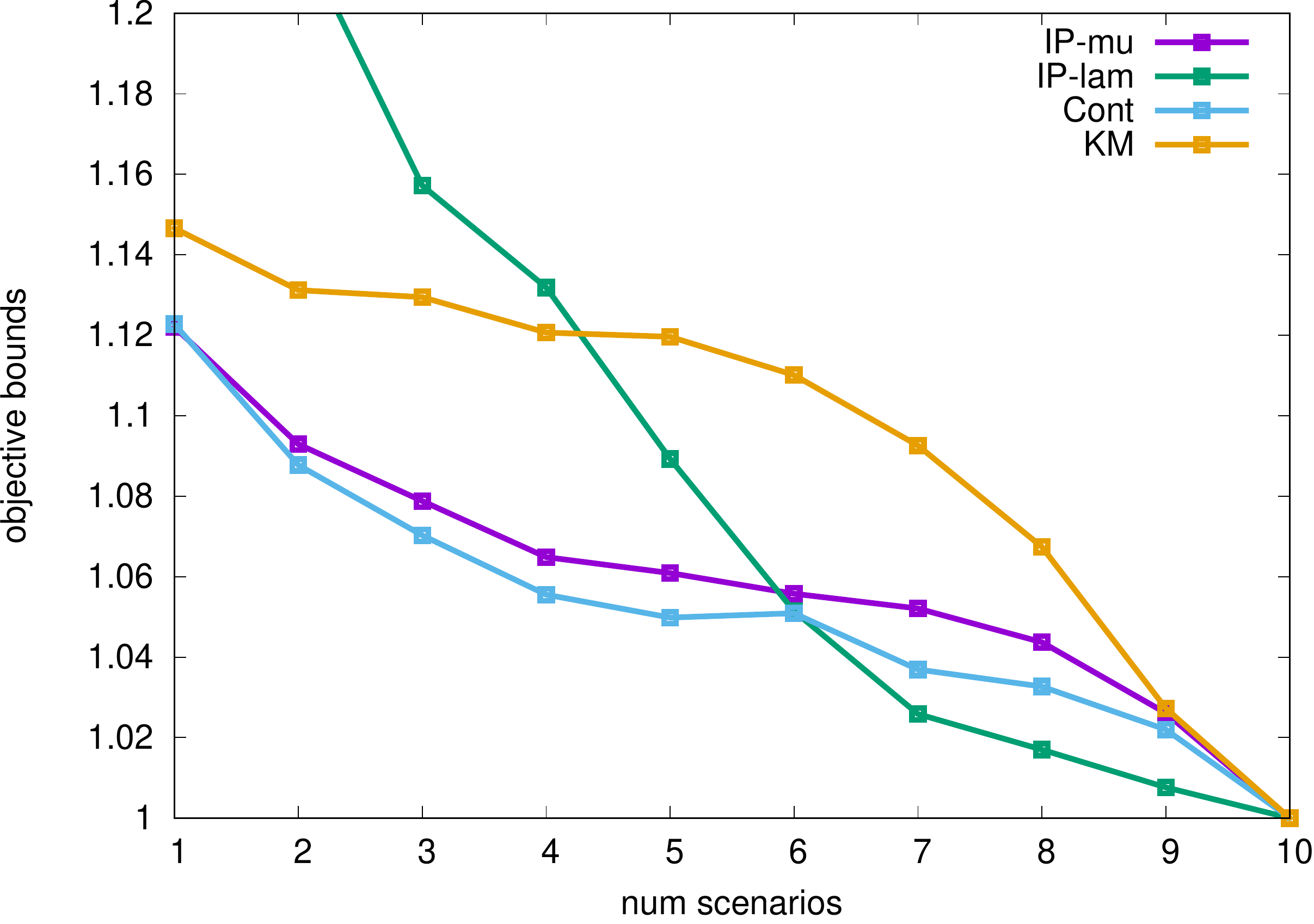}}%
\subfigure[$n=150$, $N=10$]{\includegraphics[width=0.3\textwidth]{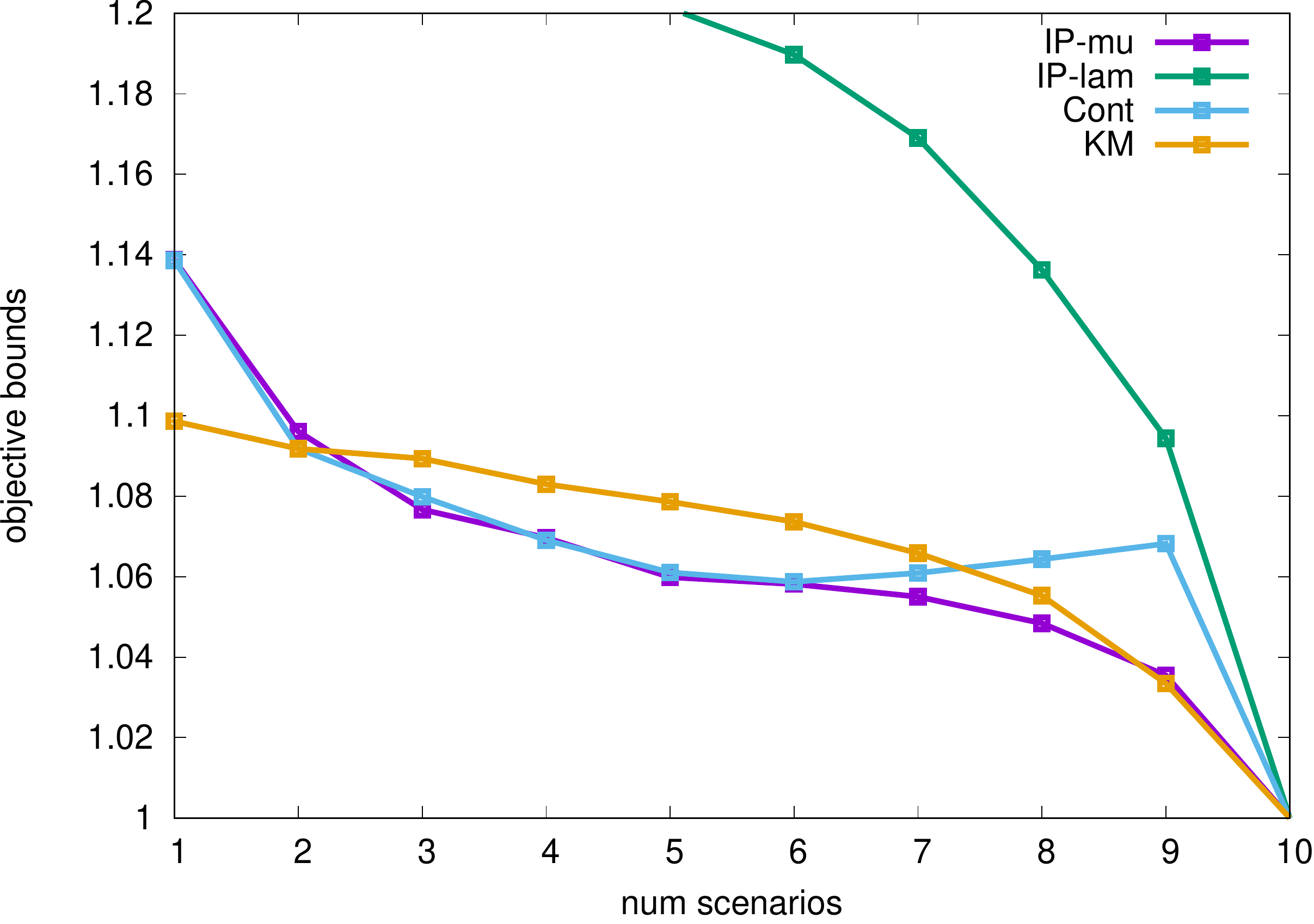}}
\subfigure[$n=20$, $N=50$]{\includegraphics[width=0.3\textwidth]{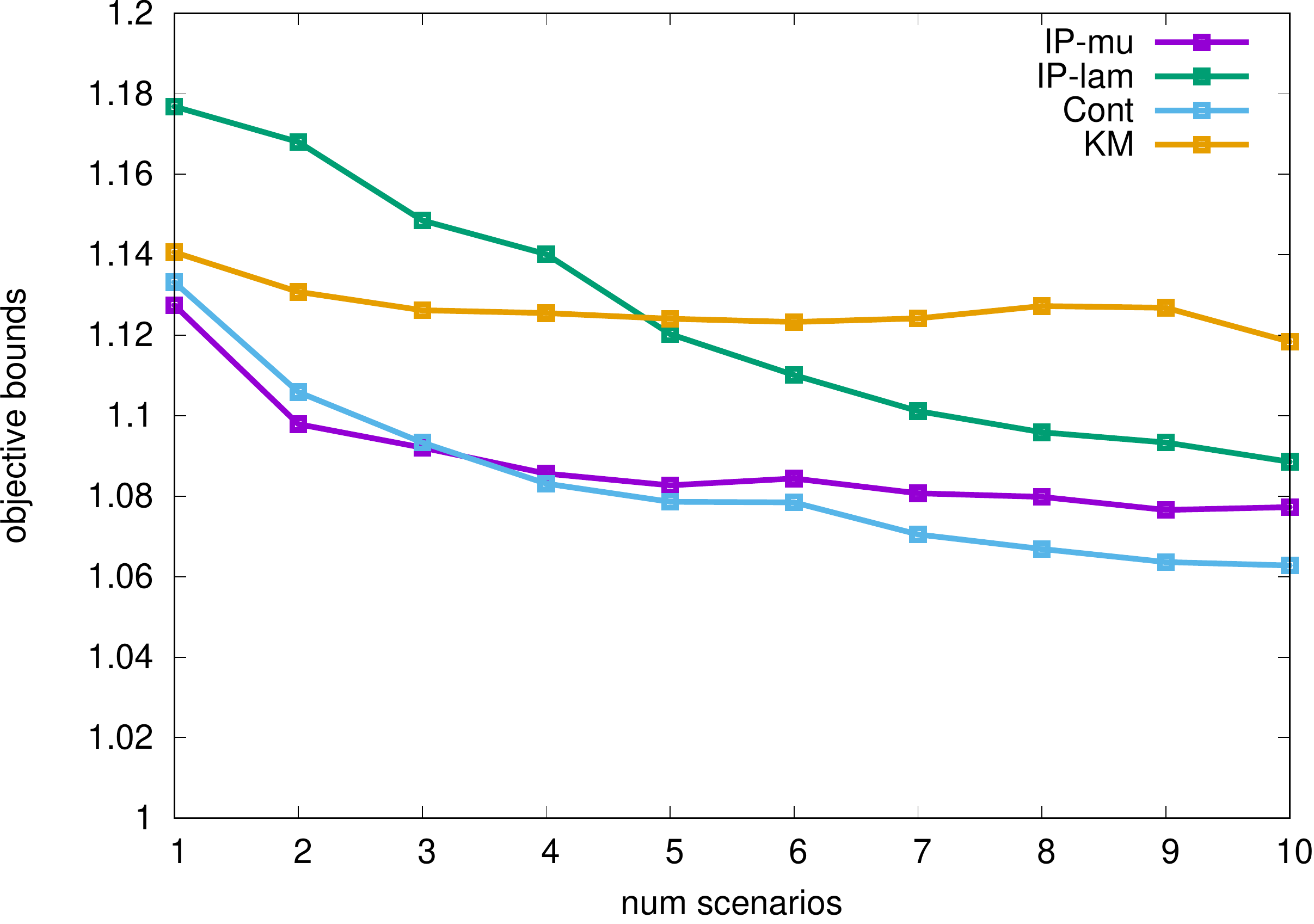}}%
\subfigure[$n=150$, $N=50$]{\includegraphics[width=0.3\textwidth]{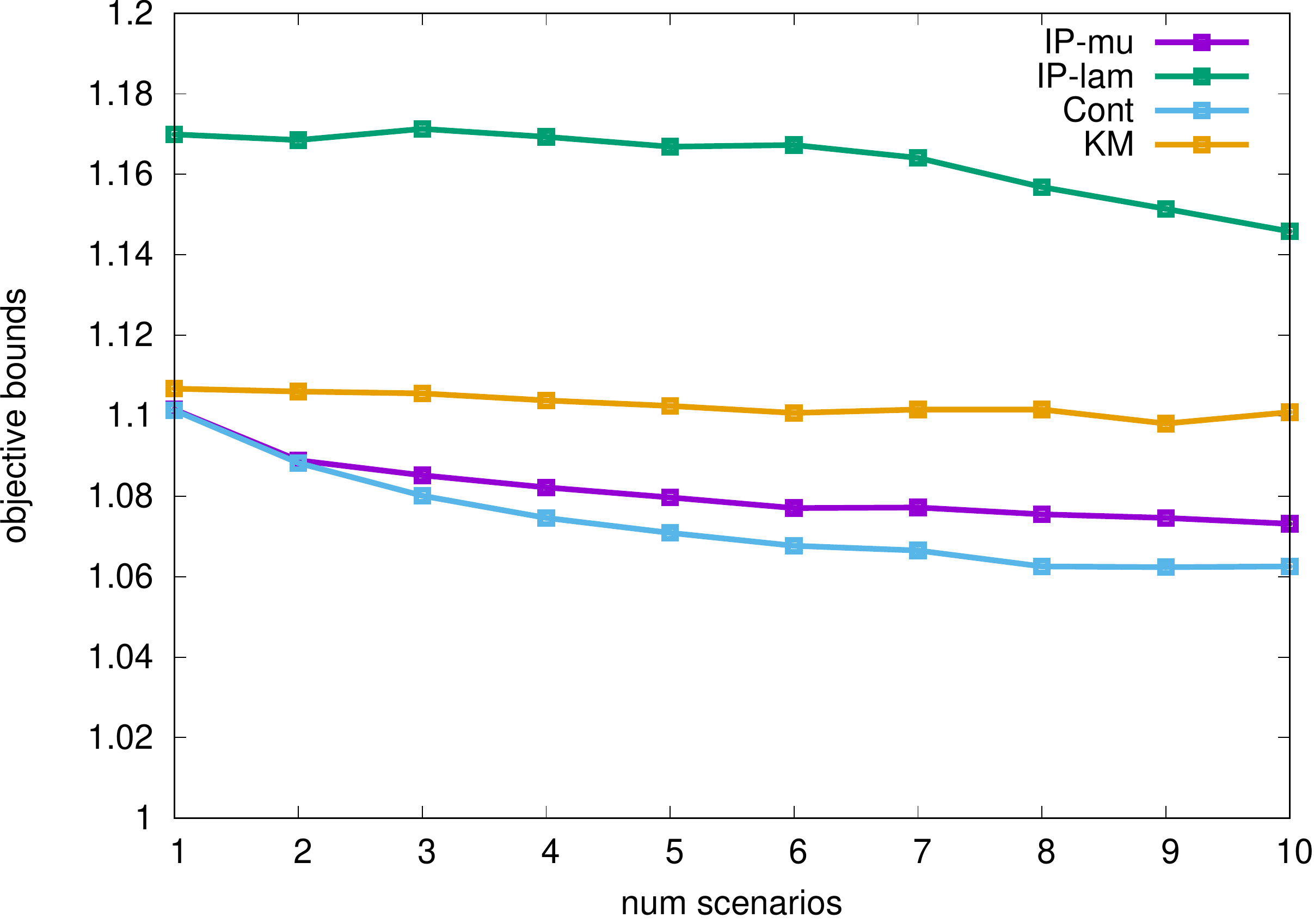}}
\end{center}
\caption{One-stage selection, average objective ratios.}\label{fig:1st-obj-sel}
\end{figure}

In Figure~\ref{fig:1st-obj-vc}, we compare the performance of robust solutions in the case of vertex cover problems. Qualitatively, the performance is similar to the case of selection problems.

\begin{figure}[htbp]
\begin{center}
\subfigure[$n=20$, $N=10$]{\includegraphics[width=0.3\textwidth]{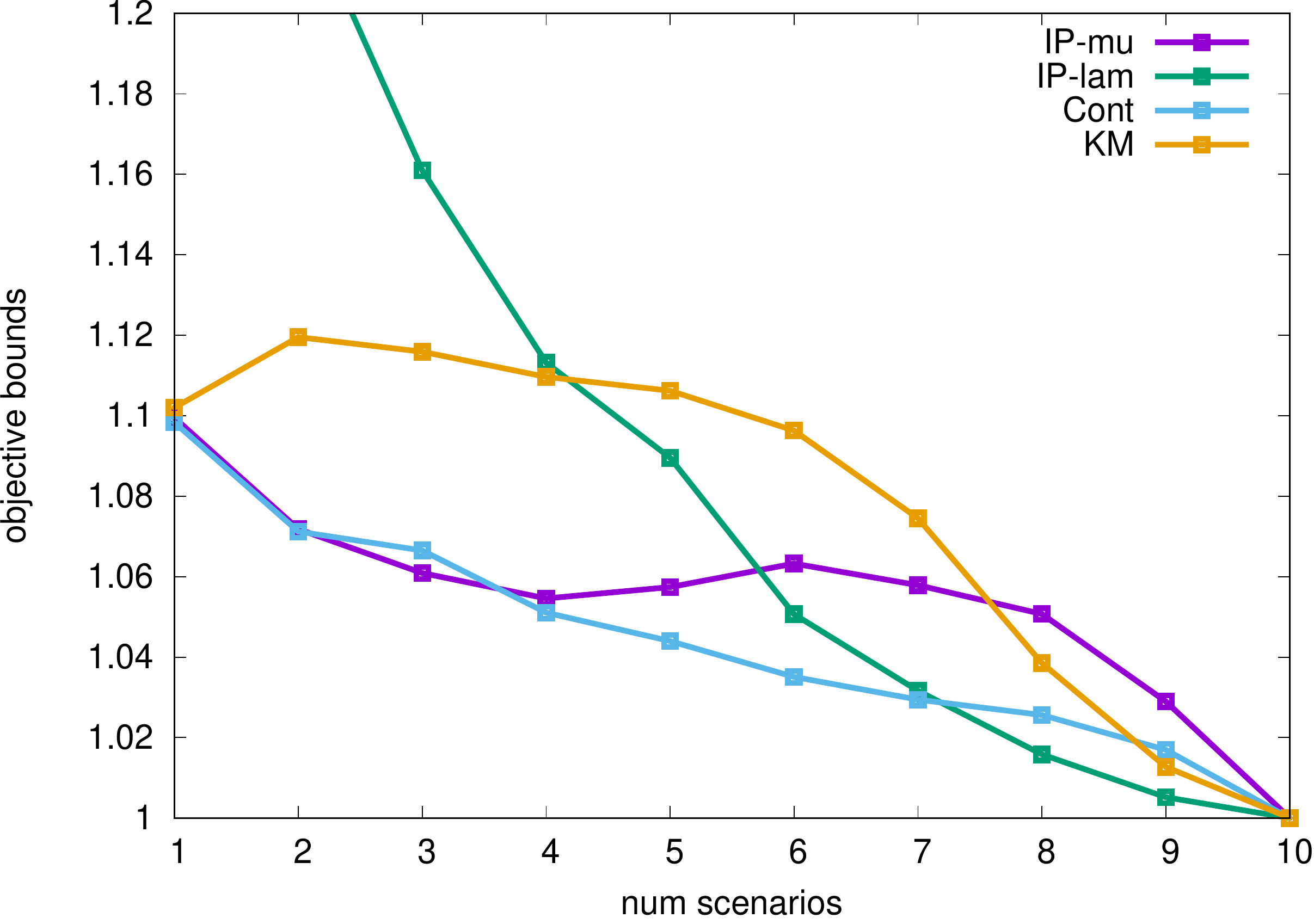}}%
\subfigure[$n=150$, $N=10$]{\includegraphics[width=0.3\textwidth]{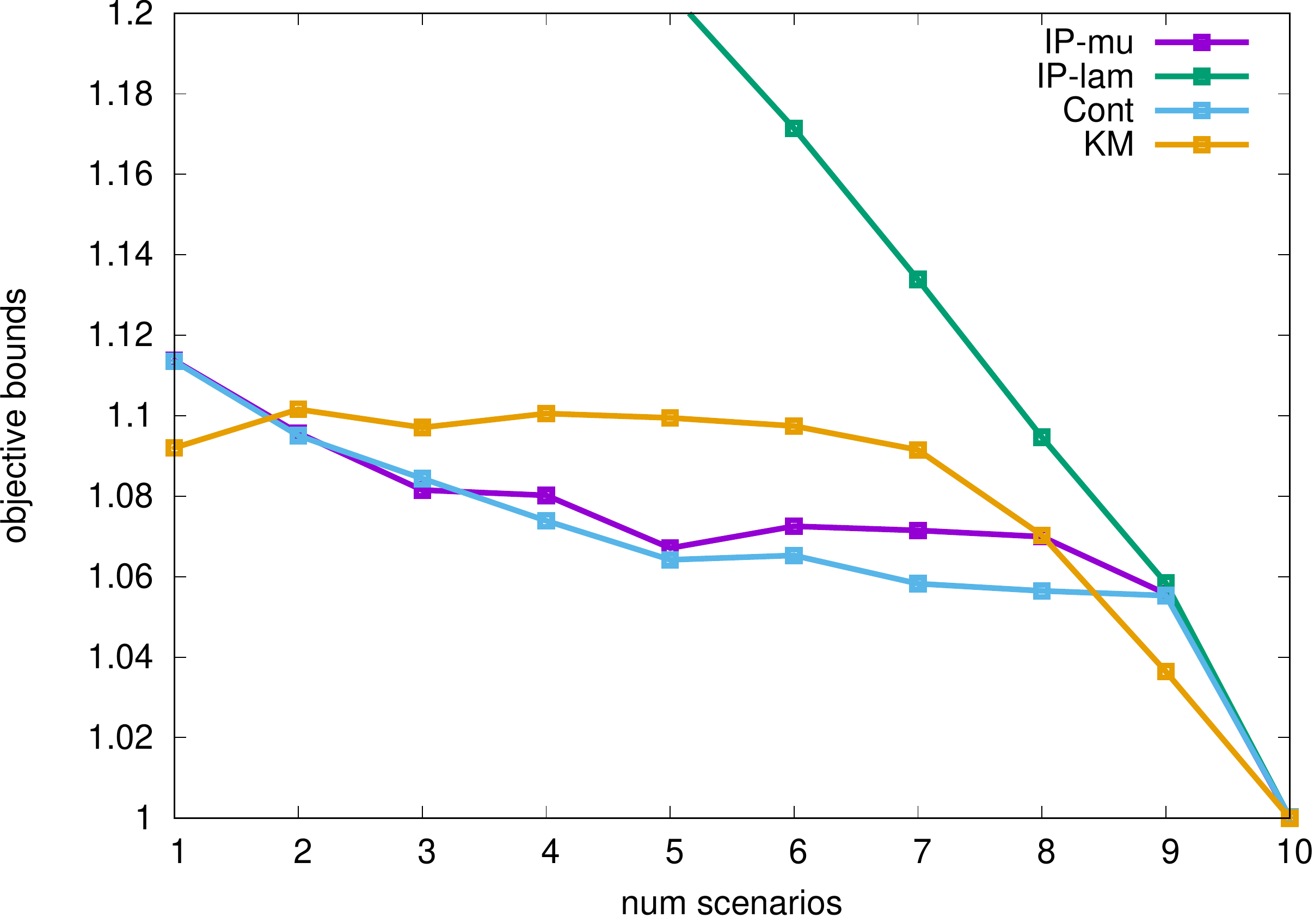}}
\subfigure[$n=20$, $N=50$]{\includegraphics[width=0.3\textwidth]{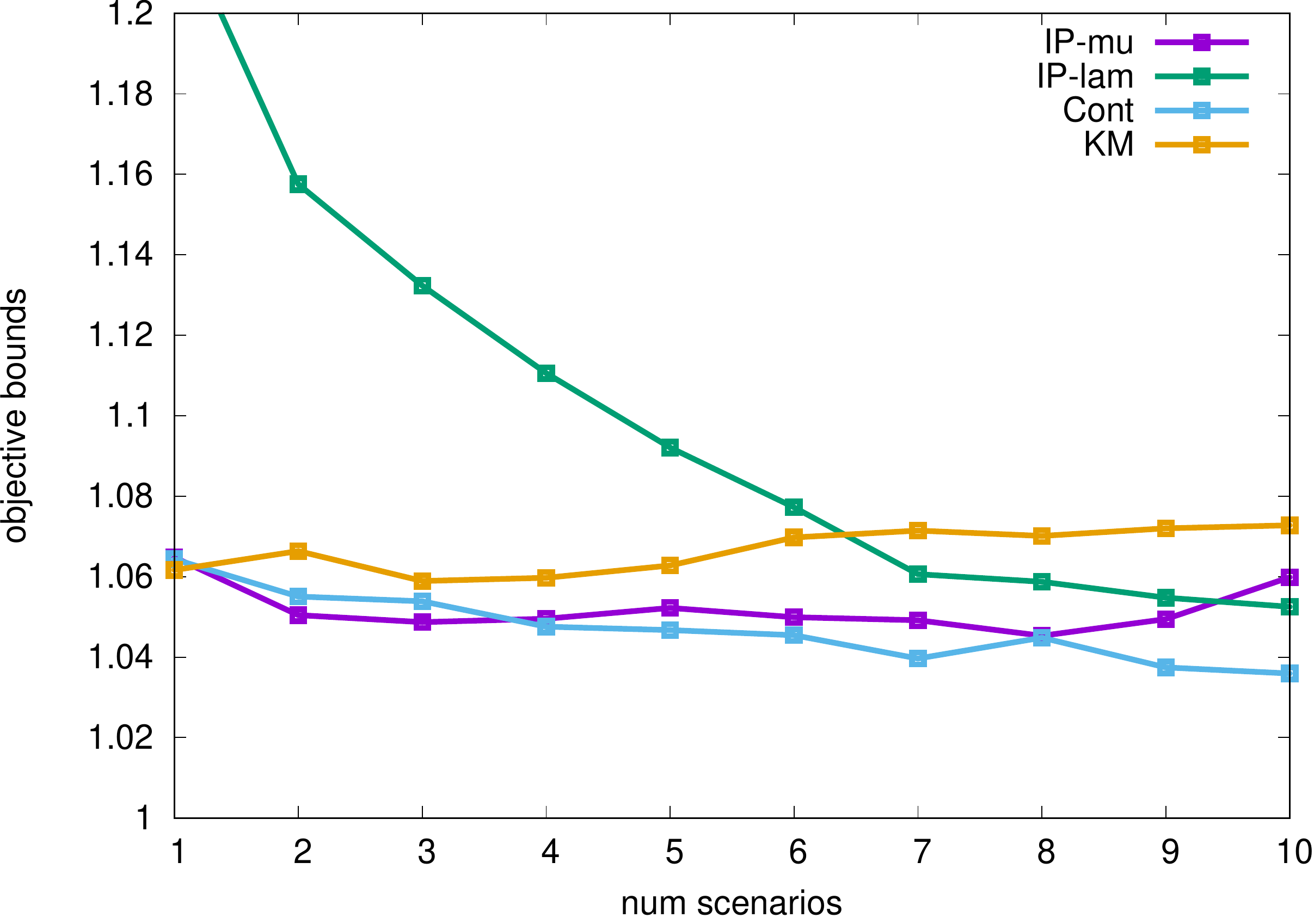}}%
\subfigure[$n=150$, $N=50$]{\includegraphics[width=0.3\textwidth]{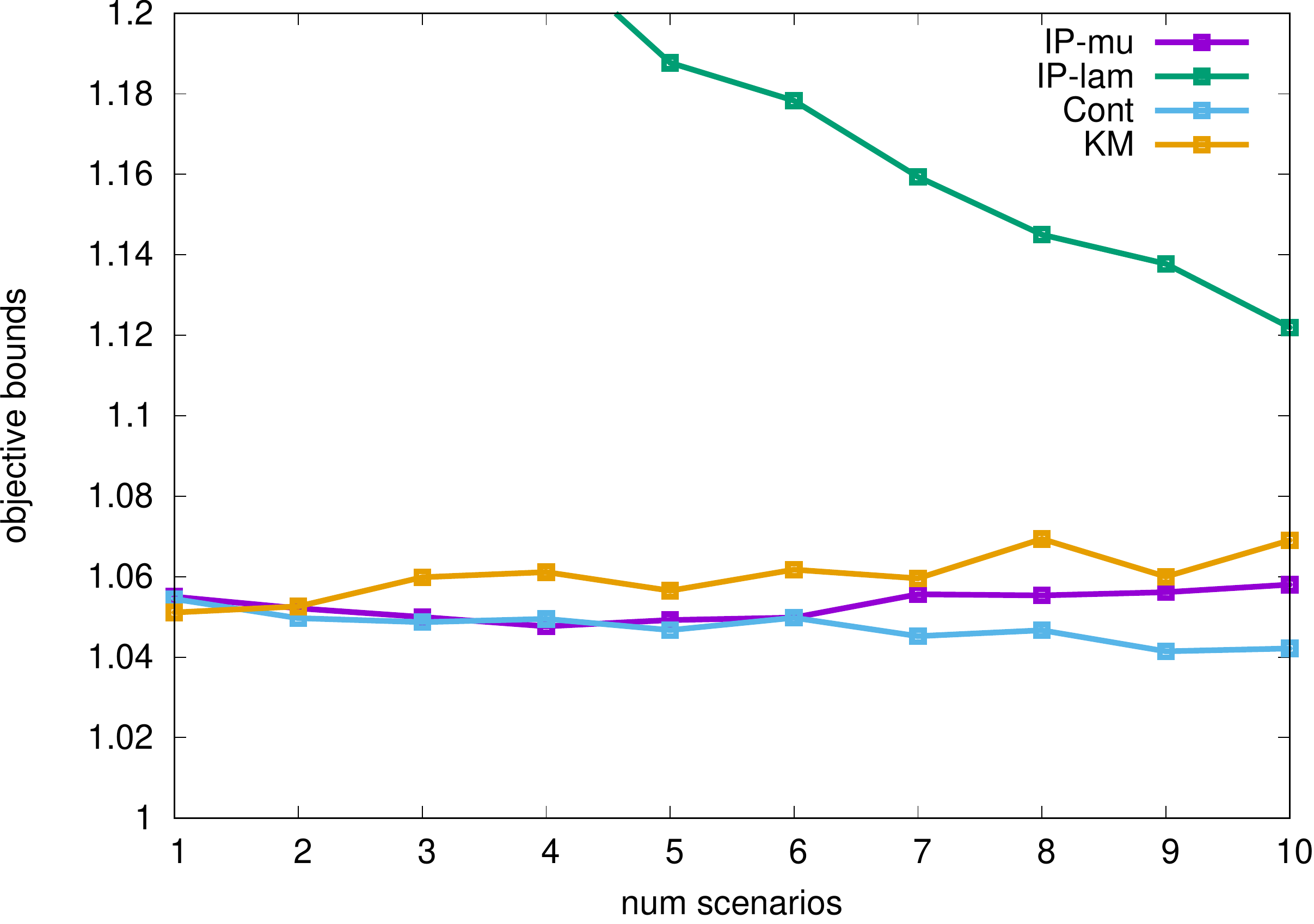}}
\end{center}
\caption{One-stage vertex cover, average objective ratios.}\label{fig:1st-obj-vc}
\end{figure}

\subsubsection{Results on Two-Stage Robust Optimization}
\label{subsec:exptwo}

We now consider two-stage robust optimization problems, where we compare the performance of the scenario reduction method IP with that of K-means. The setup follows the same lines as for our experiments with one-stage problems. In Figure~\ref{fig:2st-agg}, we show average times to construct the reduced uncertainty sets. Recall that problem IP allows for a smaller integer programming formulation than IP-$\lambda$ with only $N$ binary variables. Indeed we find that solution times are now competitive to those of K-means (with the caveat that here the computation time is the sum of 1000 runs). 

\begin{figure}[htbp]
\begin{center}
\subfigure[$n=20$, $N=10$]{\includegraphics[width=0.3\textwidth]{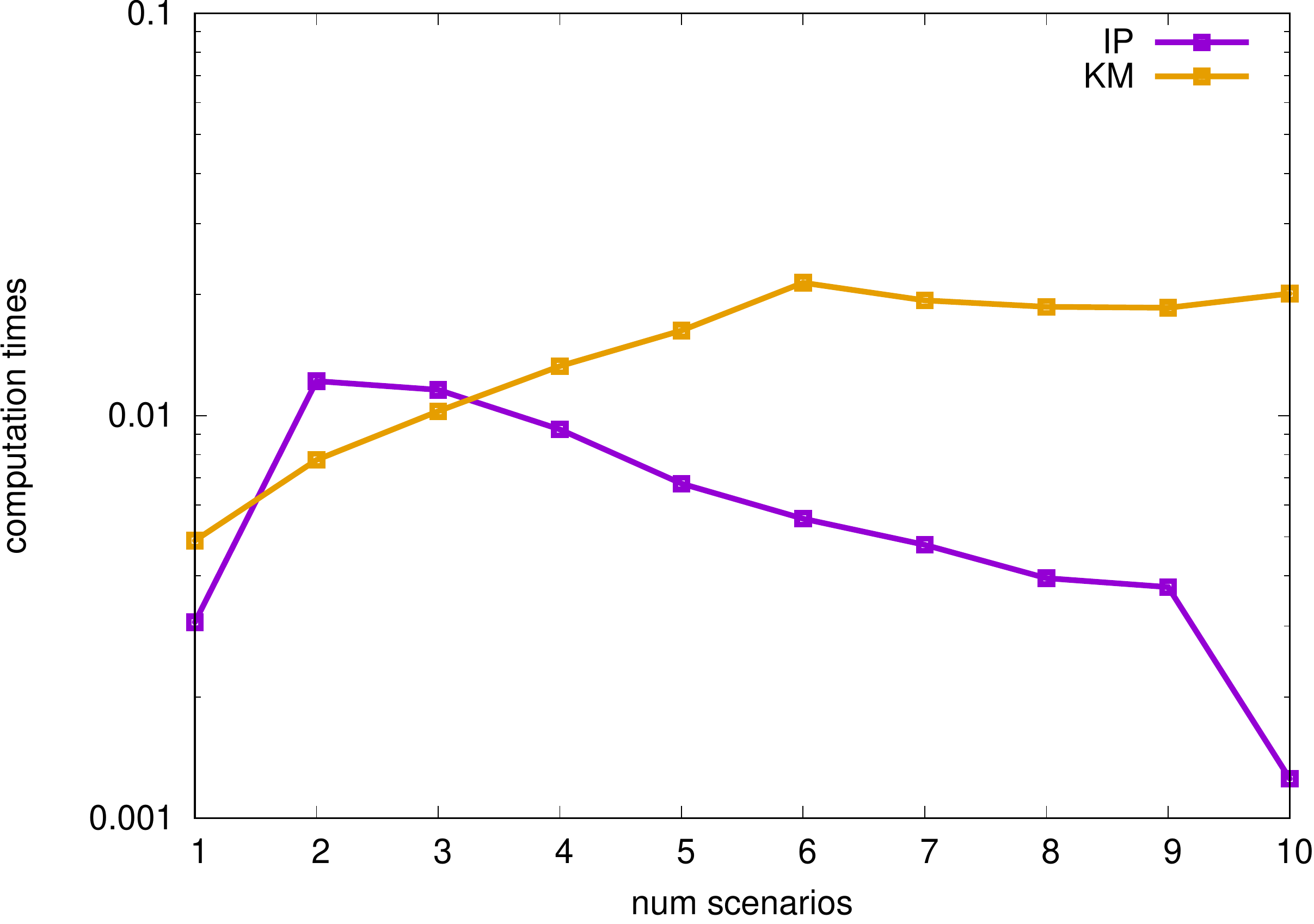}}%
\subfigure[$n=150$, $N=10$]{\includegraphics[width=0.3\textwidth]{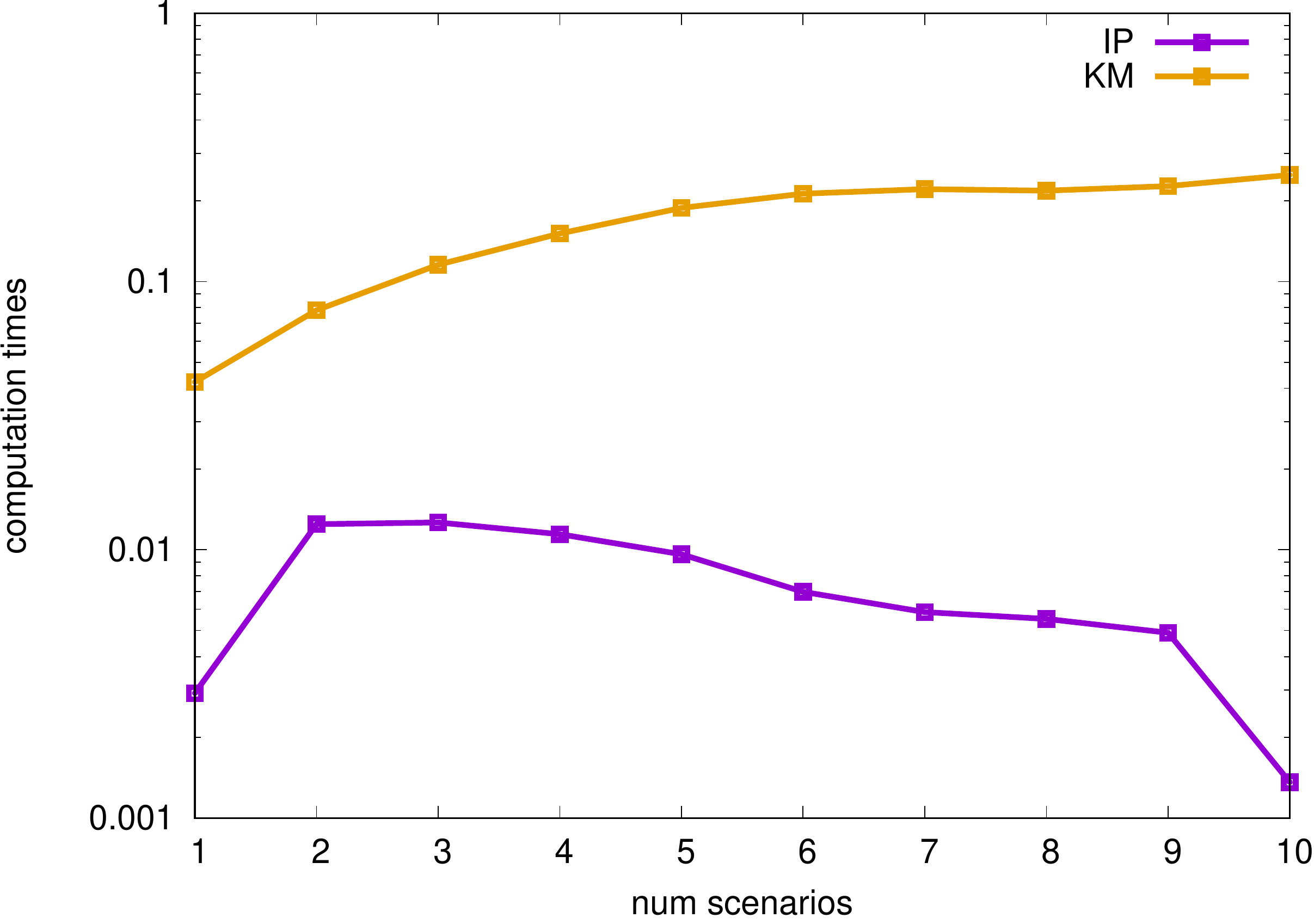}}
\subfigure[$n=20$, $N=50$]{\includegraphics[width=0.3\textwidth]{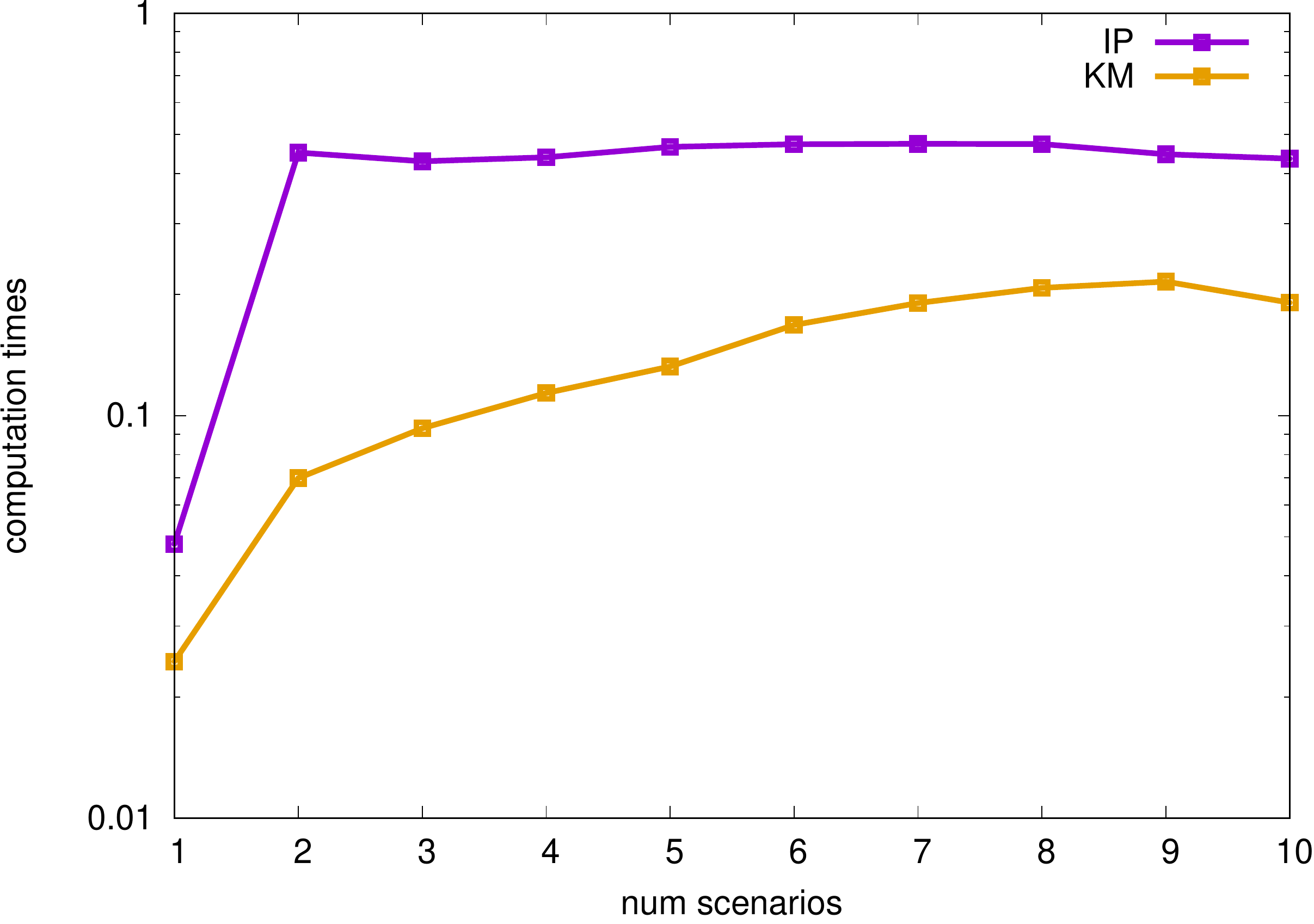}}%
\subfigure[$n=150$, $N=50$]{\includegraphics[width=0.3\textwidth]{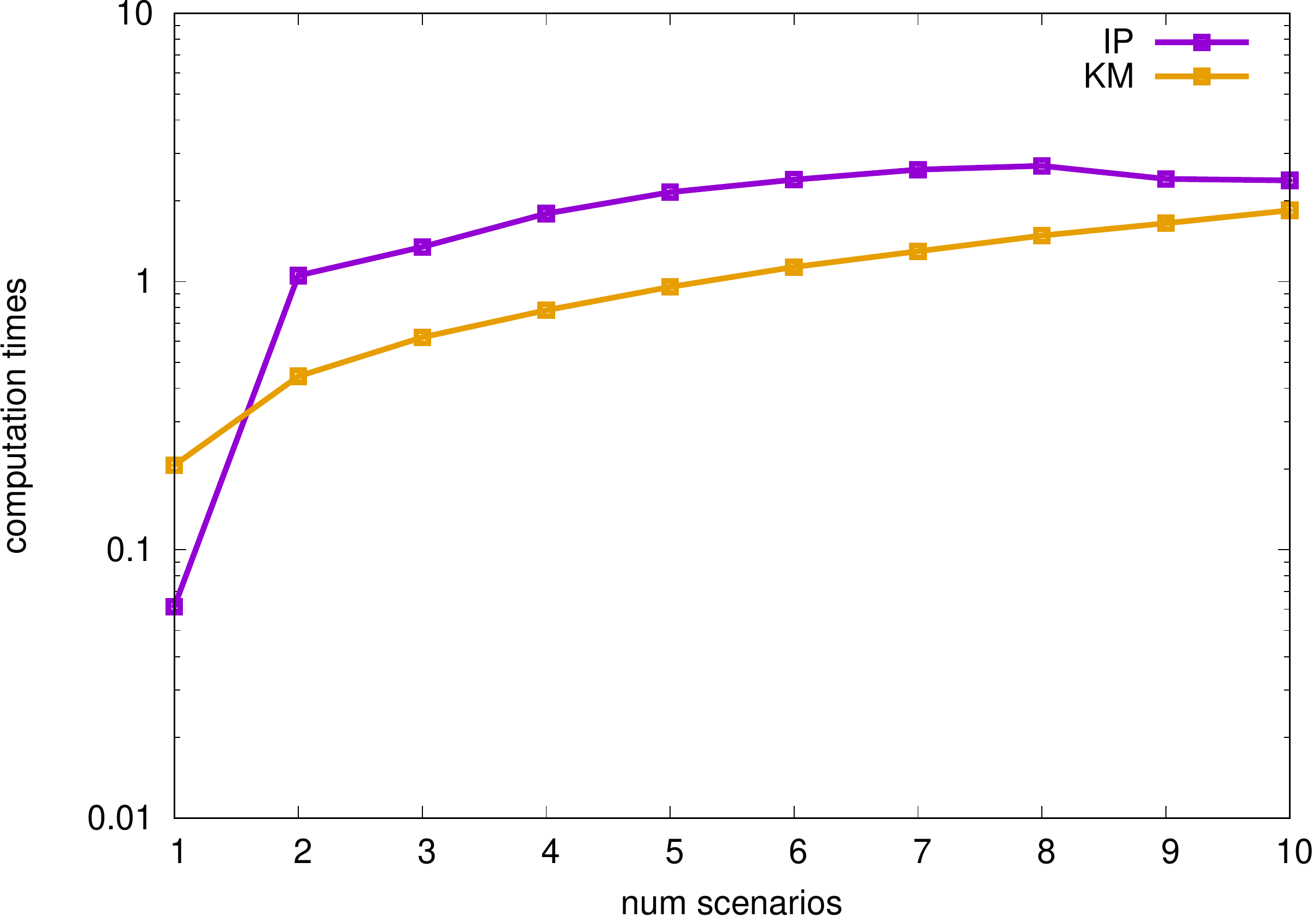}}
\end{center}
\caption{Two-stage aggregation times.}\label{fig:2st-agg}
\end{figure}

In Figure~\ref{fig:2st-obj-sel} we compare the performance of the resulting robust solutions for two-stage selection problems. We can observe that IP gives better solutions overall, where differences are less relevant for low-dimensional problems with $n=20$, but become more pronounced for high-dimensional problems with $n=150$. 

\begin{figure}[htbp]
\begin{center}
\subfigure[$n=20$, $N=10$]{\includegraphics[width=0.3\textwidth]{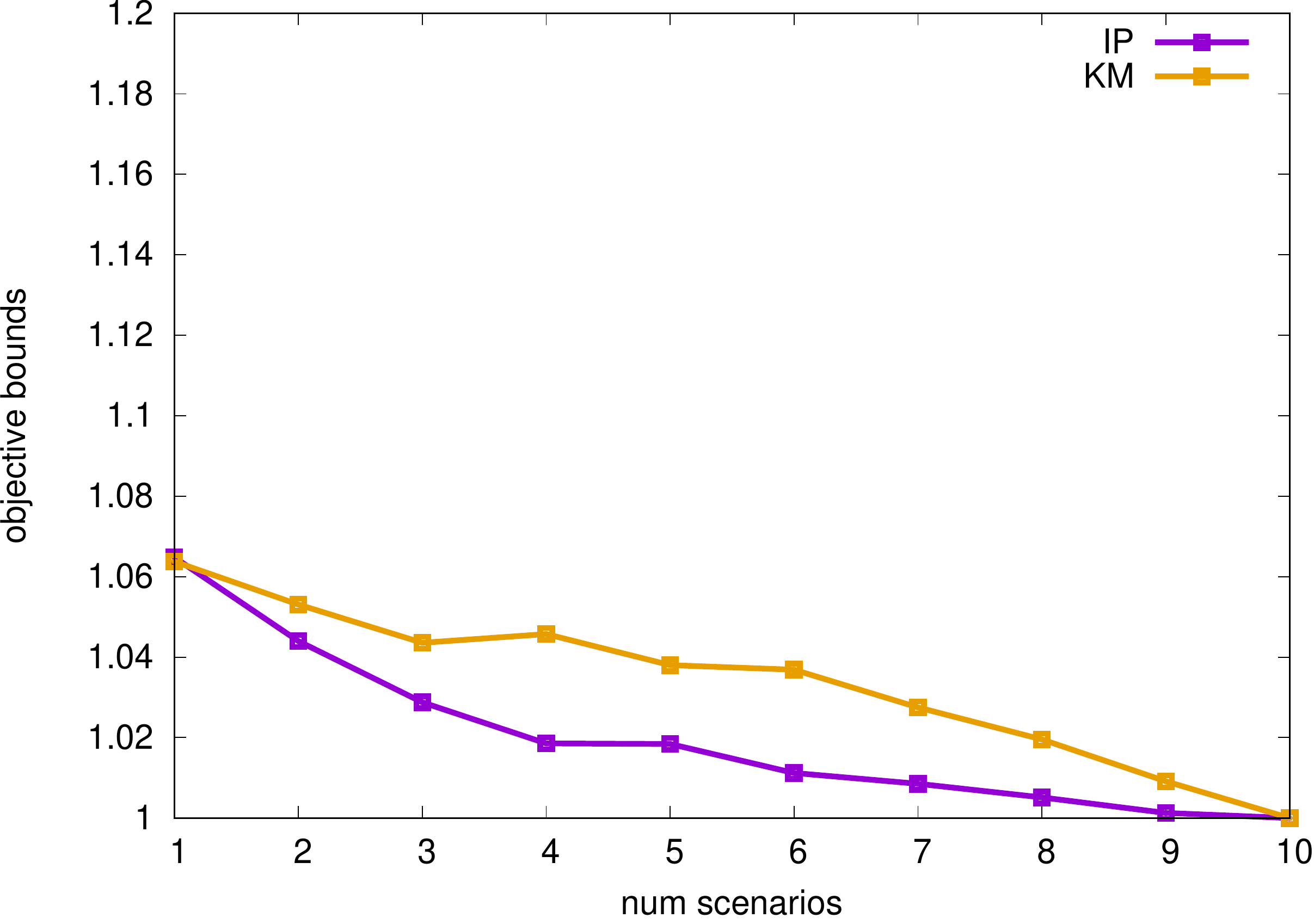}}%
\subfigure[$n=150$, $N=10$]{\includegraphics[width=0.3\textwidth]{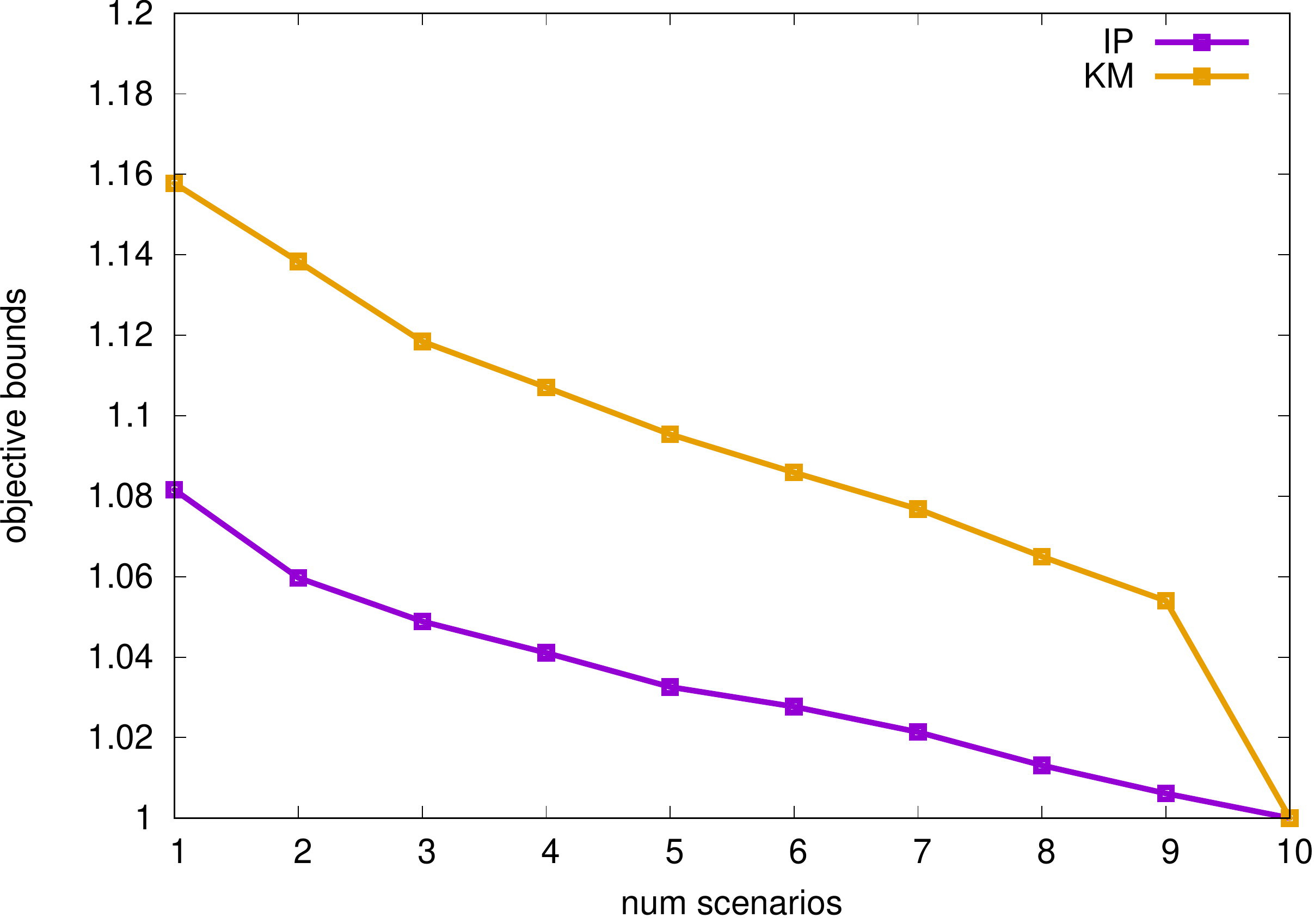}}
\subfigure[$n=20$, $N=50$]{\includegraphics[width=0.3\textwidth]{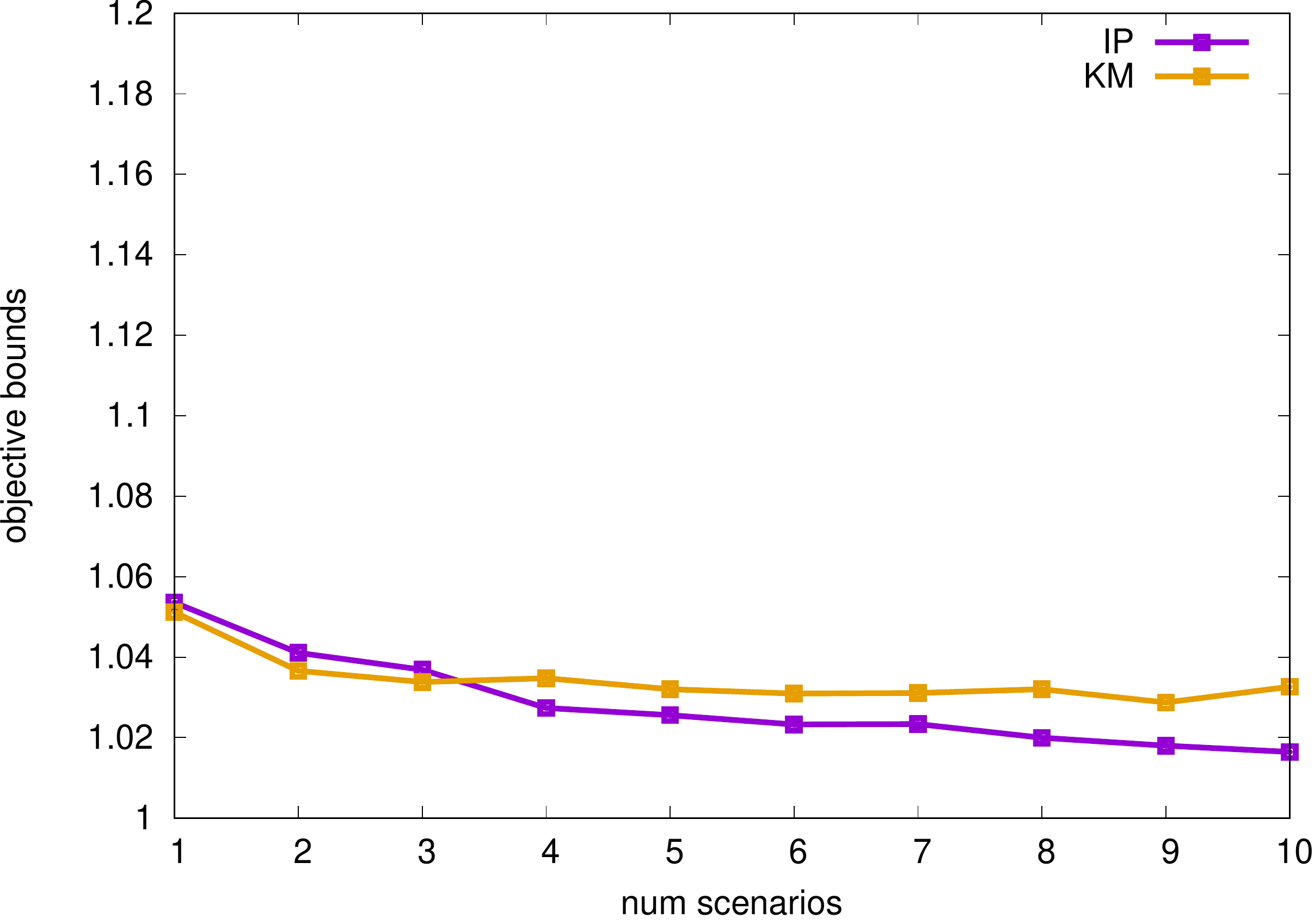}}%
\subfigure[$n=150$, $N=50$]{\includegraphics[width=0.3\textwidth]{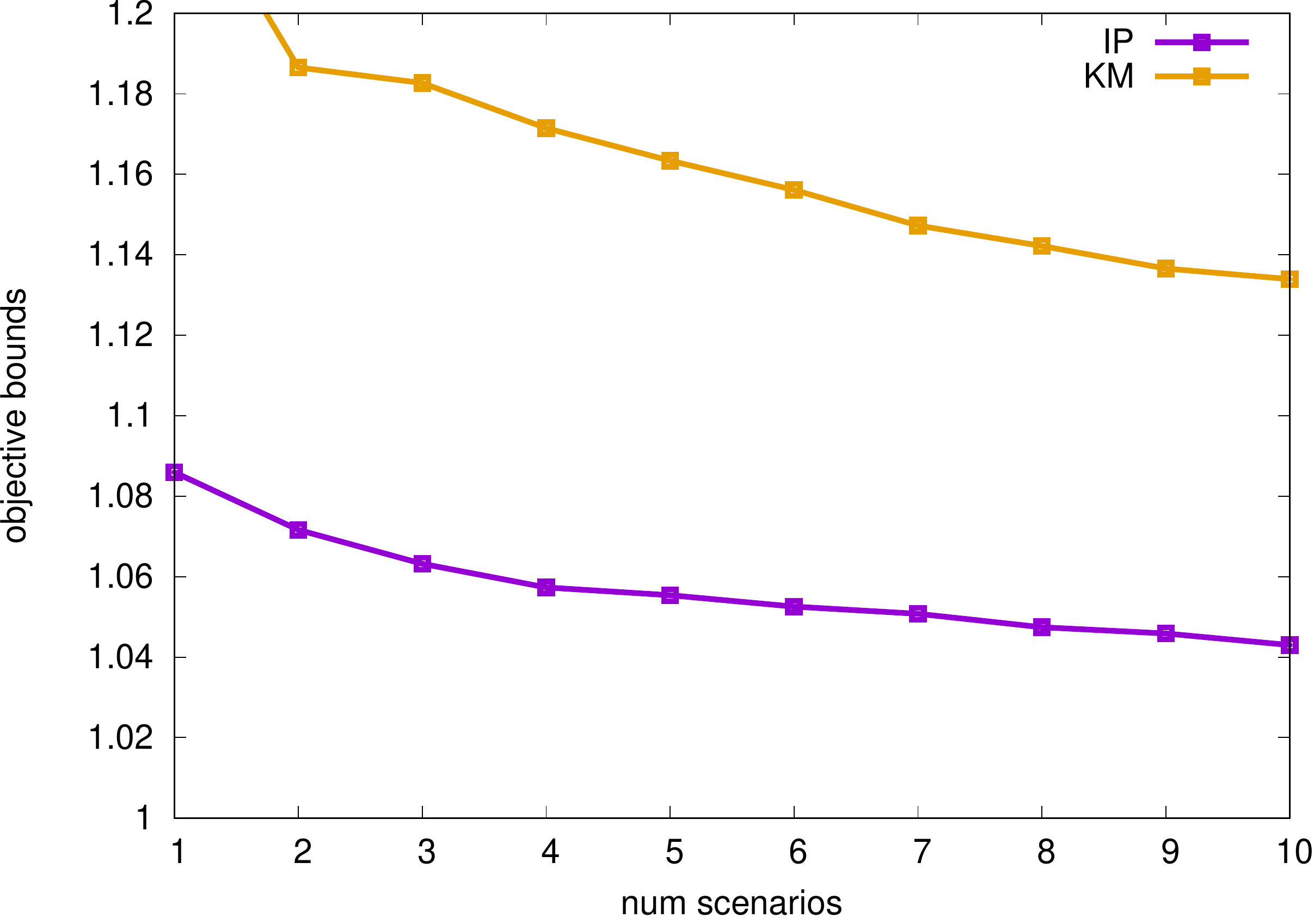}}
\end{center}
\caption{Two-stage selection, average objective ratios.}\label{fig:2st-obj-sel}
\end{figure}

While the same uncertainty sets are used for two-stage vertex cover problems, the benefits of using IP are not as clear-cut in this case, see Figure~\ref{fig:2st-obj-vc}. For the case of high problem dimension and small uncertainty set ($n=150$, $N=10$), IP outperforms K-means for all values of $K$ in the plot. For the other cases, solutions based on K-means clustering tend to be better for small values of $K$, but IP tends to improve more as $K$ increases, while the curve for K-means remains flat, especially for $N=50$. This is not surprising, given that as with method IP-$\lambda$, method IP is restricted to choose a subset of existing scenarios. If few scenarios can be chosen (i.e., if $K$ is small), then there may be advantage is choosing scenarios from the convex hull to better represent multiple scenarios simultaneously -- even if this is not reflected in the approximation guarantee. However, this only seems relevant for the smallest values of $K$.

\begin{figure}[htbp]
\begin{center}
\subfigure[$n=20$, $N=10$]{\includegraphics[width=0.3\textwidth]{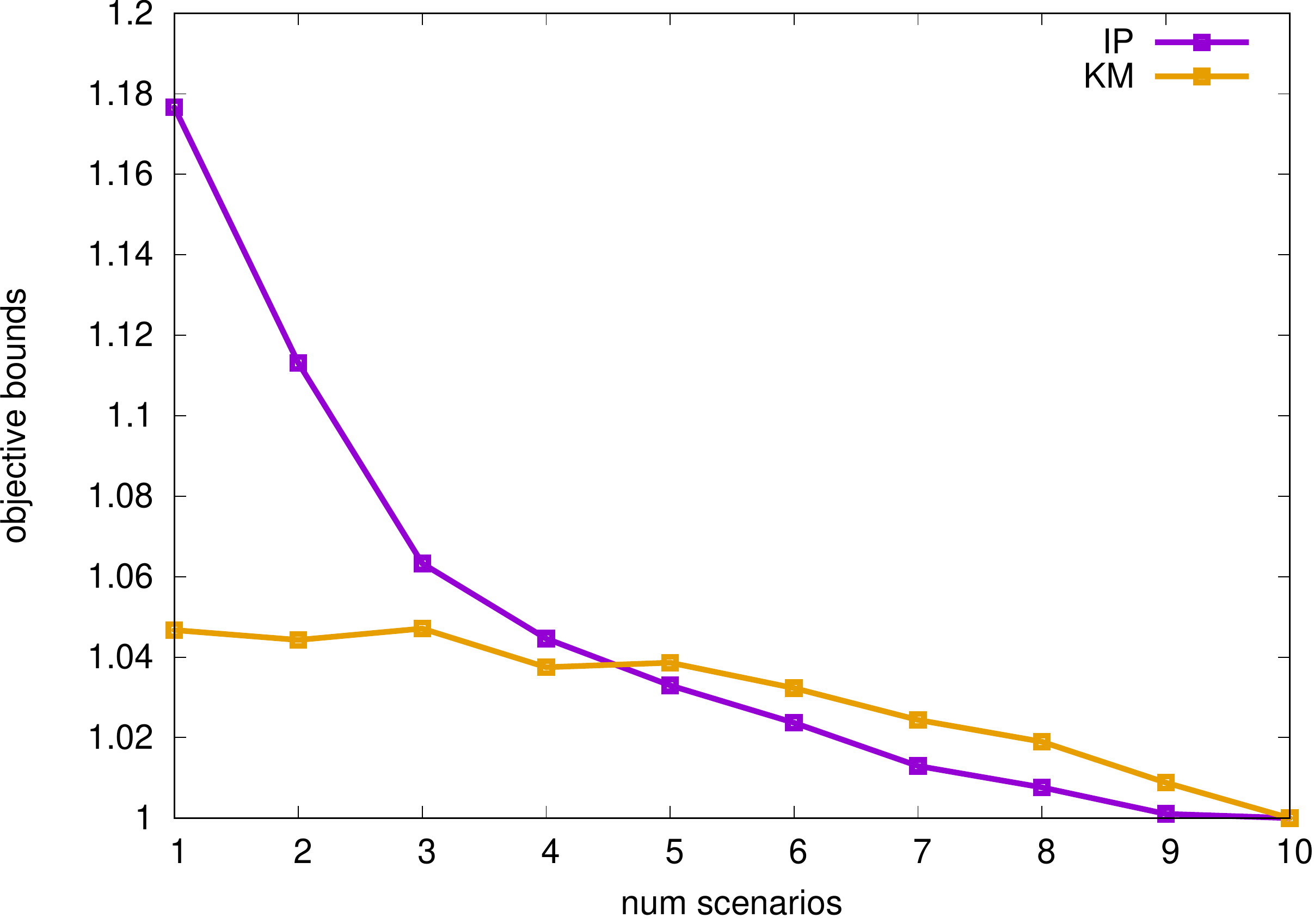}}%
\subfigure[$n=150$, $N=10$]{\includegraphics[width=0.3\textwidth]{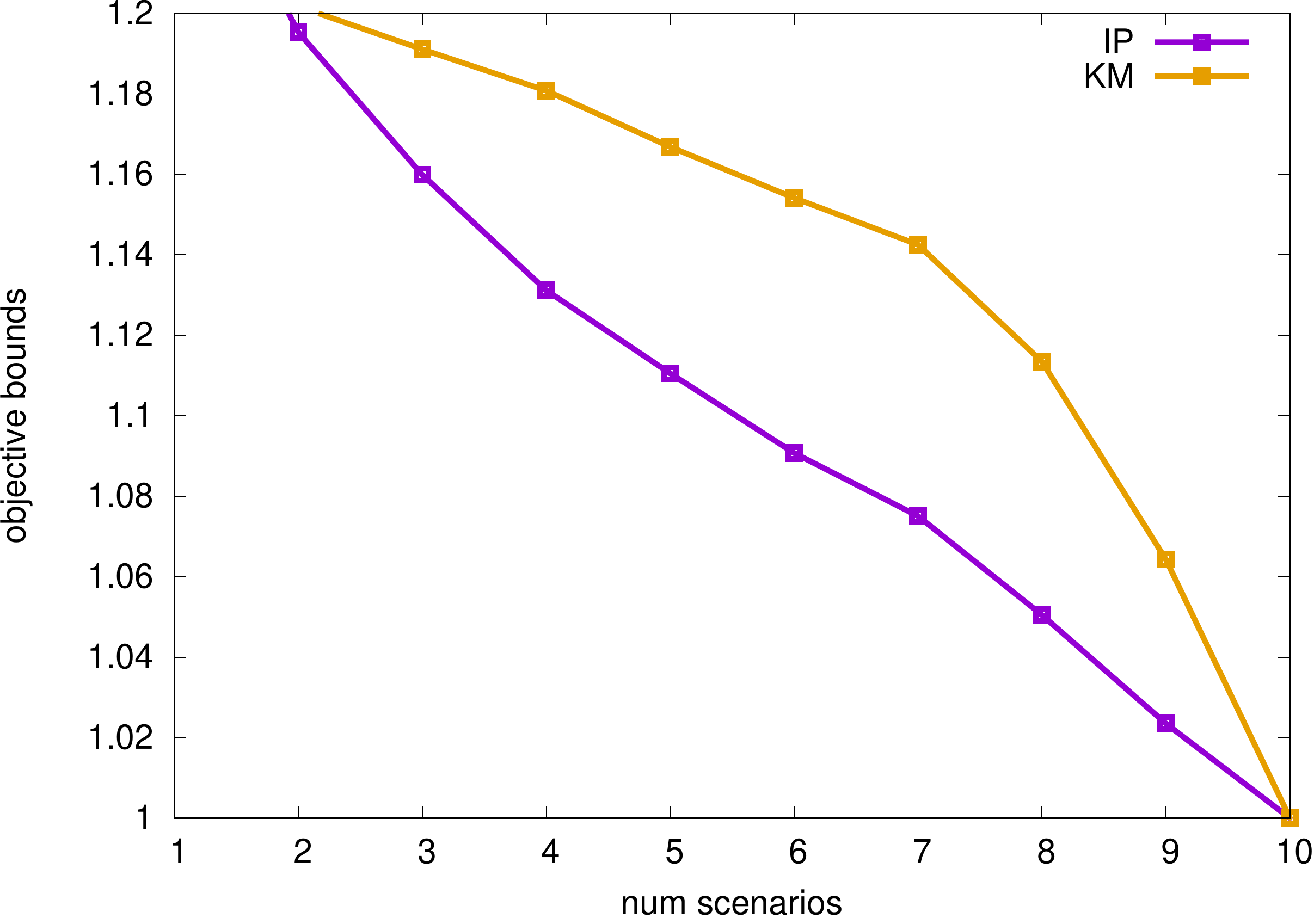}}
\subfigure[$n=20$, $N=50$]{\includegraphics[width=0.3\textwidth]{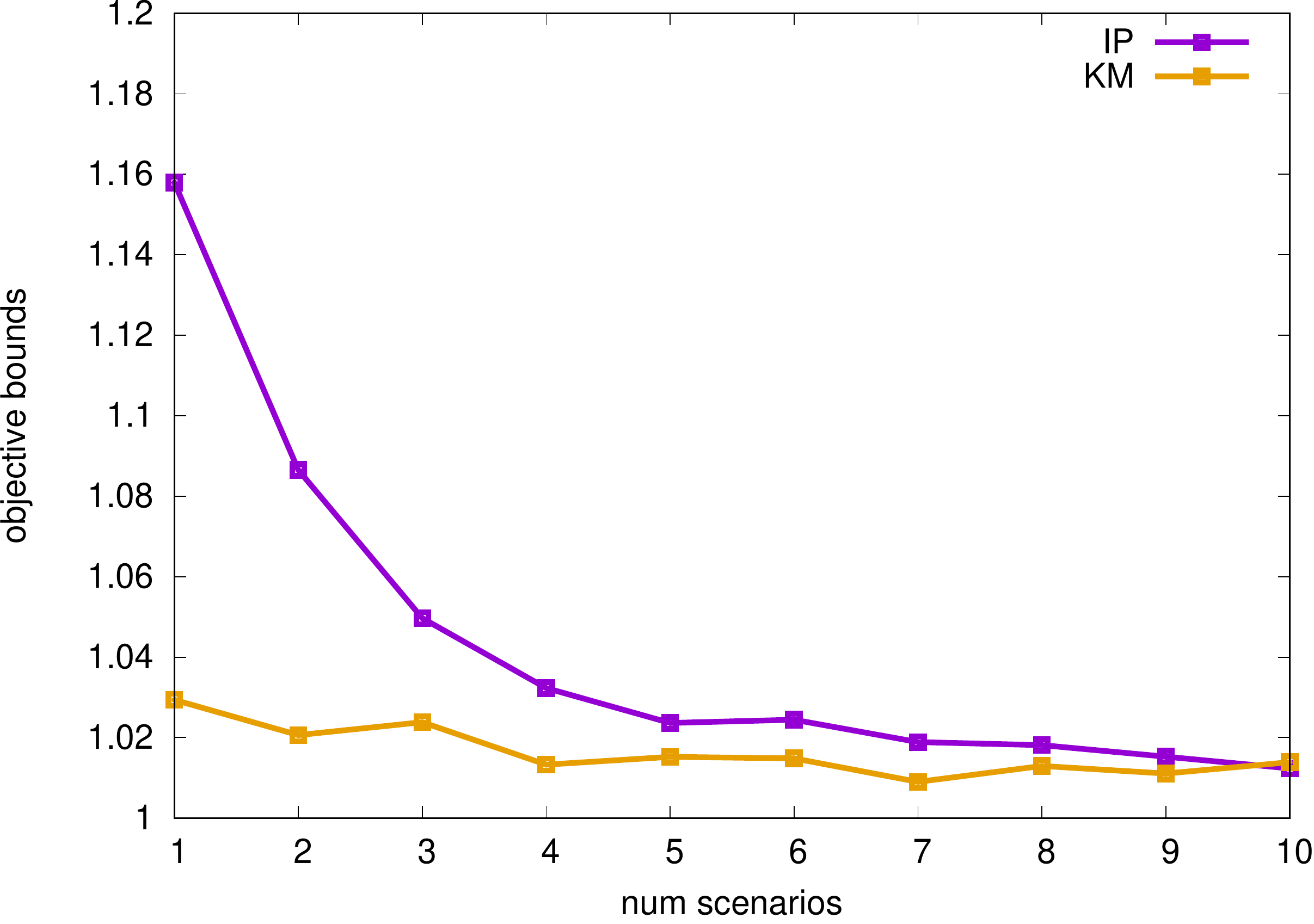}}%
\subfigure[$n=150$, $N=50$]{\includegraphics[width=0.3\textwidth]{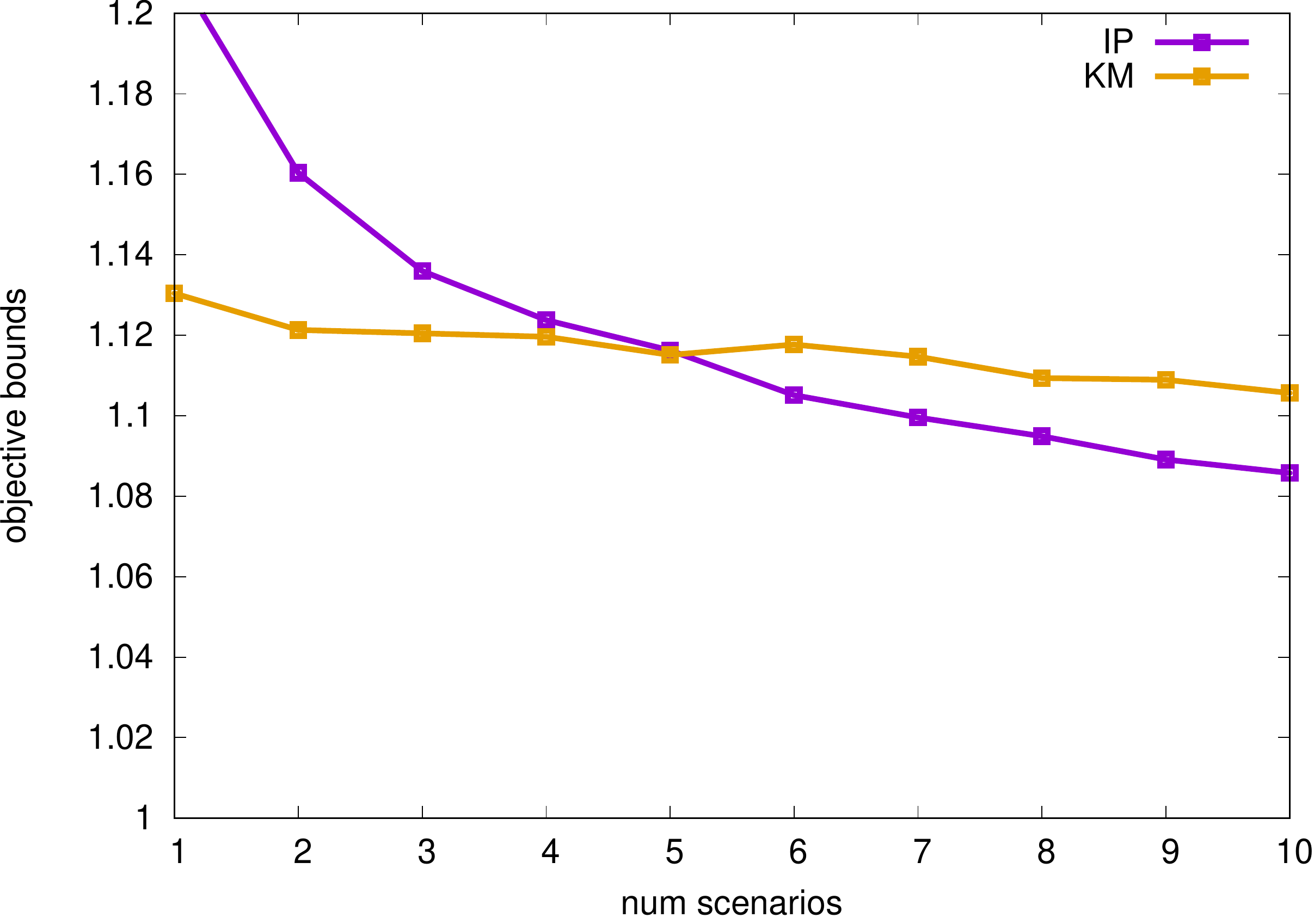}}
\end{center}
\caption{Two-stage vertex cover, average objective ratios.}\label{fig:2st-obj-vc}
\end{figure}

Overall, our scenario reduction methods give a valuable tool to find improved solutions even if data is generated in a way where both our methods and the K-means method find reduced uncertainty sets whose worst-case objective values have a high correlation to the original worst-case objective values. This advantage is likely to increase further on uncertainty sets that are less suitable to reduce using the K-means method.

\section{Conclusions}
\label{sec:conclusions}

Over the last years, the use of data for optimization problems has seen increasing attention in the research community. In the robust optimization setting, several data-driven approaches to modeling uncertainty sets have been developed. Usually, data that is observed in practice comes as discrete data points, e.g., one scenario may give measurements of the parameters of the optimization problem at one specific point in time. On the one hand, using more data in the optimization process should lead to more finely calibrated models and thus improved performance of robust solutions; on the other hand, robust optimization problems, in particular two-stage problems, tend to get harder to solve, the more scenarios are involved.

It is therefore a natural question to consider if it possible to reduce the number of scenarios of an uncertainty set while remaining as true as possible to the original problem. In the area of stochastic optimization, where the use of probability distributions enables us to better estimate the effect of removing scenarios from consideration, such approaches are well-studied. To the best of our knowledge, no general principled methodology has been studied for robust optimization, where the removal of a single scenario may have dramatic effects on the worst-case performance of a solution.

In this paper we propose to reduce the scenario set of a robust optimization problem with the aim of bounding the approximation ratio of the resulting solution as tightly as possible. This means that we can calculate scenario sets of any desired size that give a guarantee on the relative increase in objective value for the resulting robust solution in comparison to the robust solution of the original problem. Our approach only considers the scenario data and can be applied to any set of feasible solutions, provided that all variables are non-negative. Following this idea, we derived several models that can be applied to one-stage optimization problems, and a model that can be used in the case of two-stage robust optimization. While the latter problem is actually inapproximable, we can avoid this issue by providing instance-dependant rather than a-priori approximation guarantees.

In computational experiments we show that the reduced uncertainty sets we construct yield a higher corrleation on robust objective values than can be achieved using K-means, in particular so if the data contains outliers. Using selection and vertex cover problems as testbeds, we observed that our methods can result in robust solutions that provide a considerable improvement in performance when compared to using K-means, at the cost of increased effort when calculating reduced scenario sets.

Several further directions of research should be considered. On the one hand, is it possible to further improve the approximation guarantees that clustering approaches provide? While our method has the advantage that it can be applied to any robust problem, further specialization may bring stronger guarantees, i.e., incorporating the structure of $\X$ may lead to stronger results. On the other hand, is it possible to find clusterings of similar quality using less computation time? All of our approaches are based on the solution of integer linear programs or sequences of linear programs. It may be possible to avoid the solution of such problems altogether to reach methods that are similarly scalable as the K-means algorithm.

\FloatBarrier
\appendix

\section{Complexity Proofs}
\label{sec:proofs}

\begin{theorem}\label{th:hardness}
The decision version of Cont is NP-complete.
\end{theorem}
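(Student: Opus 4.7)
The plan is to prove the theorem in two steps: membership in NP, and a hardness reduction from a known NP-hard problem.

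For NP-membership, even though $\mu$ and $\lambda$ are continuous, the feasible region of Cont is a compact polytope once we observe that every variable lies in $[0,1]$ and that $t$ is bounded by the ratio of the largest to smallest positive scenario coordinate. The constraints are bilinear, so I would invoke the standard vertex argument for bilinear programming: an optimal solution can be chosen at a vertex of the product polytope in $(\mu, \lambda)$, and such a vertex has rational coordinates of bit-length polynomial in the input. Equivalently, alternating between fixing $\mu$ (yielding an LP in $(\lambda, t)$) and fixing $\lambda$ (yielding an LP in $(\mu, t)$) produces a rational optimum, which serves as a polynomial-size certificate for the guess $t \ge t^*$.

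For hardness I would reduce from \textsc{Dominating Set}: given a graph $G = (V, E)$ and integer $K$, decide whether there exist $K$ vertices whose closed neighborhoods cover $V$. For each vertex $v$, create a scenario $\pmb{c}^v \in \mathbb{R}^V_+$ with $c^v_u = M$ if $u \in N[v]$ and $c^v_u = \varepsilon$ otherwise, where $M$ and $\varepsilon^{-1}$ are polynomial in $|V|$, and set the threshold $t^*$ just below $1$. A convex combination satisfies $\hat{c}^k_u = M\alpha + \varepsilon(1-\alpha)$ with $\alpha = \sum_{v : u \in N[v]} \lambda_{kv}$, so forcing any convex combination of aggregates to dominate a scenario $\pmb{c}^{v^*}$ at threshold $t^*$ pushes essentially all the $\lambda$-weight onto scenarios $\pmb{c}^v$ with $u \in N[v]$. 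With appropriate scaling, this forces each effective aggregate to correspond to a single vertex and the $K$ aggregates to induce a dominating set of $G$; conversely, a dominating set of size $K$ directly supplies a feasible Cont solution at threshold $t^*$ via integer $\lambda$ and $\mu$.

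The principal obstacle is making this no-mixing argument quantitative and uniform: one needs to show that any convex combination spreading weight across scenarios whose closed neighborhoods do not collectively dominate coordinate $u$ falls short of $t^*$ for every scenario $\pmb{c}^{v^*}$ simultaneously. This requires calibrating $M$, $\varepsilon$, and $t^*$ as polynomial-size rationals so that the gap between the value attained by a genuine dominating-set solution and that attained by any fractional impostor is at least an inverse-polynomial quantity, and then verifying the equivalence between the two decision problems. Once this calibration is in place, the reduction runs in polynomial time and NP-hardness of the decision version of Cont follows.
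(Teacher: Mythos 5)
Your overall strategy---a reduction from \textsc{Dominating Set}---is the same as the paper's, but your gadget does not encode the right covering relation, and the claimed equivalence fails. With scenarios $\pmb{c}^v$ equal to $M$ on $N[v]$ and $\varepsilon$ elsewhere and a threshold $t^*$ close to $1$, a convex combination $\sum_k \mu_k \hat{\pmb{c}}^k$ dominates $t^*\pmb{c}^{v^*}$ only if, for \emph{every} coordinate $u\in N[v^*]$, weight roughly $t^*$ sits on vertices $v$ with $u\in N[v]$. Since the total weight is $1$ and each such coordinate separately demands weight $\approx t^*$, the weight must concentrate on vertices $v$ with $N[v]\supseteq N[v^*]$; this is the containment relation on closed neighborhoods, not the domination relation $v^*\in N[v]$. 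Concretely, on the path $v_1$--$v_2$--$v_3$--$v_4$--$v_5$ with dominating set $\{v_2,v_4\}$ and $K=2$, the scenario $\pmb{c}^{v_3}$ has $M$ at coordinates $2,3,4$: coordinate $2$ forces weight $\gtrsim t^*$ on $\pmb{c}^{v_2}$ while coordinate $4$ forces weight $\gtrsim t^*$ on $\pmb{c}^{v_4}$, which is impossible for $t^*>1/2$. So a yes-instance of \textsc{Dominating Set} need not map to a yes-instance of Cont; the forward direction already breaks, and no calibration of $M$, $\varepsilon$, $t^*$ repairs it.

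The paper's construction avoids this by using \emph{two} families of scenarios: for each vertex $v_i$ a spike scenario $\tilde{\pmb{c}}^i$ (value $16$ at coordinate $i$, zero elsewhere) and a neighborhood scenario $\pmb{c}^i$ (value $12$ on the coordinates covered by $v_i$, value $9$ elsewhere), with $t=4/3$. The neighborhood scenarios dominate one another within factor $4/3$ (since $12\le \tfrac{4}{3}\cdot 9$), so they impose no constraints and serve purely as candidate centers; the spikes are the objects to be covered, and $\tilde{\pmb{c}}^j\le \tfrac{4}{3}\pmb{c}^i$ reduces to a \emph{single-coordinate} test $16\le\tfrac{4}{3}c^i_j$, i.e.\ $v_j$ is covered by $v_i$---exactly the dominating-set relation. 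The nonzero floor of $9$ is also what makes the no-mixing argument (which you correctly flag as the delicate step) go through. Separately, your NP-membership argument is loose: the constraints of Cont are themselves bilinear, so optimality at a vertex of the product polytope does not follow from the standard bilinear-objective argument; a cleaner certificate is the discrete assignment structure, after which feasibility is verified by a linear program.
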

\begin{proof}
Consider the following yes/no decision version of Cont: Given scenarios $\cU = \{\pmb{c}^1,\ldots,\pmb{c}^N\}$ as well as a number of clusters $K$ and an approximation guarantee $t$, is there a reduced set $\mathcal{C}=\{\hat{\pmb{c}}^1,\ldots,\hat{\pmb{c}}^K\}$ in the convex hull of $\cU$, such that for all $i\in[N]$ there exists a $\hat{\pmb{c}}\in\text{conv}(\C)$ such that $\pmb{c}^i \le t\cdot \hat{\pmb{c}}$?

We use a reduction from the dominating set problem, which is stated as follows. Given an undirected graph $G=(V,E)$, is there a subset of nodes $V'\subseteq V$ with cardinality $K$, such that for each $v\in V$, either $v\in V'$ or there is an edge $\{v,v'\}\in E$ for some $v'\in V'$?

Let an instance of dominating set be given, with $V=\{v_1,\ldots,v_n\}$. We construct a cluster-aggregation problem with the same value of $K$ and $t=4/3$ that is a yes-instance, if and only if the dominating set problem is a yes-instances.

We show a small example of the proposed reduction in Figure~\ref{fig:red}. We consider a problem dimension $n$. We create $N=2n$ scenarios of two different types. Scenarios of the first type, denoted as $\tilde{\pmb{c}}^i$ for $i\in[n]$, are 0 everywhere, except for the $i$th entry, which is 16. Scenarios of the second type, denoted as $\pmb{c}^i$ for $i\in[n]$, are 9 everywhere, except for those entries corresponding to nodes that can be covered by node $v_i\in V$, where we set the value to be 12. Note that for $t=4/3$, $\pmb{c}^j \le t\pmb{c}^i$ for all $i,j\in[n]$. Furthermore, we have $\tilde{\pmb{c}}^j \le t\pmb{c}^i$ for all nodes $v_j$ that can be covered by node $v_i$.
Intuitively, the idea of this construction is that we need to choose the right scenarios of the second type to cover all scenarios of the first type.

\begin{figure}[htb]
\begin{center}
\includegraphics[width=0.7\textwidth]{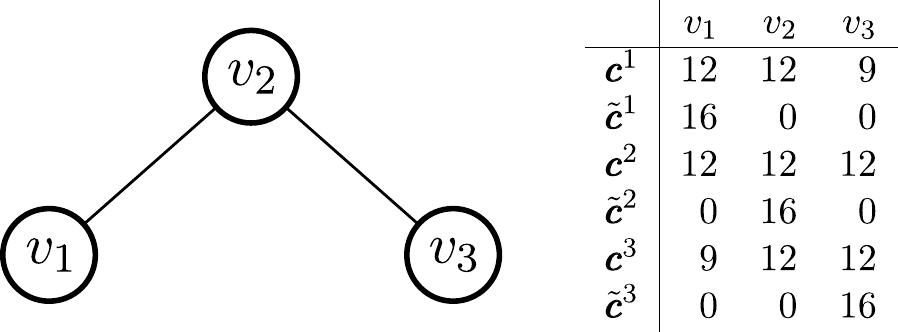}
\caption{Example reduction.}\label{fig:red}
\end{center}
\end{figure}

We now prove that the Cont instance is a yes-instance, if and only if the dominating set instance is a yes-instance.

For the first direction, let us assume that there is a dominating set $V'=\{v_{i_1},\ldots,v_{i_K}\}$ of cardinality $K$. We construct a solution to Cont by choosing $\C=\pmb{c}^{i_1},\ldots,\pmb{c}^{i_{K}}$. By construction, it holds that all of the $N$ scenarios are dominated by at least one of these scenarios scaled by factor $t=4/3$.

For the other direction, let us assume that there exists $\C=\hat{\pmb{c}}^1,\ldots,\hat{\pmb{c}}^K$ in the convex hull of $\cU$ such that each scenario $\pmb{c}\in\cU$ is dominated by a convex combination of scenarios in $\C$ scaled by $t=4/3$. Let us write $\hat{\pmb{c}}^k = \sum_{i\in[n]} \lambda^k_i \pmb{c}^i + \sum_{i\in[n]} \tilde{\lambda}^k_i \tilde{\pmb{c}}^i$ with $\sum_{i\in[n]} \lambda^k_i + \tilde{\lambda}^k_i = 1$. 

We first show that we can assume $\tilde{\lambda}^k_i=0$ for all $i\in[n]$ and $k\in[K]$. To this end, let us assume that $\tilde{\lambda}^k_i > 0$ for some $i\in[n]$ and some $k\in[K]$. Consider scenario $\tilde{\pmb{c}}^i$ and its dominating convex combination of scenarios in $\C$, given as $\sum_{k\in[K]} \mu_k\hat{\pmb{c}}^k$. Observe that scenario $\tilde{\pmb{c}}^i$ is zero everywhere, except for dimension $i$. Hence, if any such choice of values $\mu_k$ exists, we can assume without loss of generality that $\mu_k=1$ for the one scenario $\hat{\pmb{c}}^k$ that has the largest value in dimension $i$ (by assumption, at least 12). This means that each scenario $\tilde{c}^i$ is dominated by the scaled version of a single scenario $\hat{\pmb{c}}^k$. So let us assume that $\tilde{\pmb{c}}^i$ is not covered by another scenario in $\C$. We further distinguish the following cases.
\begin{itemize}
\item There is no other $j\neq i$ with $\tilde{\lambda}^k_j > 0$. This means that for all $j\neq i$, $\hat{c}^k_j < 12$. Hence, the scenario $\hat{\pmb{c}}^k$ only covers scenario $\tilde{\pmb{c}}^i$ within $t=4/3$. We can thus use scenario $\pmb{c}^i$ instead and remain feasible.

\item There is some $j\neq i$ with $\tilde{\lambda}^k_j > 0$. Without loss of generality, let $\tilde{\lambda}^k_i \ge \tilde{\lambda}^k_j$. We have that $\tilde{\lambda}^k_j \le 1/2$. Hence, $\hat{c}^k_j < 12$ and scenario $\tilde{\pmb{c}}^j$ must already be covered by another scenario in $\C$. We can thus set $\tilde{\lambda}^k_j=0$ and continue in the same way as in the first case.
\end{itemize}
We conclude that we can assume that $\tilde{\lambda}^k_i=0$ for all $i\in[n]$ and $k\in[K]$, i.e., we only need to consider convex combinations of scenarios of type $\pmb{c}^i$.

So let us assume that $\lambda^k_i > 0$ for some $i\in[n]$. We show that $\lambda^k_i=1$. To this end, let us assume further assume that $\lambda^k_j>0$ for some $j\neq n$. This means that for any node $v_\ell\in V$ not covered by both $v_i$ and $v_j$, we have $\hat{c}^k_\ell < 12$. Hence, to dominate scenario $\tilde{\pmb{c}}^\ell$, the convex combination of scenarios with $\tilde{\pmb{c}}^\ell \le t\sum_{k\in[K]}\mu_k \hat{\pmb{c}}^k$ has $\mu_k = 0$. In other words, the convex combination of multiple scenarios $\pmb{c}^i$ and $\pmb{c}^j$ means that only those scenarios $\tilde{\pmb{c}}^i$ can be covered, which are already possible to cover by using only $\pmb{c}^i$ or only $\pmb{c}^j$.

Overall, we have showed that there is an optimal solution to Cont where all $\lambda$-variables are binary and all $\mu$-variables are binary. By construction, $V'=\{v_{i_1},\ldots,v_{i_K}\}$ is therefore a dominating set, which completes the proof.

\end{proof}

\begin{corollary}
The decision versions of IP-$\mu$, IP-$\lambda$ and IP are NP-complete.
\end{corollary}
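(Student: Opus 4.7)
The plan is to observe that the same reduction used to prove Theorem~\ref{th:hardness} already certifies NP-hardness for each of IP-$\mu$, IP-$\lambda$, and IP, because the optimal solution produced in that reduction happens to satisfy all the additional integrality and membership restrictions imposed by the three models. First I would check NP-membership: in each case, a feasible solution (a choice of $\lambda$'s, $\mu$'s, and possibly the aggregate scenarios $\hat{\pmb{c}}^k$) can be verified in polynomial time by evaluating the linear constraints and checking the guarantee threshold $t$, so all three decision problems lie in NP.

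The core of the proof is the reduction. I would reuse verbatim the construction from Theorem~\ref{th:hardness}: given a dominating set instance $G=(V,E)$ with $|V|=n$, produce the $N=2n$ scenarios $\pmb{c}^i$ (value $12$ on coordinates covered by $v_i$, value $9$ elsewhere) and $\tilde{\pmb{c}}^i$ (value $16$ in coordinate $i$, $0$ elsewhere), together with the same $K$ and threshold $t=4/3$. The key observation is that the argument in the proof of Theorem~\ref{th:hardness} does not merely exhibit one optimal Cont solution; it shows that every optimal solution can be taken to satisfy (i) $\mathcal{C}\subseteq\{\pmb{c}^1,\ldots,\pmb{c}^n\}\subseteq\cU$, (ii) all $\lambda$-variables binary, and (iii) all $\mu$-variables binary (each $\pmb{c}^i$ is dominated by a single scaled scenario from $\mathcal{C}$, not by a proper convex combination).

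From these three structural properties the equivalence follows by inspection of each model. For IP-$\mu$, the binary-$\mu$ restriction is met and the construction produces the desired $\lambda$-values (which can just be the unit vectors corresponding to the chosen $\pmb{c}^{i_k}$); hence the optimum of IP-$\mu$ on the instance coincides with that of Cont. For IP-$\lambda$, the binary-$\lambda$ restriction is met and the binary $\mu$ solution is also feasible since it respects $\mu_{i\ell}\le \lambda_\ell$; thus its optimum also equals that of Cont. For IP (two-stage), both $\lambda$ and $\mu$ are binary and $\mathcal{C}\subseteq\cU$, matching the stricter requirements from Theorem~\ref{th2stage}; in particular each $\tilde{\pmb{c}}^i$ is dominated by a scaled copy of a single $\pmb{c}^{i_k}\in\cU$ rather than by a convex combination, which is precisely what IP demands. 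Consequently, a dominating set of size $K$ exists if and only if the corresponding decision instance of each model admits a value $t\ge 4/3$, proving NP-hardness.

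The only subtle step, and the part I would write out most carefully, is the justification that the structural properties established inside the proof of Theorem~\ref{th:hardness} are genuinely shared by all three models and in particular by IP, where the admissible class of clusterings is strictly smaller. Once this is made explicit, the three hardness claims fall out simultaneously as direct corollaries, which is exactly why they are packaged as a single corollary in the paper.
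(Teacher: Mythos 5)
Your proposal is correct and follows exactly the paper's own argument: the paper's proof of this corollary is the two-sentence observation that the instance constructed in the proof of Theorem~\ref{th:hardness} admits an optimal solution with all $\mu$- and $\lambda$-variables binary (and $\mathcal{C}\subseteq\cU$), so the same reduction certifies hardness for all three restricted variants. You simply spell out the same reasoning in more detail, including the routine NP-membership check.
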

\begin{proof}
In the proof of Theorem~\ref{th:hardness}, we constructed an instance where we can assume all $\mu$ and $\lambda$ variables to be binary in an optimal solution. Hence, the same proof applies to these problem variants as well.
\end{proof}

\section{Additional Experimental Results}

While the plots in Sections~\ref{subsec:expone} and \ref{subsec:exptwo} focus on the time to solve the reduction problem and the resulting objective value, here we show additional data on the time that is required to solve the resulting robust optimization problems. Recall that a time limit of 60 seconds was used in the experiments, which truncates the average solution times. In Figures~\ref{selonesoltime} and \ref{vconesoltime}, we show solution times for the one-stage problem, while Figures~\ref{seltwosoltime} and \ref{vctwosoltime} show results on two-stage problems.

\begin{figure}[htbp]
\begin{center}
\subfigure[$n=20$, $N=10$]{\includegraphics[width=0.3\textwidth]{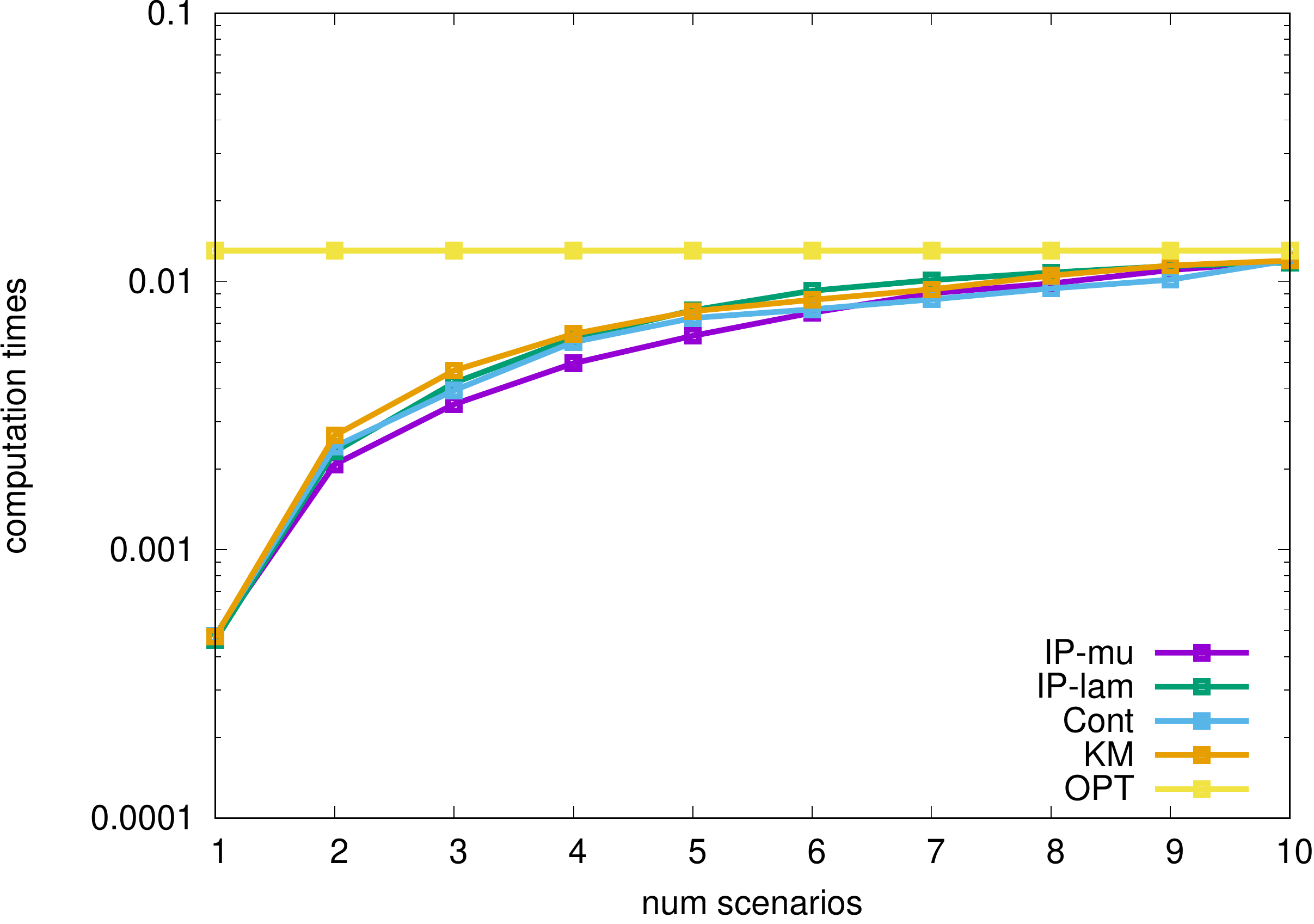}}%
\subfigure[$n=150$, $N=10$]{\includegraphics[width=0.3\textwidth]{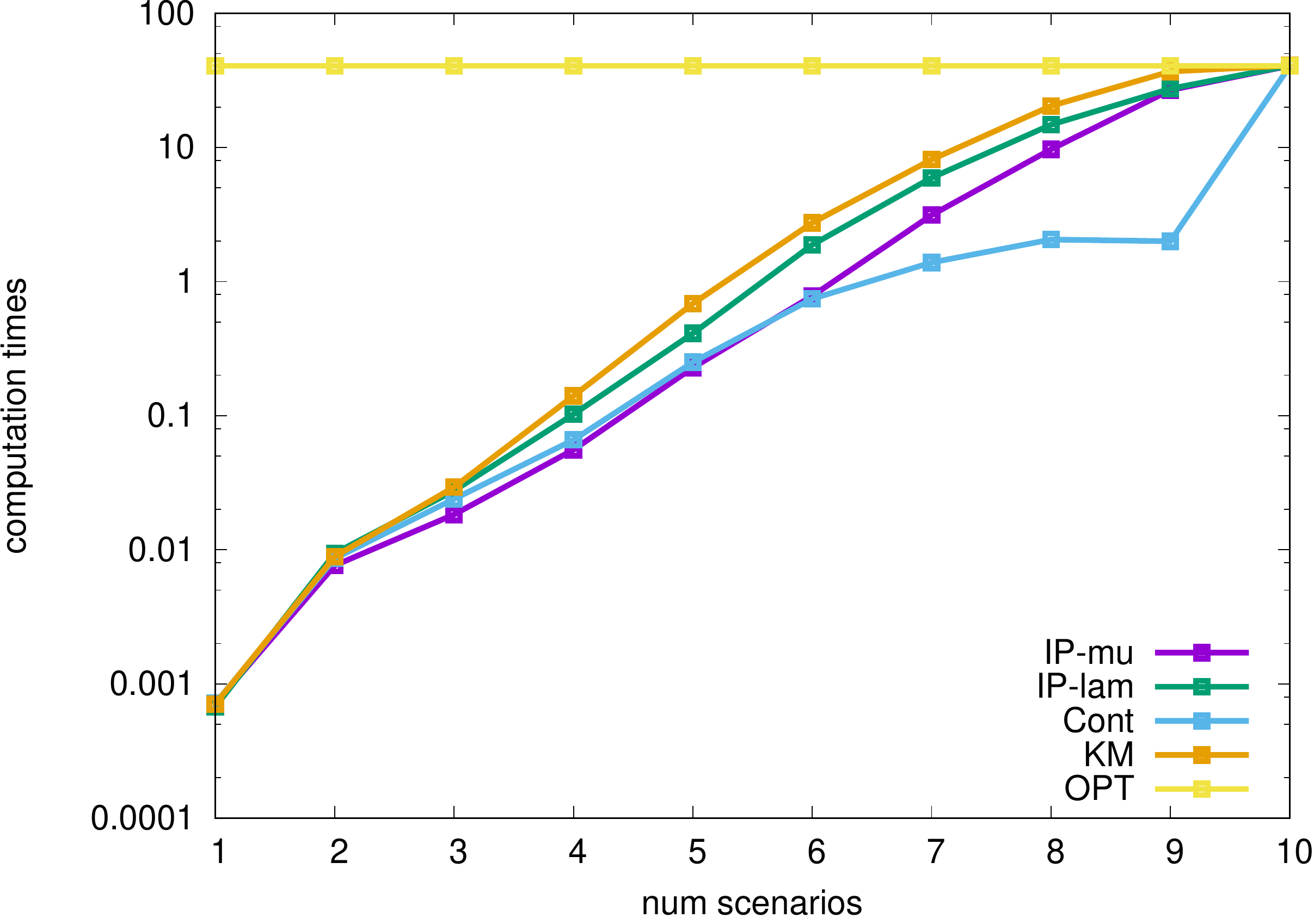}}
\subfigure[$n=20$, $N=50$]{\includegraphics[width=0.3\textwidth]{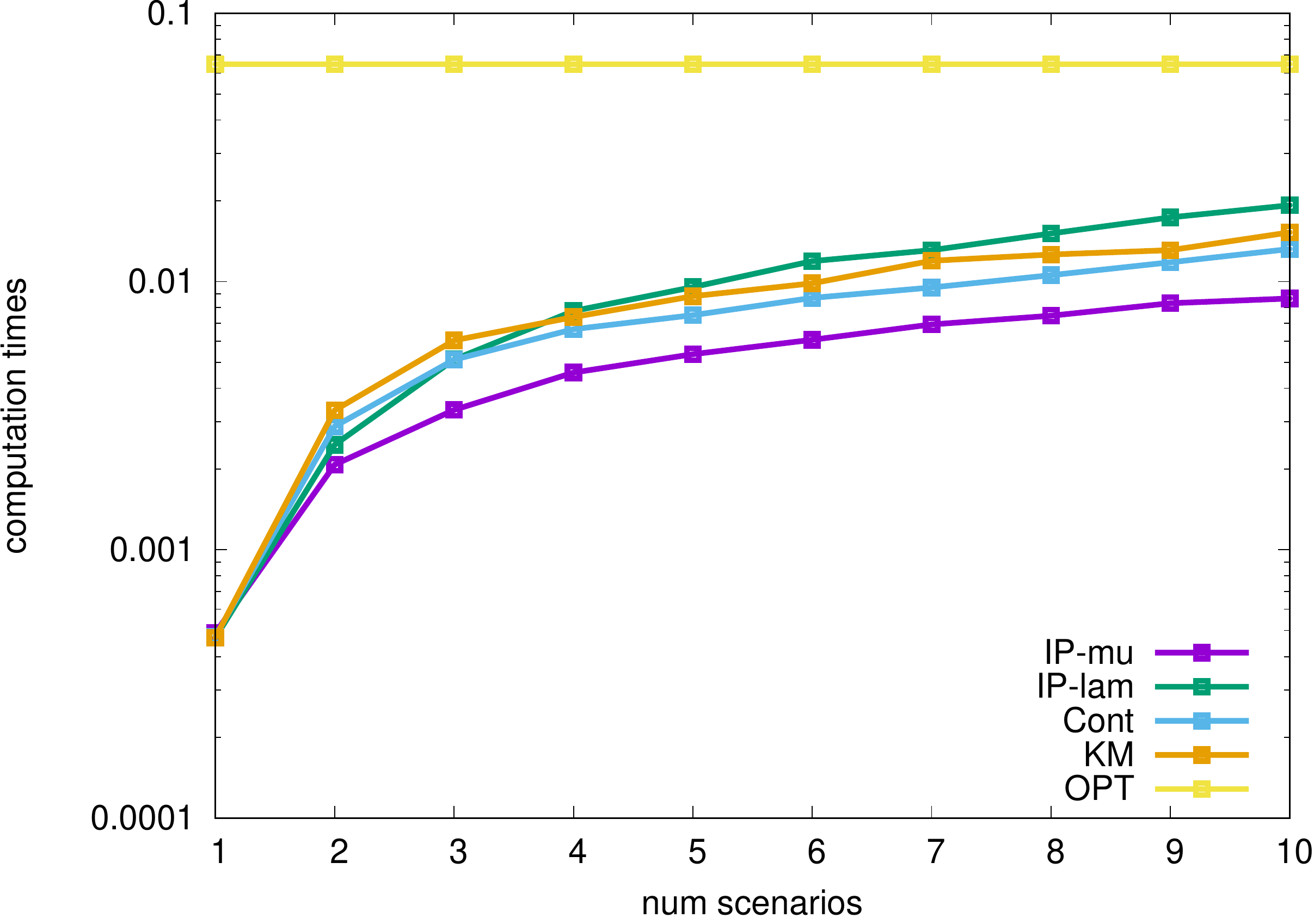}}%
\subfigure[$n=150$, $N=50$]{\includegraphics[width=0.3\textwidth]{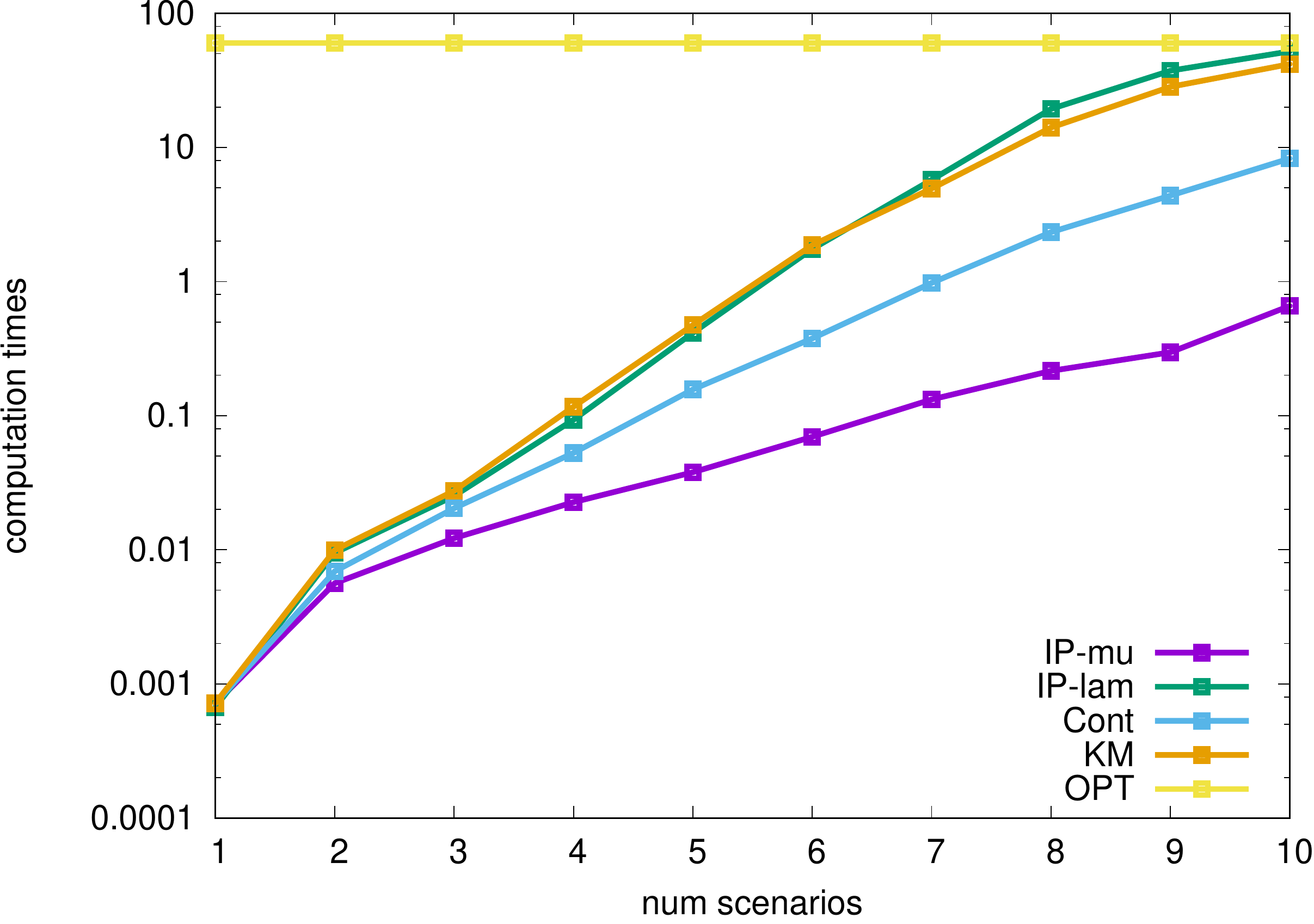}}
\end{center}
\caption{One-stage selection, average solution times.}\label{selonesoltime}
\end{figure}

\begin{figure}[htbp]
\begin{center}
\subfigure[$n=20$, $N=10$]{\includegraphics[width=0.3\textwidth]{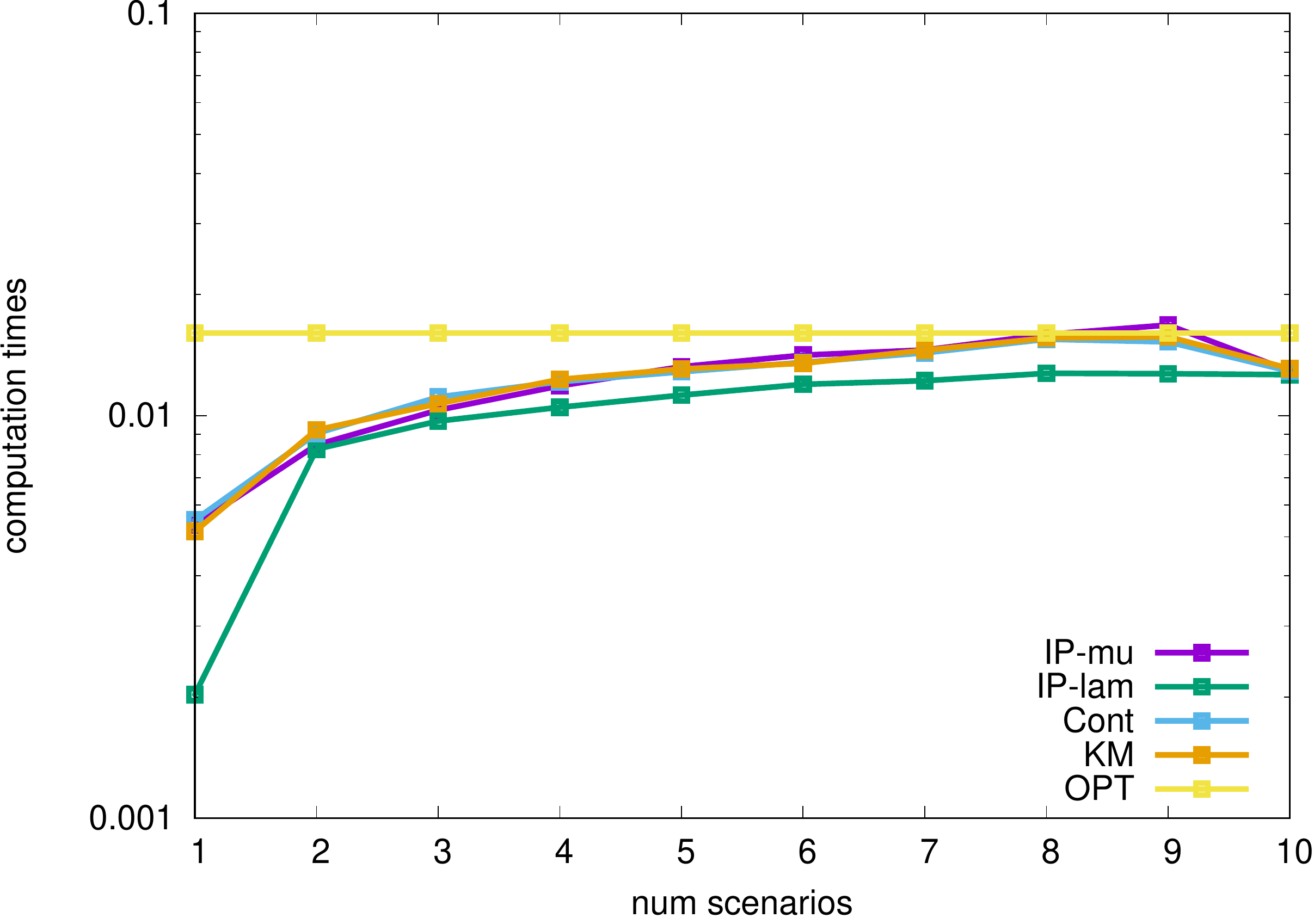}}%
\subfigure[$n=150$, $N=10$]{\includegraphics[width=0.3\textwidth]{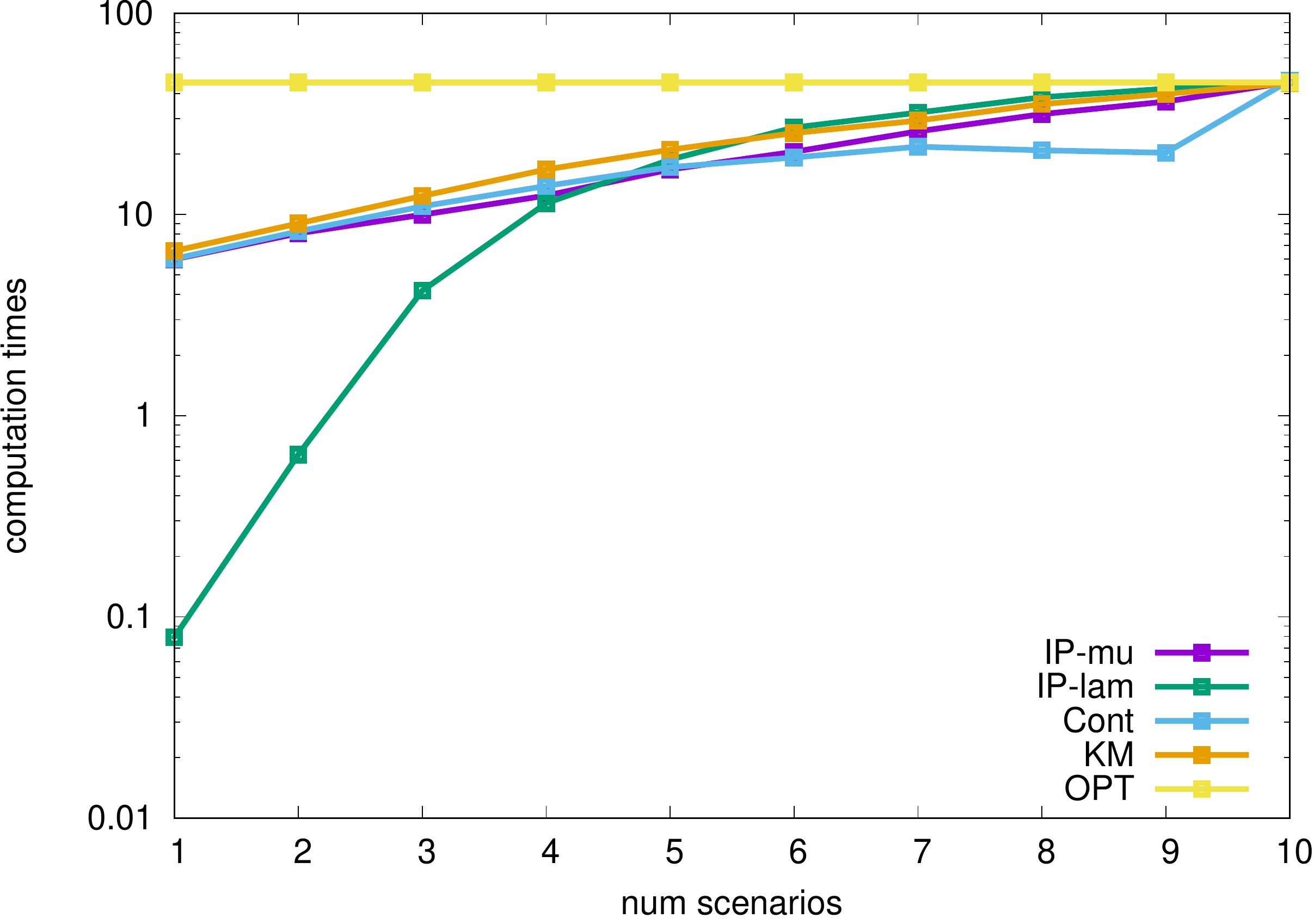}}
\subfigure[$n=20$, $N=50$]{\includegraphics[width=0.3\textwidth]{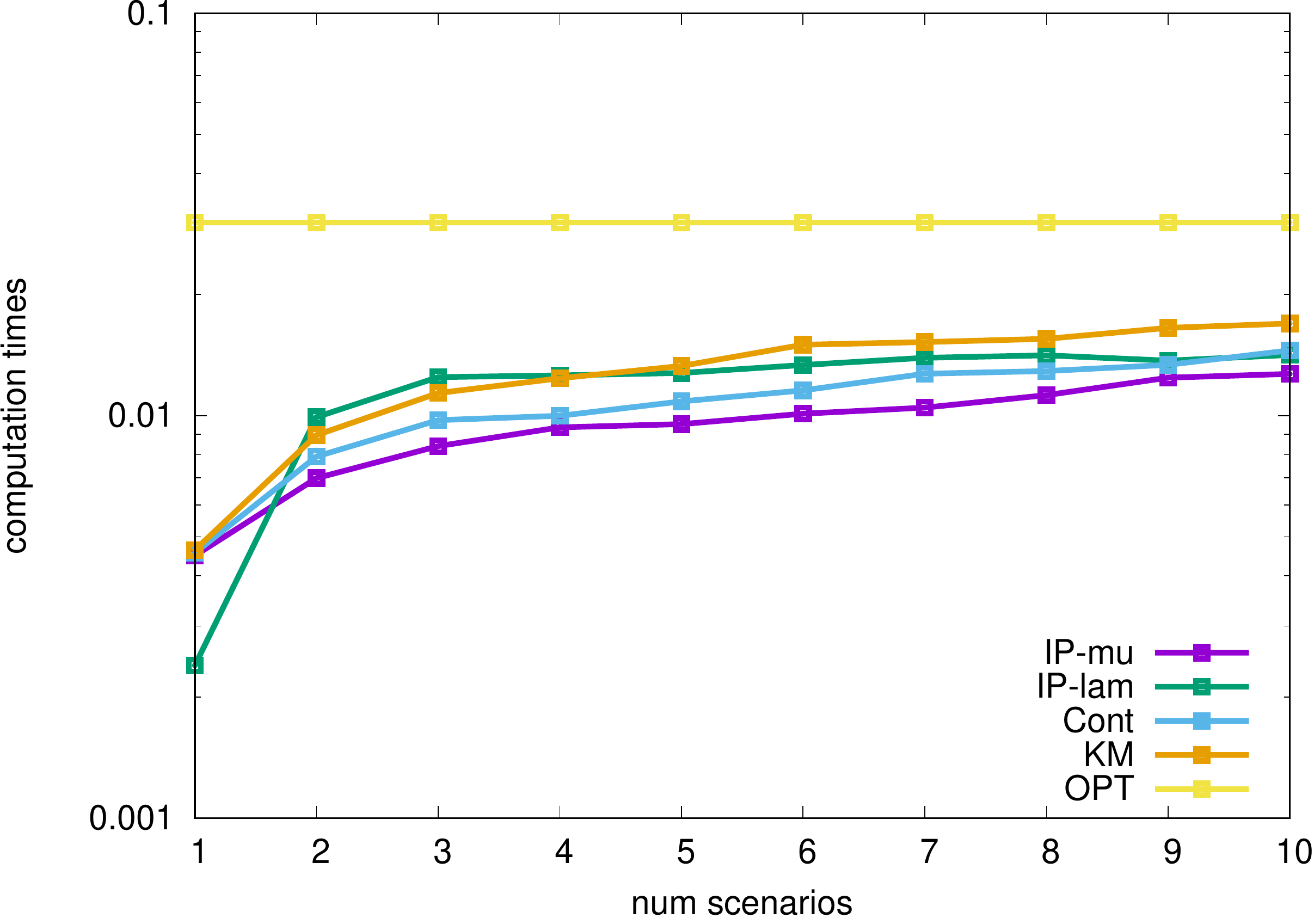}}%
\subfigure[$n=150$, $N=50$]{\includegraphics[width=0.3\textwidth]{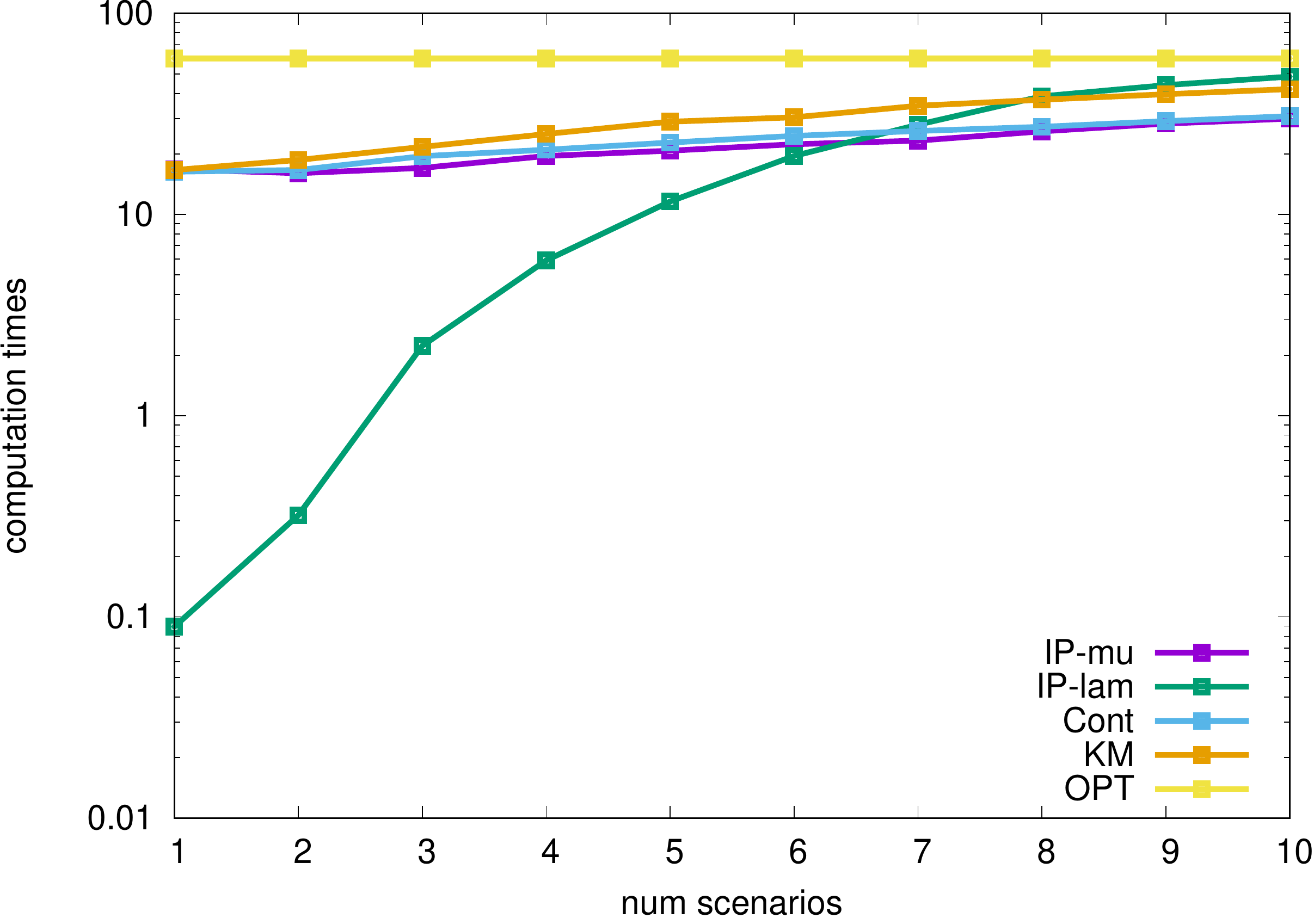}}
\end{center}
\caption{One-stage vertex cover, average solution times.}\label{vconesoltime}
\end{figure}

\begin{figure}[htbp]
\begin{center}
\subfigure[$n=20$, $N=10$]{\includegraphics[width=0.3\textwidth]{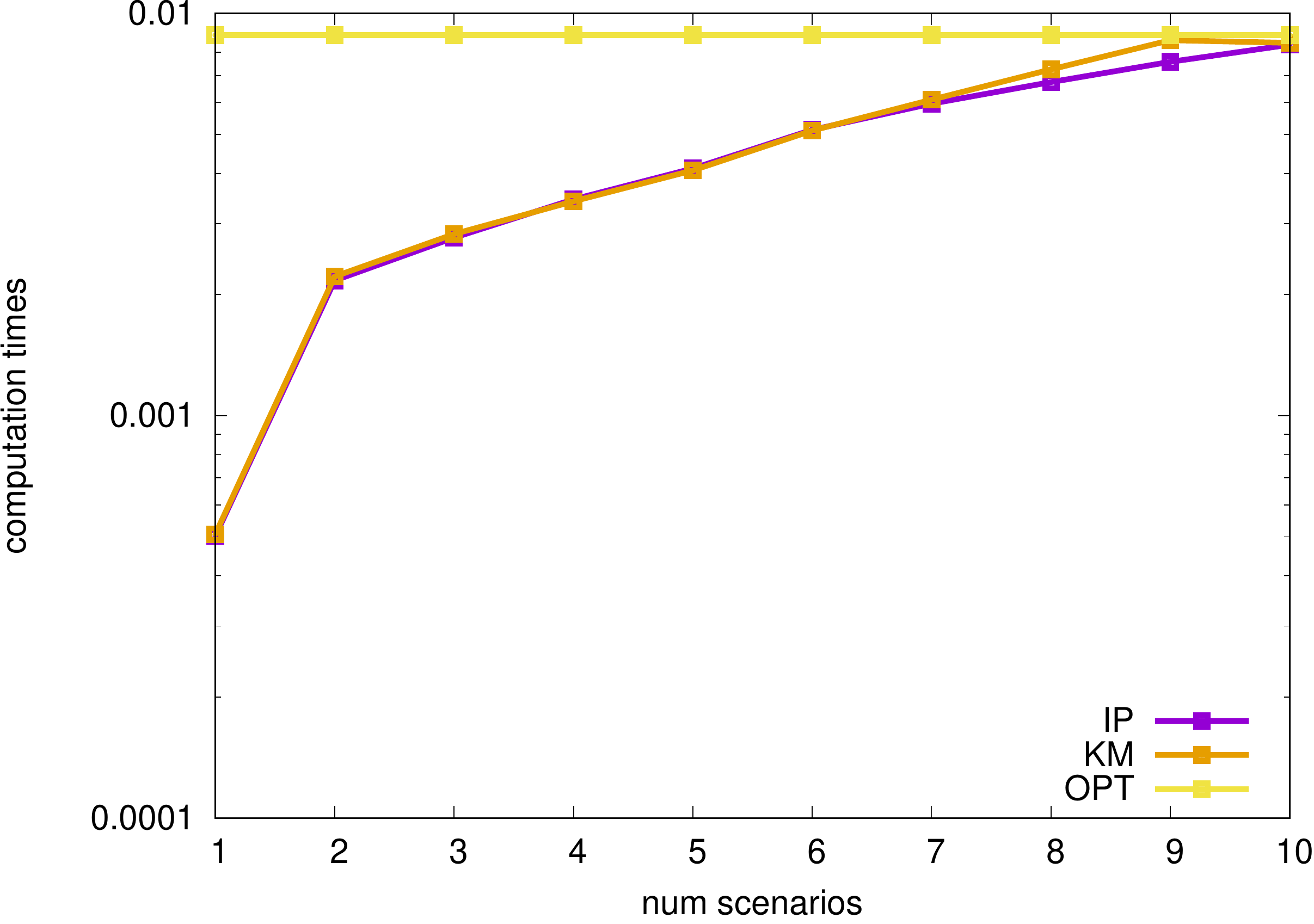}}%
\subfigure[$n=150$, $N=10$]{\includegraphics[width=0.3\textwidth]{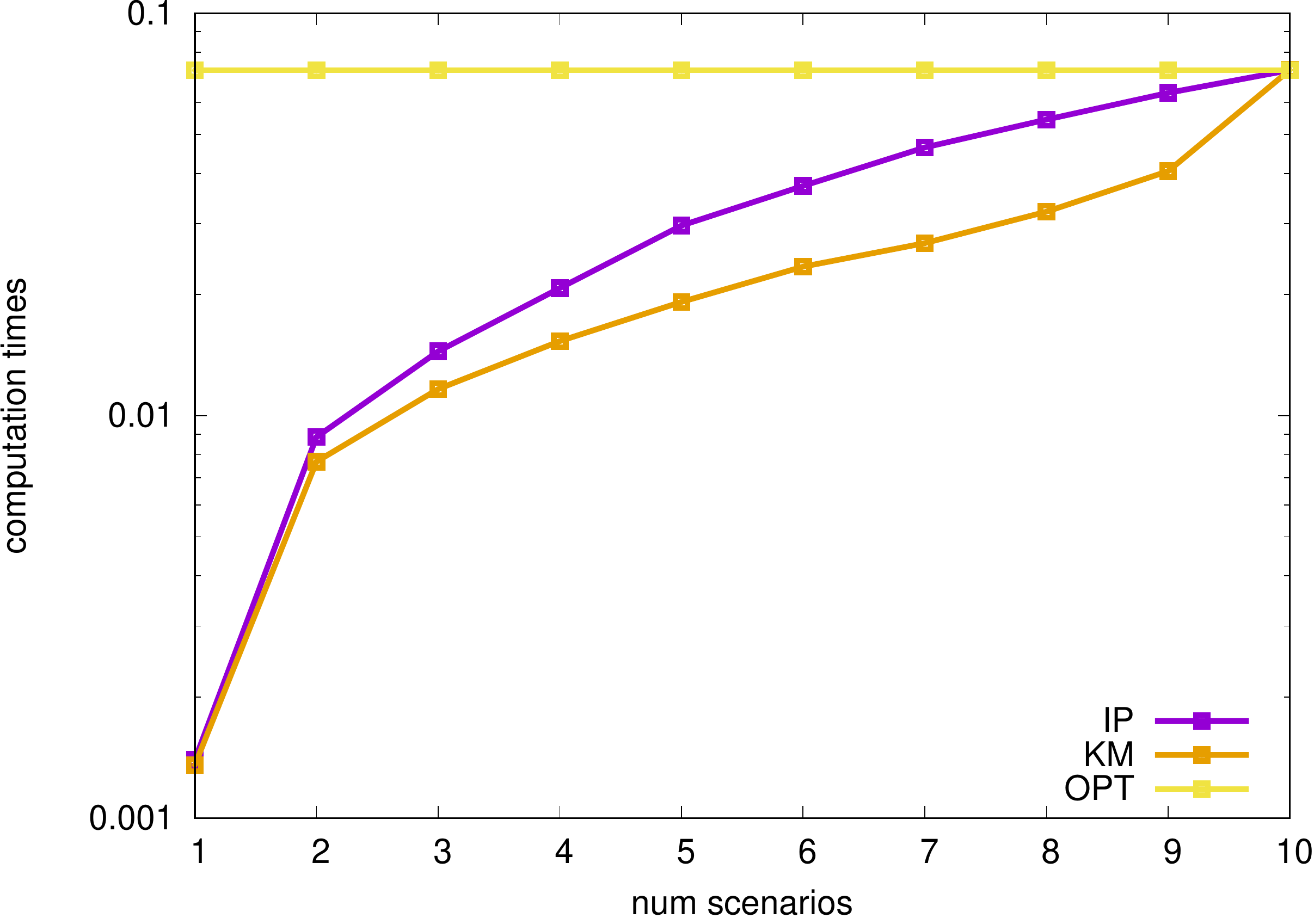}}
\subfigure[$n=20$, $N=50$]{\includegraphics[width=0.3\textwidth]{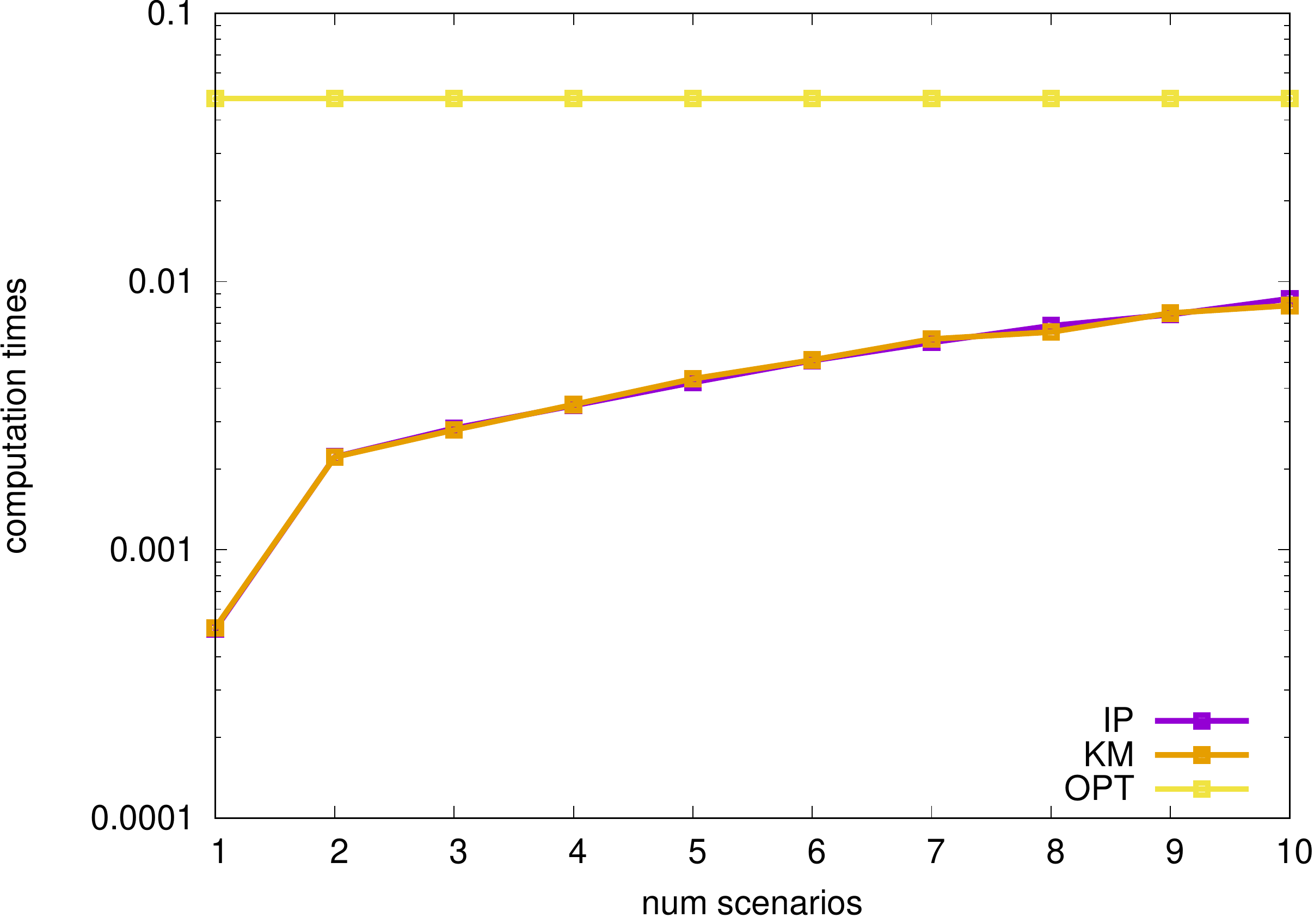}}%
\subfigure[$n=150$, $N=50$]{\includegraphics[width=0.3\textwidth]{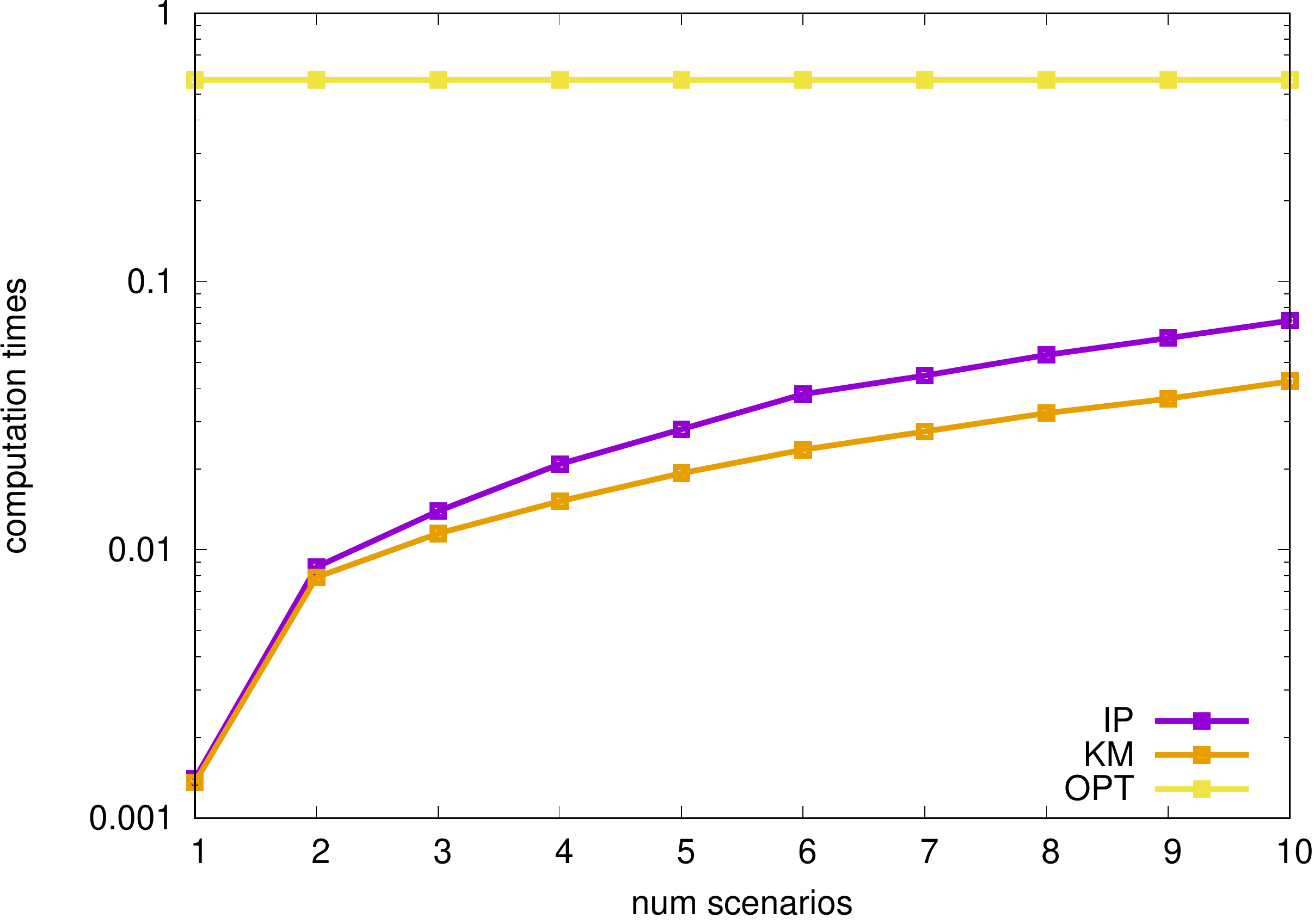}}
\end{center}
\caption{Two-stage selection, average solution times.}\label{seltwosoltime}
\end{figure}

\begin{figure}[htbp]
\begin{center}
\subfigure[$n=20$, $N=10$]{\includegraphics[width=0.3\textwidth]{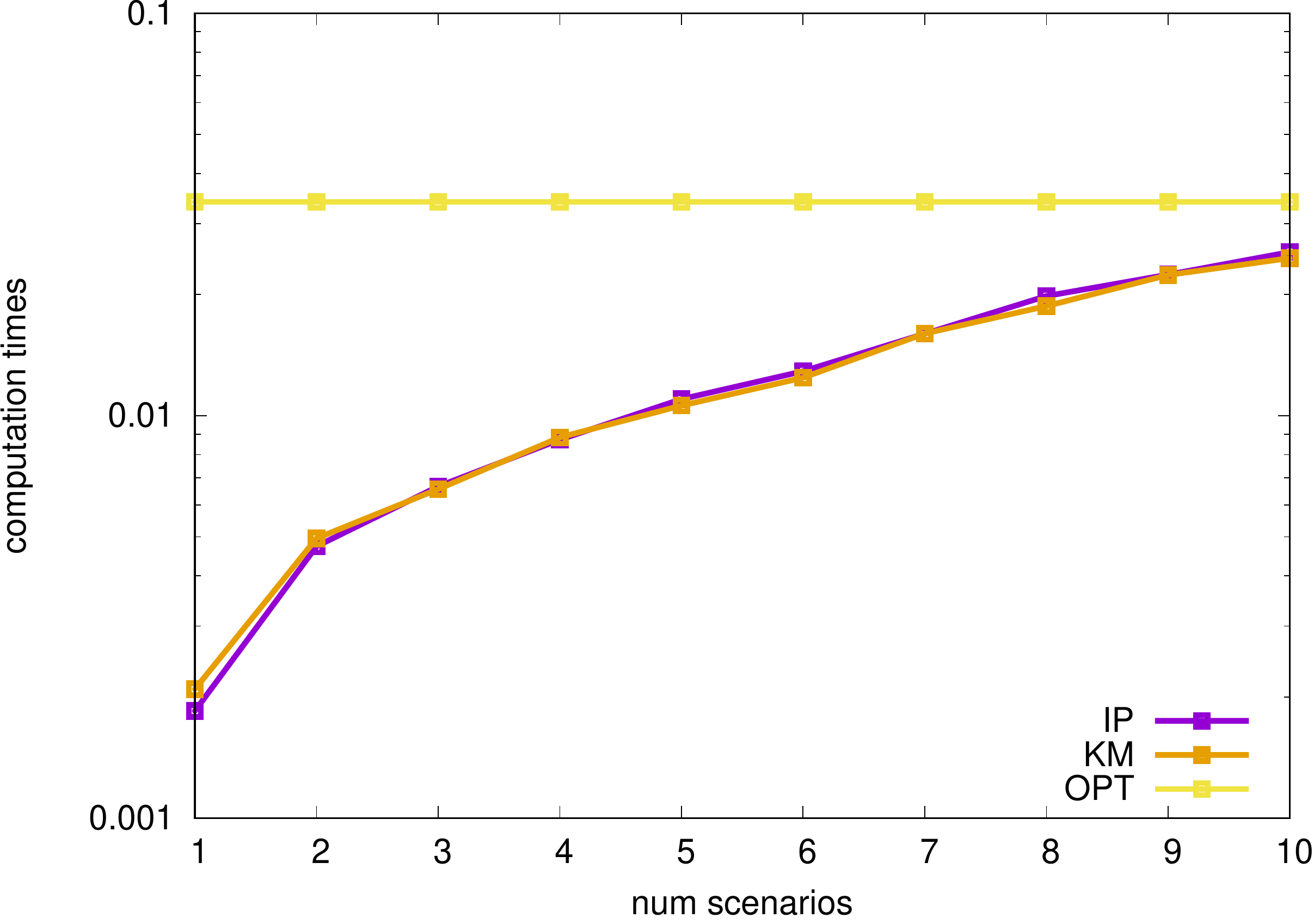}}%
\subfigure[$n=150$, $N=10$]{\includegraphics[width=0.3\textwidth]{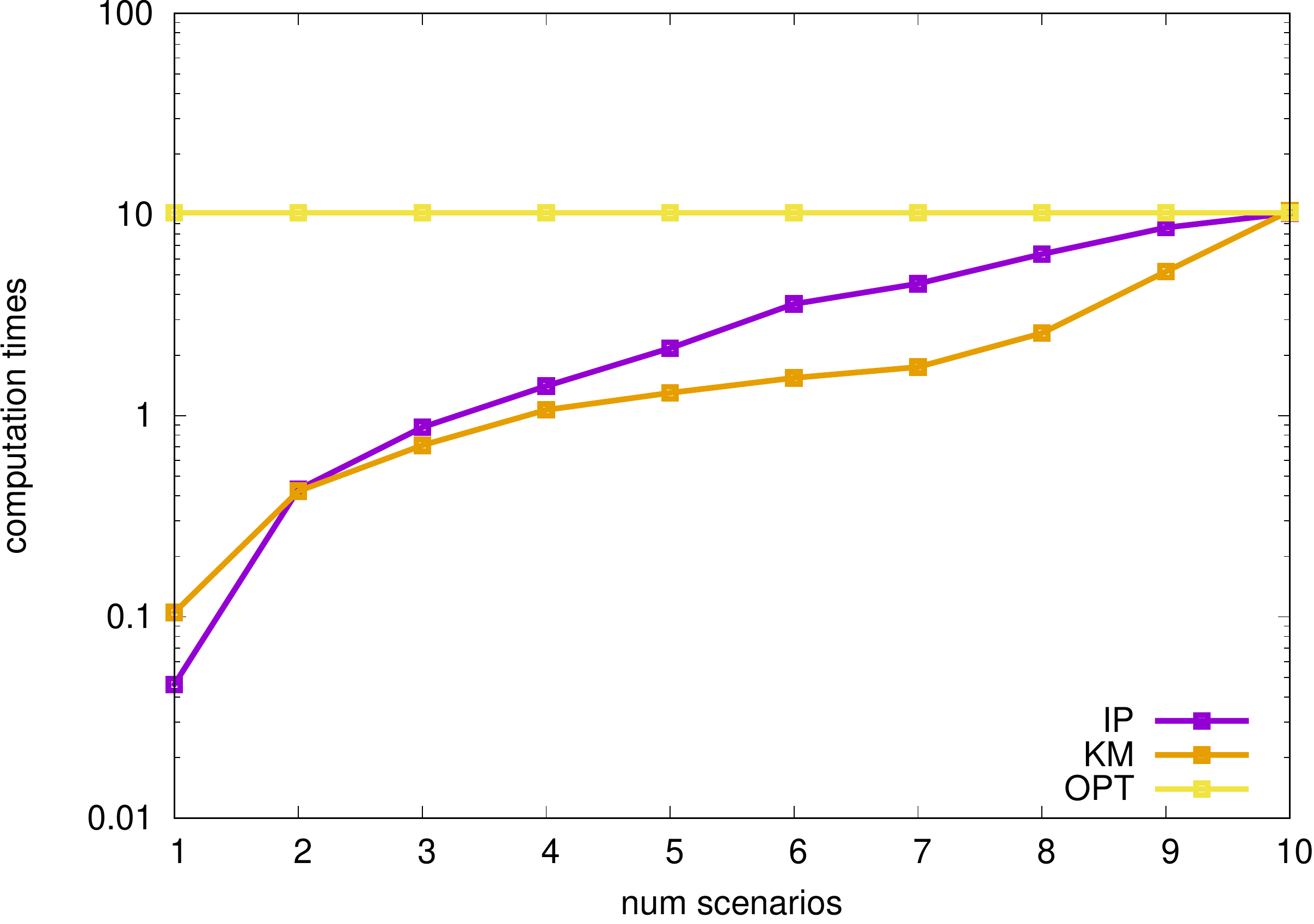}}
\subfigure[$n=20$, $N=50$]{\includegraphics[width=0.3\textwidth]{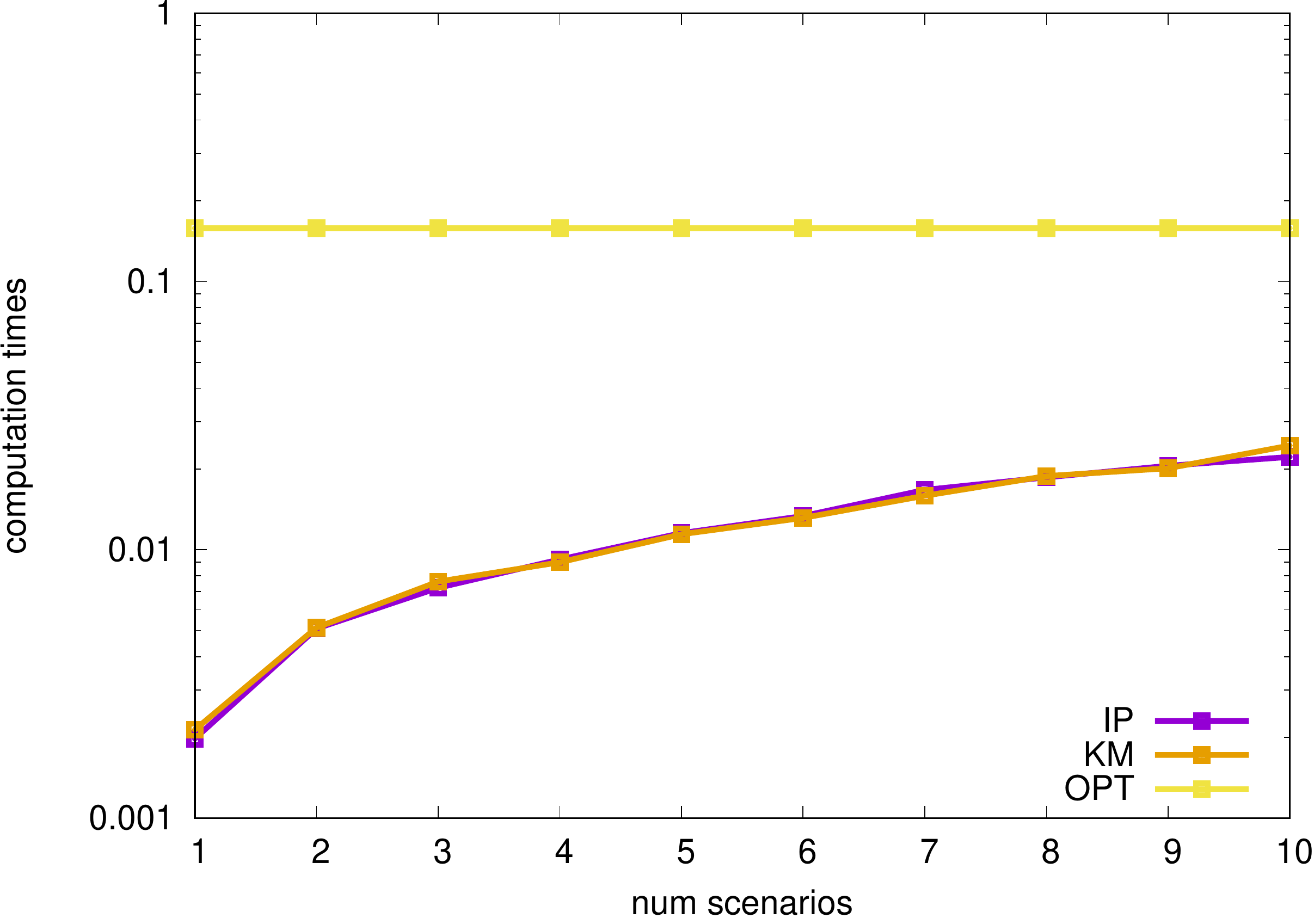}}%
\subfigure[$n=150$, $N=50$]{\includegraphics[width=0.3\textwidth]{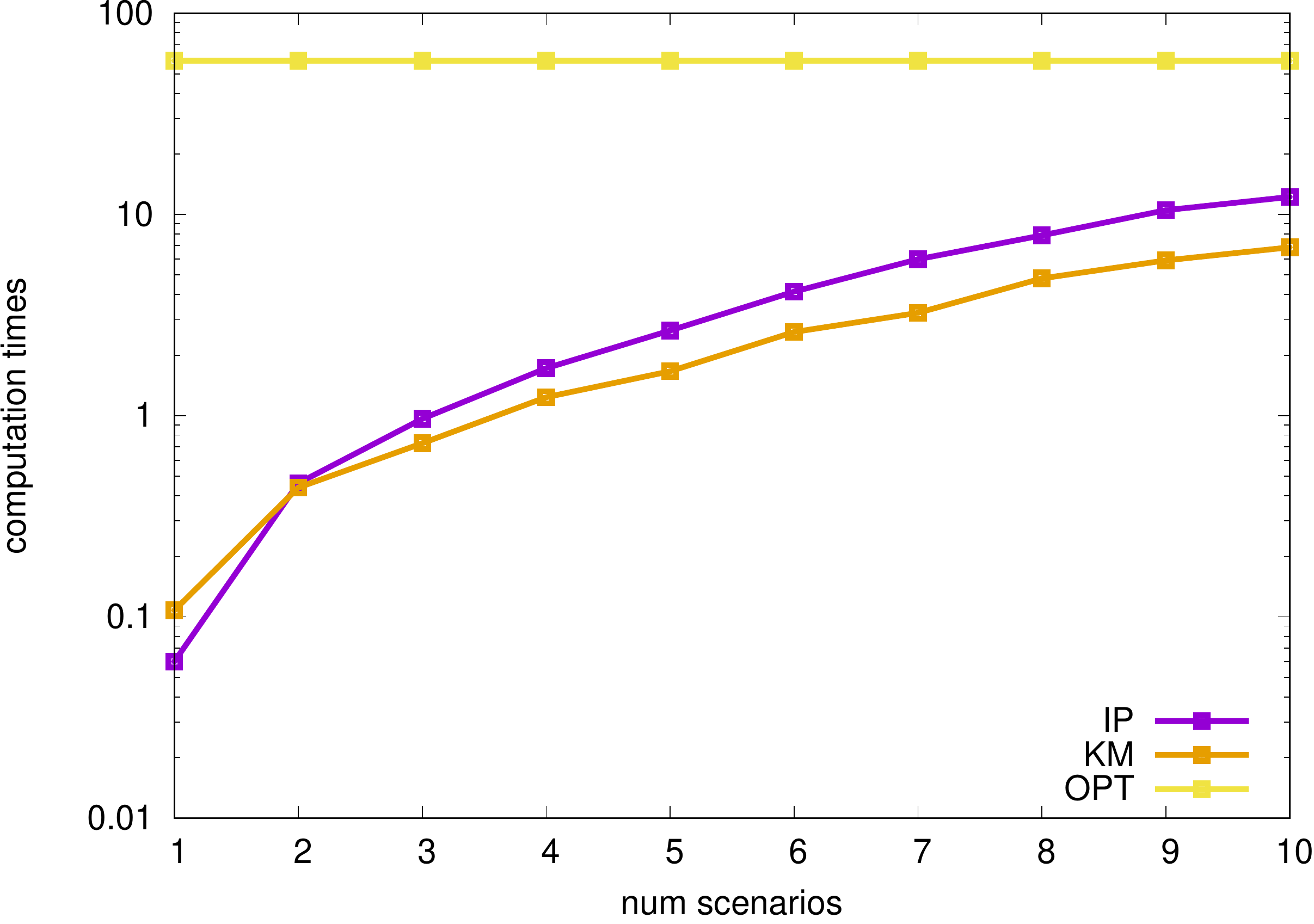}}
\end{center}
\caption{Two-stage vertex cover, average solution times.}\label{vctwosoltime}
\end{figure}


\begin{thebibliography}{}

\bibitem [\protect \citeauthoryear {%
Aissi%
, Bazgan%
\BCBL {}\ \BBA {} Vanderpooten%
}{%
Aissi%
\ \protect \BOthers {.}}{%
{\protect \APACyear {2009}}%
}]{%
aissi2009min}
\APACinsertmetastar {%
aissi2009min}%
\begin{APACrefauthors}%
Aissi, H.%
, Bazgan, C.%
\BCBL {}\ \BBA {} Vanderpooten, D.%
\end{APACrefauthors}%
\unskip\
\newblock
\APACrefYearMonthDay{2009}{}{}.
\newblock
{\BBOQ}\APACrefatitle {Min--max and min--max regret versions of combinatorial
  optimization problems: A survey} {Min--max and min--max regret versions of
  combinatorial optimization problems: A survey}.{\BBCQ}
\newblock
\APACjournalVolNumPages{European Journal of Operational
  Research}{197}{2}{427--438}.
\PrintBackRefs{\CurrentBib}

\bibitem [\protect \citeauthoryear {%
Aissi%
, Bazgan%
\BCBL {}\ \BBA {} Vanderpooten%
}{%
Aissi%
\ \protect \BOthers {.}}{%
{\protect \APACyear {2010}}%
}]{%
aissi2010general}
\APACinsertmetastar {%
aissi2010general}%
\begin{APACrefauthors}%
Aissi, H.%
, Bazgan, C.%
\BCBL {}\ \BBA {} Vanderpooten, D.%
\end{APACrefauthors}%
\unskip\
\newblock
\APACrefYearMonthDay{2010}{}{}.
\newblock
{\BBOQ}\APACrefatitle {General approximation schemes for min--max (regret)
  versions of some (pseudo-) polynomial problems} {General approximation
  schemes for min--max (regret) versions of some (pseudo-) polynomial
  problems}.{\BBCQ}
\newblock
\APACjournalVolNumPages{Discrete Optimization}{7}{3}{136--148}.
\PrintBackRefs{\CurrentBib}

\bibitem [\protect \citeauthoryear {%
Averbakh%
}{%
Averbakh%
}{%
{\protect \APACyear {2001}}%
}]{%
averbakh2001complexity}
\APACinsertmetastar {%
averbakh2001complexity}%
\begin{APACrefauthors}%
Averbakh, I.%
\end{APACrefauthors}%
\unskip\
\newblock
\APACrefYearMonthDay{2001}{}{}.
\newblock
{\BBOQ}\APACrefatitle {On the complexity of a class of combinatorial
  optimization problems with uncertainty} {On the complexity of a class of
  combinatorial optimization problems with uncertainty}.{\BBCQ}
\newblock
\APACjournalVolNumPages{Mathematical Programming}{90}{2}{263--272}.
\PrintBackRefs{\CurrentBib}

\bibitem [\protect \citeauthoryear {%
Ben-Tal%
, El~Ghaoui%
\BCBL {}\ \BBA {} Nemirovski%
}{%
Ben-Tal%
\ \protect \BOthers {.}}{%
{\protect \APACyear {2009}}%
}]{%
ben2009robust}
\APACinsertmetastar {%
ben2009robust}%
\begin{APACrefauthors}%
Ben-Tal, A.%
, El~Ghaoui, L.%
\BCBL {}\ \BBA {} Nemirovski, A.%
\end{APACrefauthors}%
\unskip\
\newblock
\APACrefYear{2009}.
\newblock
\APACrefbtitle {Robust optimization} {Robust optimization}\ (\BVOL~28).
\newblock
\APACaddressPublisher{}{Princeton University Press}.
\PrintBackRefs{\CurrentBib}

\bibitem [\protect \citeauthoryear {%
Bertsimas%
\ \BBA {} Mundru%
}{%
Bertsimas%
\ \BBA {} Mundru%
}{%
{\protect \APACyear {2022}}%
}]{%
bertsimas2022optimization}
\APACinsertmetastar {%
bertsimas2022optimization}%
\begin{APACrefauthors}%
Bertsimas, D.%
\BCBT {}\ \BBA {} Mundru, N.%
\end{APACrefauthors}%
\unskip\
\newblock
\APACrefYearMonthDay{2022}{}{}.
\newblock
{\BBOQ}\APACrefatitle {Optimization-Based Scenario Reduction for Data-Driven
  Two-Stage Stochastic Optimization} {Optimization-based scenario reduction for
  data-driven two-stage stochastic optimization}.{\BBCQ}
\newblock
\APACjournalVolNumPages{Operations Research}{}{}{}.
\newblock
\APACrefnote{Articles in advance.}
\PrintBackRefs{\CurrentBib}

\bibitem [\protect \citeauthoryear {%
Chassein%
\ \BBA {} Goerigk%
}{%
Chassein%
\ \BBA {} Goerigk%
}{%
{\protect \APACyear {2018}}%
}]{%
chassein2018scenario}
\APACinsertmetastar {%
chassein2018scenario}%
\begin{APACrefauthors}%
Chassein, A.%
\BCBT {}\ \BBA {} Goerigk, M.%
\end{APACrefauthors}%
\unskip\
\newblock
\APACrefYearMonthDay{2018}{}{}.
\newblock
{\BBOQ}\APACrefatitle {On scenario aggregation to approximate robust
  combinatorial optimization problems} {On scenario aggregation to approximate
  robust combinatorial optimization problems}.{\BBCQ}
\newblock
\APACjournalVolNumPages{Optimization Letters}{12}{7}{1523--1533}.
\PrintBackRefs{\CurrentBib}

\bibitem [\protect \citeauthoryear {%
Dupa{\v{c}}ov{\'a}%
, Gr{\"o}we-Kuska%
\BCBL {}\ \BBA {} R{\"o}misch%
}{%
Dupa{\v{c}}ov{\'a}%
\ \protect \BOthers {.}}{%
{\protect \APACyear {2003}}%
}]{%
dupavcova2003scenario}
\APACinsertmetastar {%
dupavcova2003scenario}%
\begin{APACrefauthors}%
Dupa{\v{c}}ov{\'a}, J.%
, Gr{\"o}we-Kuska, N.%
\BCBL {}\ \BBA {} R{\"o}misch, W.%
\end{APACrefauthors}%
\unskip\
\newblock
\APACrefYearMonthDay{2003}{}{}.
\newblock
{\BBOQ}\APACrefatitle {Scenario reduction in stochastic programming} {Scenario
  reduction in stochastic programming}.{\BBCQ}
\newblock
\APACjournalVolNumPages{Mathematical Programming}{95}{3}{493--511}.
\PrintBackRefs{\CurrentBib}

\bibitem [\protect \citeauthoryear {%
Fairbrother%
, Turner%
\BCBL {}\ \BBA {} Wallace%
}{%
Fairbrother%
\ \protect \BOthers {.}}{%
{\protect \APACyear {2022}}%
}]{%
fairbrother2019problem}
\APACinsertmetastar {%
fairbrother2019problem}%
\begin{APACrefauthors}%
Fairbrother, J.%
, Turner, A.%
\BCBL {}\ \BBA {} Wallace, S\BPBI W.%
\end{APACrefauthors}%
\unskip\
\newblock
\APACrefYearMonthDay{2022}{}{}.
\newblock
{\BBOQ}\APACrefatitle {Problem-driven scenario generation: an analytical
  approach for stochastic programs with tail risk measure} {Problem-driven
  scenario generation: an analytical approach for stochastic programs with tail
  risk measure}.{\BBCQ}
\newblock
\APACjournalVolNumPages{Mathematical Programming}{191}{}{141--182}.
\PrintBackRefs{\CurrentBib}

\bibitem [\protect \citeauthoryear {%
Goerigk%
\ \BBA {} Hughes%
}{%
Goerigk%
\ \BBA {} Hughes%
}{%
{\protect \APACyear {2019}}%
}]{%
goerigk2019representative}
\APACinsertmetastar {%
goerigk2019representative}%
\begin{APACrefauthors}%
Goerigk, M.%
\BCBT {}\ \BBA {} Hughes, M.%
\end{APACrefauthors}%
\unskip\
\newblock
\APACrefYearMonthDay{2019}{}{}.
\newblock
{\BBOQ}\APACrefatitle {Representative scenario construction and preprocessing
  for robust combinatorial optimization problems} {Representative scenario
  construction and preprocessing for robust combinatorial optimization
  problems}.{\BBCQ}
\newblock
\APACjournalVolNumPages{Optimization Letters}{13}{6}{1417--1431}.
\PrintBackRefs{\CurrentBib}

\bibitem [\protect \citeauthoryear {%
Heitsch%
\ \BBA {} R{\"o}misch%
}{%
Heitsch%
\ \BBA {} R{\"o}misch%
}{%
{\protect \APACyear {2003}}%
}]{%
heitsch2003scenario}
\APACinsertmetastar {%
heitsch2003scenario}%
\begin{APACrefauthors}%
Heitsch, H.%
\BCBT {}\ \BBA {} R{\"o}misch, W.%
\end{APACrefauthors}%
\unskip\
\newblock
\APACrefYearMonthDay{2003}{}{}.
\newblock
{\BBOQ}\APACrefatitle {Scenario reduction algorithms in stochastic programming}
  {Scenario reduction algorithms in stochastic programming}.{\BBCQ}
\newblock
\APACjournalVolNumPages{Computational Optimization and
  Applications}{24}{2}{187--206}.
\PrintBackRefs{\CurrentBib}

\bibitem [\protect \citeauthoryear {%
Hewitt%
, Ortmann%
\BCBL {}\ \BBA {} Rei%
}{%
Hewitt%
\ \protect \BOthers {.}}{%
{\protect \APACyear {2022}}%
}]{%
hewitt2021decision}
\APACinsertmetastar {%
hewitt2021decision}%
\begin{APACrefauthors}%
Hewitt, M.%
, Ortmann, J.%
\BCBL {}\ \BBA {} Rei, W.%
\end{APACrefauthors}%
\unskip\
\newblock
\APACrefYearMonthDay{2022}{}{}.
\newblock
{\BBOQ}\APACrefatitle {Decision-based scenario clustering for decision-making
  under uncertainty} {Decision-based scenario clustering for decision-making
  under uncertainty}.{\BBCQ}
\newblock
\APACjournalVolNumPages{Annals of Operations Research}{315}{}{747--771}.
\PrintBackRefs{\CurrentBib}

\bibitem [\protect \citeauthoryear {%
Kasperski%
, Kurpisz%
\BCBL {}\ \BBA {} Zieli{\'n}ski%
}{%
Kasperski%
\ \protect \BOthers {.}}{%
{\protect \APACyear {2013}}%
}]{%
kasperski2013approximating}
\APACinsertmetastar {%
kasperski2013approximating}%
\begin{APACrefauthors}%
Kasperski, A.%
, Kurpisz, A.%
\BCBL {}\ \BBA {} Zieli{\'n}ski, P.%
\end{APACrefauthors}%
\unskip\
\newblock
\APACrefYearMonthDay{2013}{}{}.
\newblock
{\BBOQ}\APACrefatitle {Approximating the min--max (regret) selecting items
  problem} {Approximating the min--max (regret) selecting items
  problem}.{\BBCQ}
\newblock
\APACjournalVolNumPages{Information Processing Letters}{113}{1-2}{23--29}.
\PrintBackRefs{\CurrentBib}

\bibitem [\protect \citeauthoryear {%
Kasperski%
\ \BBA {} Zieli{\'n}ski%
}{%
Kasperski%
\ \BBA {} Zieli{\'n}ski%
}{%
{\protect \APACyear {2016}}%
}]{%
kasperski2016robust}
\APACinsertmetastar {%
kasperski2016robust}%
\begin{APACrefauthors}%
Kasperski, A.%
\BCBT {}\ \BBA {} Zieli{\'n}ski, P.%
\end{APACrefauthors}%
\unskip\
\newblock
\APACrefYearMonthDay{2016}{}{}.
\newblock
{\BBOQ}\APACrefatitle {Robust discrete optimization under discrete and interval
  uncertainty: A survey} {Robust discrete optimization under discrete and
  interval uncertainty: A survey}.{\BBCQ}
\newblock
\BIn{} \APACrefbtitle {Robustness analysis in decision aiding, optimization,
  and analytics} {Robustness analysis in decision aiding, optimization, and
  analytics}\ (\BPGS\ 113--143).
\newblock
\APACaddressPublisher{}{Springer}.
\PrintBackRefs{\CurrentBib}

\bibitem [\protect \citeauthoryear {%
Kasperski%
\ \BBA {} Zieli{\'n}ski%
}{%
Kasperski%
\ \BBA {} Zieli{\'n}ski%
}{%
{\protect \APACyear {2017}}%
}]{%
kasperski2017robust}
\APACinsertmetastar {%
kasperski2017robust}%
\begin{APACrefauthors}%
Kasperski, A.%
\BCBT {}\ \BBA {} Zieli{\'n}ski, P.%
\end{APACrefauthors}%
\unskip\
\newblock
\APACrefYearMonthDay{2017}{}{}.
\newblock
{\BBOQ}\APACrefatitle {Robust recoverable and two-stage selection problems}
  {Robust recoverable and two-stage selection problems}.{\BBCQ}
\newblock
\APACjournalVolNumPages{Discrete Applied Mathematics}{233}{}{52--64}.
\PrintBackRefs{\CurrentBib}

\bibitem [\protect \citeauthoryear {%
Keutchayan%
, Ortmann%
\BCBL {}\ \BBA {} Rei%
}{%
Keutchayan%
\ \protect \BOthers {.}}{%
{\protect \APACyear {2021}}%
}]{%
keutchayan2021problem}
\APACinsertmetastar {%
keutchayan2021problem}%
\begin{APACrefauthors}%
Keutchayan, J.%
, Ortmann, J.%
\BCBL {}\ \BBA {} Rei, W.%
\end{APACrefauthors}%
\unskip\
\newblock
\APACrefYearMonthDay{2021}{}{}.
\newblock
{\BBOQ}\APACrefatitle {Problem-driven scenario clustering in stochastic
  optimization} {Problem-driven scenario clustering in stochastic
  optimization}.{\BBCQ}
\newblock
\APACjournalVolNumPages{arXiv preprint arXiv:2106.11717}{}{}{}.
\PrintBackRefs{\CurrentBib}

\bibitem [\protect \citeauthoryear {%
Powell%
}{%
Powell%
}{%
{\protect \APACyear {2019}}%
}]{%
powell2019unified}
\APACinsertmetastar {%
powell2019unified}%
\begin{APACrefauthors}%
Powell, W\BPBI B.%
\end{APACrefauthors}%
\unskip\
\newblock
\APACrefYearMonthDay{2019}{}{}.
\newblock
{\BBOQ}\APACrefatitle {A unified framework for stochastic optimization} {A
  unified framework for stochastic optimization}.{\BBCQ}
\newblock
\APACjournalVolNumPages{European Journal of Operational
  Research}{275}{3}{795--821}.
\PrintBackRefs{\CurrentBib}

\bibitem [\protect \citeauthoryear {%
Rahimian%
, Bayraksan%
\BCBL {}\ \BBA {} Homem-de Mello%
}{%
Rahimian%
\ \protect \BOthers {.}}{%
{\protect \APACyear {2019}}%
}]{%
rahimian2019identifying}
\APACinsertmetastar {%
rahimian2019identifying}%
\begin{APACrefauthors}%
Rahimian, H.%
, Bayraksan, G.%
\BCBL {}\ \BBA {} Homem-de Mello, T.%
\end{APACrefauthors}%
\unskip\
\newblock
\APACrefYearMonthDay{2019}{}{}.
\newblock
{\BBOQ}\APACrefatitle {Identifying effective scenarios in distributionally
  robust stochastic programs with total variation distance} {Identifying
  effective scenarios in distributionally robust stochastic programs with total
  variation distance}.{\BBCQ}
\newblock
\APACjournalVolNumPages{Mathematical Programming}{173}{1}{393--430}.
\PrintBackRefs{\CurrentBib}

\bibitem [\protect \citeauthoryear {%
Rujeerapaiboon%
, Schindler%
, Kuhn%
\BCBL {}\ \BBA {} Wiesemann%
}{%
Rujeerapaiboon%
\ \protect \BOthers {.}}{%
{\protect \APACyear {2018}}%
}]{%
rujeerapaiboon2018scenario}
\APACinsertmetastar {%
rujeerapaiboon2018scenario}%
\begin{APACrefauthors}%
Rujeerapaiboon, N.%
, Schindler, K.%
, Kuhn, D.%
\BCBL {}\ \BBA {} Wiesemann, W.%
\end{APACrefauthors}%
\unskip\
\newblock
\APACrefYearMonthDay{2018}{}{}.
\newblock
{\BBOQ}\APACrefatitle {Scenario reduction revisited: Fundamental limits and
  guarantees} {Scenario reduction revisited: Fundamental limits and
  guarantees}.{\BBCQ}
\newblock
\APACjournalVolNumPages{Mathematical Programming}{}{}{1--36}.
\PrintBackRefs{\CurrentBib}

\bibitem [\protect \citeauthoryear {%
Yan{\i}ko{\u{g}}lu%
, Gorissen%
\BCBL {}\ \BBA {} den Hertog%
}{%
Yan{\i}ko{\u{g}}lu%
\ \protect \BOthers {.}}{%
{\protect \APACyear {2019}}%
}]{%
yanikouglu2019survey}
\APACinsertmetastar {%
yanikouglu2019survey}%
\begin{APACrefauthors}%
Yan{\i}ko{\u{g}}lu, {\.I}.%
, Gorissen, B\BPBI L.%
\BCBL {}\ \BBA {} den Hertog, D.%
\end{APACrefauthors}%
\unskip\
\newblock
\APACrefYearMonthDay{2019}{}{}.
\newblock
{\BBOQ}\APACrefatitle {A survey of adjustable robust optimization} {A survey of
  adjustable robust optimization}.{\BBCQ}
\newblock
\APACjournalVolNumPages{European Journal of Operational
  Research}{277}{3}{799--813}.
\PrintBackRefs{\CurrentBib}

\end{thebibliography}
\end{document}